\theoremstyle{definition}
\newtheorem{theorem}{Theorem}[section]
\newtheorem{lemma}[theorem]{Lemma}
\newtheorem{fact}[theorem]{Fact} 
\newtheorem{observation}[theorem]{Observation} 
\theoremstyle{definition}
\newtheorem{definition}[theorem]{Definition}
\newtheorem{pclaim}[theorem]{Claim}
\newtheorem*{ac}{Acknowledgments} 
\theoremstyle{remark}
\newenvironment{rmenum}{
\begin{enumerate}

}
{\end{enumerate}}
\newcommand{\parNei}[2]{N_{#1}(#2)}
\newcommand{\tcomp}[2]{\mathcal{G}(#1, #2)}
\newcommand{\distf}[4]{\lambda(#3, #4; #2; #1)}
\newcommand{\distgt}[4]{\lambda(#3, #4; #1, #2)}
\newcommand{\distgtf}[5]{\lambda(#4, #5; #3; #1, #2)}
\newcommand{\parcut}[2]{\delta_{#1}(#2)}
\newcommand{\gtsim}[2]{\sim_{(#1, #2)}}
\newcommand{\tpart}[2]{\mathcal{P}(#1, #2)}
\newcommand{\gtpart}[2]{\mathcal{P}(#1, #2)}
\newcommand{\conn}[1]{\mathcal{C}(#1)}
\newcommand{\conng}[2]{\mathcal{C}_{#1}(#2)}
\newcommand{\connp}[1]{\mathcal{C}^{*}(#1)}
\newcommand{\laygtr}[4]{U_{ < #4}(#3; #1, #2)}
\newcommand{\layler}[2]{U_{ \le #1}(#2)}
\newcommand{\laylegtr}[4]{U_{ \le #4}(#3; #1, #2)}
\newcommand{\levelr}[2]{U_{#1}(#2)}
\newcommand{\levelgtr}[4]{U_{#4}(#3; #1, #2)}
\newcommand{\laycompgtr}[4]{\mathcal{L}_{#4}( #3; #1, #2)}
\newcommand{\laycompall}[3]{\mathcal{L}_{(#1, #2)}(#3)}
\newcommand{\agtr}[3]{A_{(#1, #2)}(#3)}
\newcommand{\apr}[2]{A_{#1}(#2)}
\newcommand{\dgtr}[3]{D_{(#1, #2)}(#3)}
\newcommand{\cgtr}[3]{C_{(#1, #2)}(#3)}
\newcommand{\extend}[5]{(#1, #2; #3) \otimes (#4, #5)}  
\newcommand{\initialgtr}[3]{K_{(#1, #2)}(#3)}
\newcommand{\coupgt}[3]{D^\circ_{(#1, #2)}(#3)} 
\newcommand{\coupgtf}[4]{D^\circ_{(#1, #2)}(#4; #3)} 
\newcommand{\neicompgt}[3]{\mathcal{B}_{(#1, #2)}(#3)}  
\newcommand{\neicompgtr}[4]{\mathcal{B}_{(#1, #2)}(#4; #3)}  
\newcommand{\neisetgt}[3]{B_{(#1, #2)}(#3)}  
\newcommand{\neisetgtr}[4]{B_{(#1, #2)}(#4;  #3)}
\newcommand{\setdistgtf}[5]{\lambda(#4, #5; #3; #1, #2)}
\title[Tight Cuts in Bipartite Grafts I]{Tight Cuts in Bipartite Grafts I: Capital Distance Components}
\author{Nanao Kita}
\address{Tokyo University of Science 2641 Yamazaki, Noda, Chiba, Japan 278-0022}
\email{kita@rs.tus.ac.jp}
\date{\today}
\begin{document}

\begin{abstract} 
This paper is the first from a series of papers that provide a characterization of maximum packings of $T$-cuts in bipartite graphs.    
Given a connected graph, a set $T$ of an even number of vertices, and a minimum $T$-join, an edge weighting can be defined, 
   from which distances between vertices can be defined.   
Furthermore, given a specified vertex called root, 
vertices can be classified  according to their distances from the root,  
and this classification of vertices can be used to define a family of subgraphs called {\em distance components}.  
Seb\"o provided a theorem that revealed a relationship between distance components, minimum $T$-joins, and $T$-cuts.  
In this paper, 
we further investigate the structure of distance components in bipartite graphs. 
Particularly, we focus on {\em capital} distance components, that is, those that include the root. 
We reveal the structure of capital distance components  in terms of 
the $T$-join analogue of the general Kotzig-Lov\'asz canonical decomposition. 
\end{abstract}

\maketitle

\section{Definitions}

Let $G$ be a graph. 
The vertex and edge sets of $G$ are denoted by $V(G)$ and $E(G)$, respectively. 
We consider multigraphs. 
For two distinct vertices $x, y \in V(G)$,  an edge between $x$ and $y$ can be denoted by $xy$. 
We denote the set of connected components of $G$ by $\conn{G}$. 
The set of connected components of $G$ with at least one edge is denoted by $\connp{G}$. 
We treat path and circuits as graphs. 
Given a path $P$ and two vertices $x, y\in V(P)$, 
$xPy$ denotes the subpath of $P$ between $x$ and $y$. 
We sometimes treat graphs as the set of its vertices.

Let $X \subseteq V(G)$. 
The subgraph of $G$ induced by $X$ is denoted as $G[X]$. 
The graph $G[V(G)\setminus X]$ can be denoted by $G -X$. 
The set $\conn{G[X]}$ can be denoted as $\conng{G}{X}$. 
A path $P$ with two distinct ends $x$ and $y$ is called a {\em round ear path relative to} $X$ if $V(P)\cap X = \{x, y\}$. 
These $x, y$ are called {\em bonds} of $P$.

Let $\hat{G}$ be a supergraph of $G$, and let $F\subseteq E(\hat{G})$. 
The sum $G + F$ denotes the graph obtained by adding $F$ to $G$.  
In contrast, if $F\subseteq E(G)$ holds, then $G. F$ denotes the subgraph of $G$ determined by $F$. 
Let $G_1$ and $G_2$ be two subgraphs of $\hat{G}$. 
We denote the sum of  $G_1$ and $G_2$ by $G_1 + G_2$.

A neighbor of $X$ is a vertex from $V(G)\setminus X$ that is adjacent to a vertex from $X$. 
The set of neighbors of $X$ is denoted by $\parNei{G}{X}$. 
The set of edges between $X$ and $V(G)\setminus X$ is denoted by $\parcut{G}{X}$. 
The set of ends of edges from $F$ is denoted by $\partial_G(F)$.

The set of integers is denoted by $\mathbb{Z}$. 
The symmetric difference of two sets $A$ and $B$ is denoted by $A\Delta B$. 
That is, $A \Delta B$ denotes $(A\setminus B) \cup (B \setminus A)$. 
We often denote a singleton $\{x\}$ simply by $x$.

\section{Grafts and Joins}

Let $G$ be a graph, and let $T\subseteq V(G)$. 
A set $F\subseteq E(G)$ is a {\em join} of the pair $(G, T)$ 
if $|\parcut{G}{v} \cap F|$ is odd for every $v\in T$ 
and even for every $v\in V(G)\setminus T$. 
A join is said to be {\em minimum}  if it has the minimum number of edges. 
Pair $(G, T)$ is called a {\em graft} if  $|K\cap V(T)|$ is even for every $K \in \conn{G}$.  
A pair $(G, T)$ of  graph $G$ and set $T\subseteq V(G)$ has a join 
if and only if it is a graft.  
For a graft $(G, T)$, 
the number of edges in a minimum join of $(G, T)$ is denoted by $\nu(G, T)$. 
We often treat items or properties of $G$ as those of $(G, T)$. 
A graft $(G, T)$ is said to be connected if $G$ is connected. 
A graft $(G, T)$ is {\em bipartite} 
if $G$ is bipartite. 
Color classes of $G$ are referred to as color classes of $(G, T)$.

Let $(G, T)$ be a graft. 
Let $H$ be a subgraph of $G$, and let $S \subseteq T$. 
The pair $(H, S)$ is called a subgraft of $(G, T)$ if $(H, S)$ is a graft.

Let $F\subseteq E(G)$ be a join of $(G, T)$, and let $X \subseteq V(G)$.  
Define $Y \subseteq X$ as follows: 
A vertex $v \in X$ is a member of $Y$  if $|T \cap \{v\}|$ and 
the number of edges from $F$ between $v$ and $V(G)\setminus X$ are of distinct parities. 
Then, the pair $(G[X], Y)$ is a subgraft of $(G, T)$ 
and is denoted by $(G, T)_F[X]$.

An edge is said to be {\em allowed} if  $(G, T)$ has a minimum join that contains this edge. 
A subgraph $H$ of $G$ is {\em factor-connected} if, 
for every two vertices $x, y\in V(H)$, $H$ has a path between $x$ and $y$ in which every edge is allowed. 
A maximal factor-connected subgraph of $G$ is called a {\em factor-component} of $(G, T)$. 
The set of factor-components of $(G, T)$ is denoted by $\tcomp{G}{T}$.  
Graft $(G, T)$ is said to be {\em factor-connected} if $G$ is factor-connected.

\section{Weights and Distances over Grafts}

Hereinafter in this paper, we assume that every graft is connected. 

\begin{definition} 
Let $(G, T)$ be a graft. Let $F \subseteq E(G)$. 
We define $w_F: E(G)\rightarrow \{1, -1\}$ as  
$w_F(e) = 1$ for $e\in E(G)\setminus F$ and $w_F(e) = -1$ for $e\in F$. 
For a subgraph $P$ of $G$, which is typically a path or circuit, 
 $w_F(P)$ denotes $ \Sigma_{ e \in E(P) } w_F(e)$, 
 and is referred to as the $F$-{\em weight} of $P$. 

For $u,v\in V(G)$, 
a path between $u$ and $v$ with the minimum $F$-weight is said to be {\em $F$-shortest} between $u$ and $v$.  
The $F$-weight of an $F$-shortest path between $u$ and $v$ is referred to as 
the $F$-{\em distance} between $u$ and $v$, 
and is denoted by $\distgtf{G}{T}{F}{u}{v}$. 
\end{definition}

\begin{fact}[see Seb\"o~\cite{DBLP:journals/jct/Sebo90}] \label{fact:dist} 
Let $G$ be a graft, and let $F \subseteq E(G)$ be a minimum join of $(G, T)$. 
Then, for every $x, y\in V(G)$, 
$\distgtf{G}{T}{F}{x}{y} = \nu(G, T) - \nu(G, T\Delta \{x, y\})$.  
\end{fact}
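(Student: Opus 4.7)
I would prove the equality via two matching inequalities, each leaning on the identity $|F \Delta A| = |F| + w_F(A)$ for any $A \subseteq E(G)$ and on parity checks for symmetric differences with joins. For the first direction, given any $x$-$y$-path $P$, a vertex-by-vertex parity count shows that $E(P)$ toggles the join-parity exactly at $x$ and $y$ (endpoints have degree $1$, internal vertices degree $2$), so $F \Delta E(P)$ is a join of $(G, T \Delta \{x, y\})$. Minimality of $\nu(G, T \Delta \{x, y\})$ then forces $|F| + w_F(P) \geq \nu(G, T \Delta \{x, y\})$, and minimizing over $P$ yields one of the two needed inequalities on $\distgtf{G}{T}{F}{x}{y}$.

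For the matching bound, I would produce a concrete path realizing the extremum. Fix a minimum join $F^*$ of $(G, T \Delta \{x, y\})$; the parity argument again shows $F \Delta F^*$ has odd degree precisely at $x$ and $y$, so within its $\{x,y\}$-containing component it admits an edge-disjoint decomposition into an $x$-$y$-path $P^*$ together with a family of circuits $C_1, \ldots, C_k$, obtained by taking an $x$-$y$ Euler trail of that component and shortcutting repeated vertices (and appending the Eulerian circuit decompositions of any other components of $F \Delta F^*$). A direct count from the definition of $w_F$ gives $w_F(F \Delta F^*) = |F^*| - |F|$, and edge-disjointness of the decomposition splits this as $w_F(P^*) + \sum_i w_F(C_i)$.

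The pivotal step is then the minimality of $F$ itself: if some circuit $C$ in $G$ satisfied $w_F(C) < 0$, then $F \Delta E(C)$ would be a $T$-join of size $|F| + w_F(C) < \nu(G, T)$, contradicting minimality of $F$. Applied to each $C_i$ this forces $\sum_i w_F(C_i) \geq 0$, so $w_F(P^*) \leq |F^*| - |F|$, giving the reverse inequality on $\distgtf{G}{T}{F}{x}{y}$. The main obstacle I anticipate is the path-plus-circuits decomposition of $F \Delta F^*$; this is a routine Euler-trail shortcutting argument, but some care is needed to package the discarded closed subtrails as honest edge-disjoint circuits whose $F$-weights can be individually controlled. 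Once that is in place, the two inequalities coincide and the formula follows.
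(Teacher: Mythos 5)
Your proof is correct, and since the paper states Fact~\ref{fact:dist} without proof (citing Seb\"o), the argument you give is essentially the standard one from the literature: the parity check showing $F\Delta E(P)$ is a join of $(G,T\Delta\{x,y\})$, the decomposition of $F\Delta F^*$ into a simple $x$--$y$ path and edge-disjoint circuits, and the nonnegativity of circuit weights under a minimum join (your ``pivotal step'', which is exactly Lemma~\ref{lem:minimumjoin}). The decomposition step you were worried about is routine: $F\Delta F^*$ has odd degree exactly at $x,y$, so those two vertices lie in the same component, any $x$--$y$ walk in it contains a simple $x$--$y$ path $P^*\subseteq F\Delta F^*$, and what remains has all degrees even and therefore partitions into edge-disjoint circuits.

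One point you should flag explicitly: your two inequalities combine to give
\[
\distgtf{G}{T}{F}{x}{y} \;=\; \nu(G, T\Delta\{x,y\}) - \nu(G,T),
\]
which is the \emph{negative} of the formula as printed in Fact~\ref{fact:dist}. A one-edge sanity check settles the sign: take $G=K_2$ on $\{x,y\}$, $T=\{x,y\}$, $F=\{xy\}$; then $\distgtf{G}{T}{F}{x}{y}=-1$ while $\nu(G,T)-\nu(G,T\Delta\{x,y\}) = 1-0 = 1$. Your version is the correct one and is also the one consistent with Lemma~\ref{lem:tnonpositive} (for factor-connected grafts the distance is nonpositive, i.e.\ $\nu(G,T\Delta\{x,y\})\le\nu(G,T)$); the formula as written in the paper appears to be a sign typo. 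So the proof is sound, but be explicit that the statement you are proving reads $\nu(G, T\Delta\{x,y\}) - \nu(G,T)$ rather than its negation.
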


Fact~\ref{fact:dist} implies that the $F$-distance between two vertices does not depend on the minimum join $F$. 
Hence, under Fact~\ref{fact:dist}, 
we sometimes denote $\distgtf{G}{T}{F}{x}{y}$ by $\distgt{G}{T}{x}{y}$. 
That is, 
for a graft $(G, T)$ and vertices $x, y\in V(G)$, 
$\distgt{G}{T}{x}{y}$ denotes $\distgtf{G}{T}{F}{x}{y}$, where $F$ is a minimum join of $(G, T)$.

\begin{definition} 
Let $(G, T)$ be a  graft, and let $r\in V(G)$. Let $F$ be a minimum join. 
We denote the set $\{ x\in V(G): \distgtf{G}{T}{F}{r}{x} = i \}$ by $\levelgtr{G}{T}{r}{i}$  
and the set $\{ x\in V(G): \distgtf{G}{T}{F}{r}{x} < i \}$ by $\laygtr{G}{T}{r}{i}$.  
We also denote the set $\levelgtr{G}{T}{r}{i} \cup \laygtr{G}{T}{r}{i}$ by $\laylegtr{G}{T}{r}{i}$.

For each $i \in \mathbb{Z}$, we denote $\conng{G}{\layler{i}{r}}$ by $\laycompgtr{G}{T}{r}{i}$. 
The set 
\begin{quote} 
$\bigcup \{ \laycompgtr{G}{T}{r}{i} : \min_{x\in V(G)} \distgtf{G}{T}{F}{r}{x} \le i \le \max_{x\in V(G)} \distgtf{G}{T}{F}{r}{x}  \}$
\end{quote}  is denoted by $\laycompall{G}{T}{r}$. 
The members of $\laycompall{G}{T}{r}$ are called the {\em distance components} of $(G, T)$ with respect to $r$. 
\end{definition}

\begin{definition} 
Let $(G, T)$ be a graft, and let $r\in V(G)$. 
Let $C \in \laycompall{G}{T}{r}$. 
We call distance component $C$ a {\em capital}  component if $r$ is a vertex of $C$. 
The capital component that is a member of $\laycompgtr{G}{T}{r}{0}$ 
is called the {\em initial}  component and is denoted by $\initialgtr{G}{T}{r}$.  
We denote the set $V(\initialgtr{G}{T}{r})\cap \levelr{0}{r}$ by $\agtr{G}{T}{r}$,  
the set $V(\initialgtr{G}{T}{r})\setminus \agtr{G}{T}{r}$ by $\dgtr{G}{T}{r}$, 
and the set $V(G)\setminus \agtr{G}{T}{r}\setminus \dgtr{G}{T}{r}$ by $\cgtr{G}{T}{r}$. 
\end{definition}

\section{Basic Properties on Minimum Joins}  

\begin{lemma}[see Seb\"o~\cite{DBLP:journals/jct/Sebo90}] \label{lem:circuit}  
Let $(G, T)$ be graft, and let $F$ be a minimum join. 
If $C$ is a circuit with $w_F(C) = 0$, then $F\Delta E(C)$ is also a minimum join of $(G, T)$. 
Accordingly, every edge of $C$ is allowed. 
\end{lemma}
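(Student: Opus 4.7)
The plan is to establish two things in turn: first, that $F\Delta E(C)$ is a join of $(G,T)$, and second, that it has the same cardinality as $F$, from which the statement about allowedness follows immediately.

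For the join property, I would fix any vertex $v\in V(G)$ and compare $|\parcut{G}{v}\cap (F\Delta E(C))|$ with $|\parcut{G}{v}\cap F|$. Since $C$ is a circuit, each vertex has even degree in $C$, i.e., $|\parcut{G}{v}\cap E(C)|$ is even. Because symmetric difference changes the count at $v$ by $|\parcut{G}{v}\cap E(C)|$ modulo~$2$ (the well-known identity $|A\Delta B|\equiv |A|+|B|\pmod 2$ applied to the edges incident with $v$), the parity of the degree of $v$ is preserved. Hence $|\parcut{G}{v}\cap (F\Delta E(C))|$ is odd exactly when $v\in T$, and so $F\Delta E(C)$ is a join.

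For minimality, I would unwrap the weight condition. The identity $|F\Delta E(C)|=|F|+|E(C)|-2|F\cap E(C)|$ holds in general. The hypothesis $w_F(C)=0$ means $|E(C)\setminus F|-|F\cap E(C)|=0$, i.e.\ $|E(C)|=2|F\cap E(C)|$. Substituting gives $|F\Delta E(C)|=|F|$. Since $F$ is a minimum join and $F\Delta E(C)$ is a join of the same size, it too is minimum.

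For the last sentence, any edge $e\in E(C)$ either belongs to $F$, in which case $F$ itself witnesses that $e$ is allowed, or belongs to $E(C)\setminus F$, in which case $e\in F\Delta E(C)$ and the minimum join produced above is the witness. I do not expect a genuine obstacle here; the entire argument is a parity-plus-counting check, and the only thing to be careful about is making sure to phrase the weight condition $w_F(C)=0$ correctly as the cardinality equation $|E(C)\cap F|=|E(C)\setminus F|$ before plugging it into the symmetric-difference formula.
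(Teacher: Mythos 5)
Your proof is correct. The paper states this lemma without proof, citing Seb\"o, and your argument is the standard one: the parity check at each vertex (using that every vertex has even degree in a circuit) establishes the join property, the counting identity $|F\Delta E(C)|=|F|+|E(C)|-2|F\cap E(C)|$ combined with $w_F(C)=|E(C)\setminus F|-|E(C)\cap F|=0$ gives equal cardinality, and the allowedness of every edge of $C$ follows since each edge lies in $F$ or in $F\Delta E(C)$.
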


\begin{lemma}[see Seb\"o~\cite{DBLP:journals/jct/Sebo90}] \label{lem:minimumjoin}  
Let $(G, T)$ be graft, and let $F\subseteq E(G)$. 
Then, $F$ is a minimum join of $(G, T)$ 
if and only if $w_F(C) \ge 0$ for every circuit $C$ of $G$. 
\end{lemma}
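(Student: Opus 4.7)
The plan is to prove both implications. For necessity, I would argue by contradiction: suppose $F$ is a minimum join but some circuit $C$ of $G$ has $w_F(C) < 0$. Because $E(C)$ meets each vertex in an even number of edges, the set $F \Delta E(C)$ remains a join of $(G, T)$, since the parity of $|\parcut{G}{v} \cap F|$ is preserved at every $v$. A direct count gives $|F \Delta E(C)| = |F| + w_F(C)$, because each edge of $C$ lying in $F$ contributes $-1$ and each edge of $C$ outside $F$ contributes $+1$ to the change in size. Thus $|F \Delta E(C)| < |F|$, contradicting the minimality of $F$.

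For sufficiency, suppose $w_F(C) \ge 0$ for every circuit $C$ of $G$, and let $F^{*}$ be a minimum join of $(G, T)$. Since $F$ and $F^{*}$ share the same parity of $|\parcut{G}{v} \cap \cdot|$ at every vertex (odd on $T$, even off $T$), the subgraph of $G$ determined by $F \Delta F^{*}$ has even degree everywhere, and hence decomposes into edge-disjoint circuits $C_1, \dots, C_k$ of $G$. On each $C_i$ the edges belonging to $F$ are precisely those in $F \setminus F^{*}$ and the edges outside $F$ are precisely those in $F^{*} \setminus F$. Summing the weights gives
\[
\sum_{i=1}^{k} w_F(C_i) \;=\; |F^{*} \setminus F| - |F \setminus F^{*}| \;=\; |F^{*}| - |F|,
\]
and applying the hypothesis to each $C_i$ yields $|F| \le |F^{*}|$, so $F$ is itself a minimum join.

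The only nontrivial ingredient is the decomposition of the even subgraph $F \Delta F^{*}$ into edge-disjoint circuits, a standard fact valid for multigraphs (where a pair of parallel edges is treated as a circuit). After that, the argument is bookkeeping, and I do not anticipate a serious obstacle: the lemma is essentially the recasting, in terms of circuit weights, of the classical exchange argument between $F$ and a minimum join $F^{*}$, with the necessity direction being a single-circuit instance of the same exchange.
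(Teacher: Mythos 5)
The paper does not prove this lemma at all: it is stated with the attribution ``see Seb\"o'' and used as a black box, so there is no in-paper proof to compare your argument against. Judged on its own, your proof is correct and is the standard exchange argument. The necessity direction is fine: $E(C)$ meets every vertex in an even number of edges, so $F\Delta E(C)$ is again a join, and the count $|F\Delta E(C)| = |F| + |E(C)| - 2|E(C)\cap F| = |F| + w_F(C)$ gives the contradiction. The sufficiency direction is also sound: $F\Delta F^*$ has even degree at every vertex, decomposes into edge-disjoint circuits (Veblen's theorem, valid for multigraphs with a parallel pair counted as a circuit), and your identity $\sum_i w_F(C_i) = |F^*| - |F|$ is exactly right.

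One caveat worth flagging. As literally written, the lemma says only ``let $F\subseteq E(G)$,'' and the ``if'' direction is false for an arbitrary subset: $F=\emptyset$ satisfies $w_F(C)=|E(C)|\ge 0$ for every circuit but is a join only when $T=\emptyset$. Your sufficiency argument silently assumes $F$ is a join at the point where you assert that $F$ and $F^*$ induce the same parity of $|\parcut{G}{v}\cap\,\cdot\,|$ at every vertex; without that assumption $F\Delta F^*$ need not be an even subgraph and the circuit decomposition fails. This is almost certainly an imprecision in the paper's statement rather than in your proof (the standard formulation hypothesizes that $F$ is a join and characterizes minimality), but you should state the assumption explicitly rather than let it enter unannounced.
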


\begin{lemma}[see Kita~\cite{kita2017parity}] \label{lem:tnonpositive} 
Let $(G, T)$ be a factor-connected graft, and let $F$ be a minimum join of $(G, T)$.  
Then, $\distgtf{G}{T}{F}{x}{y} \le 0$ holds for every $x, y\in V(G)$. 
\end{lemma}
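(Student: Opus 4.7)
The plan is to reduce the bound $\distgtf{G}{T}{F}{x}{y} \le 0$ to a per-edge estimate, and then string the estimates together along an allowed path. The case $x = y$ is trivial because the empty walk has $F$-weight $0$. For $x \ne y$ the core claim, and the main obstacle, is the following single-edge inequality: for every allowed edge $e = uv$ of $G$, one has $\distgtf{G}{T}{F}{u}{v} \le -1$.

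To settle this single-edge claim, first observe that if $e \in F$, then the one-edge path from $u$ to $v$ already has $F$-weight $-1$ and we are done. Otherwise, since $e$ is allowed, there is some minimum join $F'$ of $(G, T)$ containing $e$. Both $F$ and $F'$ obey the same parity condition at every vertex (odd on $T$, even off $T$), so $F \Delta F'$ has only even degrees and therefore decomposes into edge-disjoint simple circuits; let $C$ be the one containing $e$. Every edge of $C$ lies in exactly one of $F$ and $F'$, so a direct expansion gives $w_F(C) = -w_{F'}(C)$; invoking Lemma~\ref{lem:minimumjoin} once for $F$ and once for $F'$ then forces $w_F(C) = w_{F'}(C) = 0$. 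The path $C - e$ is consequently a $u$--$v$ path of $F$-weight $0 - 1 = -1$, witnessing the claim. This two-join symmetric-difference trick is the principal technical obstacle, because it is the only point in the argument where both extremality conditions must be used simultaneously.

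Finally, I would assemble the global bound by subadditivity. Since Lemma~\ref{lem:minimumjoin} gives $w_F(C) \ge 0$ on every circuit $C$ of $G$, any $x$--$y$ walk can be reduced to an $x$--$y$ path of no larger $F$-weight by peeling off circuits, so $\distgtf{G}{T}{F}{x}{y} \le \distgtf{G}{T}{F}{x}{z} + \distgtf{G}{T}{F}{z}{y}$ for every intermediate $z \in V(G)$. Factor-connectedness of $(G, T)$ now supplies an $x$--$y$ path $v_0 v_1 \cdots v_k$ consisting entirely of allowed edges; combining the per-edge bound of the previous paragraph with subadditivity along this path yields $\distgtf{G}{T}{F}{x}{y} \le -k \le 0$, completing the proof.
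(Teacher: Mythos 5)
Your first two paragraphs are correct: the symmetric-difference trick does give, for every allowed edge $uv$, a circuit $C$ through $uv$ with $w_F(C)=0$ and hence $\distgtf{G}{T}{F}{u}{v}\le -1$. The third paragraph, however, contains a genuine gap: the subadditivity inequality $\distgtf{G}{T}{F}{x}{y}\le\distgtf{G}{T}{F}{x}{z}+\distgtf{G}{T}{F}{z}{y}$ is \emph{false}, and your justification for it does not go through. When you concatenate two $F$-shortest paths, the resulting walk may traverse an edge of $F$ twice; the closed subwalk you would like to discard then uses that edge twice and is not a circuit of $G$, so Lemma~\ref{lem:minimumjoin} gives no lower bound on its $F$-weight (in fact such a back-and-forth contributes $-2$). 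So the ``walk reduces to a path of no larger $F$-weight'' step is not available.

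For a concrete counterexample, let $G$ be the $4$-cycle on $\{a,b,c,d\}$ with edges $ab,bc,cd,da$, let $T=\{a,b,c,d\}$, and let $F=\{ab,cd\}$. Then $\nu(G,T)=2$, every edge lies in one of the two minimum joins $\{ab,cd\}$ and $\{bc,ad\}$, so $(G,T)$ is factor-connected. One computes $\distgtf{G}{T}{F}{a}{b}=-1$ (the edge $ab$), $\distgtf{G}{T}{F}{b}{c}=-1$ (the path $b$-$a$-$d$-$c$), but $\distgtf{G}{T}{F}{a}{c}=0$ (both $a$-$b$-$c$ and $a$-$d$-$c$ have $F$-weight $0$). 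Thus $0=\distgtf{G}{T}{F}{a}{c}\not\le\distgtf{G}{T}{F}{a}{b}+\distgtf{G}{T}{F}{b}{c}=-2$, and the final bound you derive, $\distgtf{G}{T}{F}{x}{y}\le -k$ along the length-$k$ allowed path $a$-$b$-$c$, also fails since $0>-2$. The per-edge estimate therefore cannot simply be summed; the chaining step must be replaced by a different argument (for instance one that works with a single $0$-weight circuit through each allowed edge and tracks how these circuits overlap, rather than invoking a triangle inequality that this distance function does not satisfy).
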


\section{Seb\"o's Distance Decomposition Theorem}

\begin{theorem}[Seb\"o~\cite{DBLP:journals/jct/Sebo90}] \label{thm:neigh2dist} 
Let $(G, T)$ be a bipartite graft and  $F$ be a minimum join, and let $r\in V(G)$. 
If $x, y\in V(G)$ are adjacent in $G$, then $| \distgtf{G}{T}{F}{r}{u} - \distgtf{G}{T}{F}{r}{v}| = 1$. 
\end{theorem}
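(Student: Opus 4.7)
The plan is to bound $|\distgtf{G}{T}{F}{r}{u} - \distgtf{G}{T}{F}{r}{v}|$ from above by $1$ using the edge $e=uv$, then rule out equality via a parity argument based on bipartiteness. For brevity, I will write $\lambda(x) := \distgtf{G}{T}{F}{r}{x}$.

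For the upper bound, let $P$ be an $F$-shortest $r$-$u$ path, so $w_F(P)=\lambda(u)$. Concatenating $e$ with $P$ produces a walk $W$ from $r$ to $v$ of $F$-weight $\lambda(u)+w_F(e)$. If $v\notin V(P)$, then $W$ is itself a path, so $\lambda(v) \le \lambda(u)+w_F(e) \le \lambda(u)+1$. Otherwise $v\in V(P)$, and $W$ decomposes into the subpath $rPv$ and the circuit $C := vPu + e$. Since $F$ is a minimum join, Lemma~\ref{lem:minimumjoin} gives $w_F(C)\ge 0$, so
\[
w_F(rPv) \;=\; w_F(P) + w_F(e) - w_F(C) \;\le\; \lambda(u)+1,
\]
and hence $\lambda(v)\le \lambda(u)+1$. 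Swapping the roles of $u$ and $v$ yields $\lambda(u) \le \lambda(v)+1$, so $|\lambda(u)-\lambda(v)|\le 1$.

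For the lower bound, I invoke bipartiteness to show $\lambda(u)\neq \lambda(v)$. Because $G$ is bipartite, any two $r$-$x$ paths have the same number of edges modulo $2$; and since $w_F$ takes values in $\{-1,+1\}$, every $r$-$x$ path $Q$ satisfies $w_F(Q)\equiv |E(Q)| \pmod{2}$. Consequently $\lambda(u)$ and $\lambda(v)$ have the same parities as the bipartite graph-distances from $r$ to $u$ and from $r$ to $v$, respectively. Since $u$ and $v$ are adjacent in a bipartite graph they lie in different color classes, so these two graph-distances have opposite parities; therefore $\lambda(u)$ and $\lambda(v)$ do, and cannot be equal. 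Combined with the upper bound, this forces $|\lambda(u)-\lambda(v)|=1$.

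The only delicate step is the case $v\in V(P)$ in the upper-bound argument, where the concatenation $P+e$ fails to be a path; Lemma~\ref{lem:minimumjoin} is exactly what converts the walk-plus-circuit decomposition back into the desired path-weight inequality. Everything else is either immediate from the definitions or a direct parity observation in the bipartite graph.
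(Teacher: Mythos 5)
The paper does not prove Theorem~\ref{thm:neigh2dist}; it is quoted from Seb\"o's paper~\cite{DBLP:journals/jct/Sebo90} without proof, so there is no in-paper argument to compare against. Your reconstruction is correct and is the standard route: an edge gives the two-sided estimate $|\lambda(u)-\lambda(v)|\le 1$ via a shortest path plus $e$, and bipartiteness forbids equality by parity, since $w_F(Q)\equiv |E(Q)|\pmod 2$ for any path $Q$ and the two endpoints lie in opposite color classes.

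One small point worth tightening is the case split inside the upper bound. When $v\in V(P)$, you extract the ``circuit'' $C=vPu+e$ and apply Lemma~\ref{lem:minimumjoin}. This is fine when $e\notin E(vPu)$, but if the edge of $P$ incident with $u$ is exactly $e$ (which can happen, since nothing forces $e$ out of the shortest path), then $vPu+e$ is not a circuit and Lemma~\ref{lem:minimumjoin} does not apply to it. In that subcase the desired bound is immediate anyway: $rPv=P-e$ is already a path from $r$ to $v$, and $w_F(rPv)=\lambda(u)-w_F(e)\le \lambda(u)+1$ since $w_F(e)\ge -1$. So the conclusion is unaffected, but it is cleaner to state the case $v\in V(P)$ by observing directly that $rPv$ is a path of $F$-weight $w_F(P)-w_F(vPu)$, and that $w_F(vPu)\ge -w_F(e)$ either trivially (when $vPu$ is the edge $e$) or by Lemma~\ref{lem:minimumjoin} applied to the genuine circuit $vPu+e$ (otherwise). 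The parity half of your argument is exactly right and needs no adjustment.
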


\begin{theorem}[Seb\"o~\cite{DBLP:journals/jct/Sebo90}] \label{thm:sebocut} 
Let $(G, T)$ be a bipartite graft, and let $r\in V(G)$. Let $F$ be a minimum join. 
Let $K \in \laycompall{G}{T}{r}$. 
\begin{rmenum} 
\item If $r\not\in \parcut{G}{K}$ holds, then $|\parcut{G}{K}\cap F| =1$. 
\item If $r\in \parcut{G}{K}$ holds, then $\parcut{G}{K}\cap F = \emptyset$.  
\end{rmenum} 
\end{theorem}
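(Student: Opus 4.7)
\emph{Proof plan.} My approach uses Theorem~\ref{thm:neigh2dist} to characterize the structure of cut edges together with Lemma~\ref{lem:minimumjoin} (non-negativity of circuit $F$-weights) to rule out extra $F$-edges in the cut. I read the condition ``$r \in \parcut{G}{K}$'' as ``$r$ is a vertex of $K$'' (i.e., $K$ is capital), matching Seb\"o's classical formulation. As a preliminary step I would establish that if $K$ is a component of $G[\laylegtr{G}{T}{r}{i}]$ and $uv \in \parcut{G}{K}$ with $u \in V(K), v \notin V(K)$, then $\distgt{G}{T}{r}{u} = i$ and $\distgt{G}{T}{r}{v} = i + 1$. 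Indeed, $v \notin V(K)$ forces $\distgt{G}{T}{r}{v} > i$ (otherwise $v$ would lie in the same component as $u$), and Theorem~\ref{thm:neigh2dist} pins down both distances. In particular, every edge in $F \cap \parcut{G}{K}$ has $F$-weight $-1$, with its $V(K)$-endpoint at level $i$ and its outside endpoint at level $i+1$.

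For part~(ii), suppose for contradiction that $e = uv \in F \cap \parcut{G}{K}$ with $u \in V(K)$. Take an $F$-shortest $ru$-path $P$ of weight $i$; appending $e$ yields a walk to $v$ of $F$-weight $i-1$. If $v \notin V(P)$, this walk is already a simple path contradicting $\distgt{G}{T}{r}{v} = i+1$. Otherwise, writing $P = rP_1v + vP_2u$, the bound $w_F(rP_1v) \ge \distgt{G}{T}{r}{v} = i+1$ combined with $w_F(P) = i$ gives $w_F(vP_2u) \le -1$, hence the closed walk $vP_2u + e$ at $v$ has $F$-weight $\le -2$. Provided $e \notin E(vP_2u)$, this is a genuine circuit of negative $F$-weight, contradicting Lemma~\ref{lem:minimumjoin}. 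The degenerate sub-case where $P$ ends with the edge $e$ itself would need to be excluded separately; I would handle it by using Lemma~\ref{lem:circuit} and the connectivity of $V(K)$ inside $\laylegtr{G}{T}{r}{i}$ to reroute $P$ so that its final edge is not $e$.

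For part~(i), the lower bound $|F \cap \parcut{G}{K}| \ge 1$ follows by picking any $x \in V(K)$ and any $F$-shortest $rx$-path $Q$: since $r \notin V(K)$ and $x \in V(K)$, $Q$ must cross $\parcut{G}{K}$, and at its first crossing into $V(K)$ the $F$-distance drops from $i+1$ to $i$, forcing the crossing edge to have $F$-weight $-1$ and hence to lie in $F$. For the upper bound $|F \cap \parcut{G}{K}| \le 1$, suppose distinct $e_1 = a_1b_1, e_2 = a_2b_2 \in F \cap \parcut{G}{K}$ with $a_j \notin V(K), b_j \in V(K)$. I would combine $F$-shortest $ra_1$- and $ra_2$-paths (each of weight $i+1$), an $F$-shortest $b_1b_2$-path $R$ taken inside $V(K)$, and the two edges $e_1, e_2$, to form a closed walk at $r$, and then extract from it a simple circuit of strictly negative $F$-weight, again contradicting Lemma~\ref{lem:minimumjoin}.

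The main obstacle I anticipate is ensuring that the closed walks constructed in both parts reduce to \emph{genuine} simple circuits, and that their $F$-weights combine to be strictly negative. In the non-capital upper bound, this requires choosing the $ra_j$-paths to avoid $V(K)$ (so that the two sides of the cut are cleanly separated in the constructed cycle), and controlling the $F$-weight of the interior path $R$; both may require auxiliary arguments, potentially applying Lemma~\ref{lem:circuit} along zero-weight circuits to rearrange $F$-shortest paths until they meet these structural requirements. The degenerate sub-case in part~(ii) is analogous in spirit and likely requires the same rearrangement toolkit.
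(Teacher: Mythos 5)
First, note that the paper does not prove Theorem~\ref{thm:sebocut} at all: it is imported from Seb\"o's 1990 paper as a black box, so there is no in-paper proof to compare against. Your reading of the (mistyped) hypotheses as $r\notin V(K)$ versus $r\in V(K)$ is the right one, your preliminary step (the two endpoints of a cut edge of $K\in\laycompgtr{G}{T}{r}{i}$ lie at levels $i$ and $i+1$) is correct, and your lower bound in part~(i) is fine modulo the standard prefix-optimality of $F$-shortest paths. But the two substantive assertions --- no $F$-edge in the cut of a capital component, and at most one in the cut of a non-capital one --- are exactly where your sketch breaks, and the obstacles you flag are not repairable by the means you propose.

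For part~(ii): your Cases ``$v\notin V(P)$'' and ``$v\in V(P)$, $e\notin E(P)$'' each yield a contradiction, but this only proves that \emph{every} $F$-shortest $ru$-path ends with the edge $e=vu$. So the ``degenerate sub-case'' is not degenerate --- it is the only case --- and your proposed fix of rerouting $P$ so that its last edge is not $e$ is provably impossible. Worse, nothing in your part~(ii) argument uses the hypothesis $r\in V(K)$: the same reasoning applies verbatim to a non-capital $K$, where part~(i) guarantees such an edge $e$ exists. Any correct proof of (ii) must therefore exploit $r\in V(K)$ (e.g.\ that $r$ and $u$ are joined inside $G[\laylegtr{G}{T}{r}{i}]$) in a way your argument never does. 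For the upper bound in part~(i), compute the $F$-weight of your closed walk: $w_F(Q_1)+w_F(e_1)+w_F(R)+w_F(e_2)+w_F(Q_2)=(i{+}1)-1+w_F(R)-1+(i{+}1)=2i+w_F(R)$, and $w_F(R)\ge 0$ is forced (Lemma~\ref{lem:inext}), so the walk is \emph{non-negative} whenever $i\ge 0$ --- and non-capital components at levels $i\ge 0$ do occur. Even when $2i+w_F(R)<0$, extracting a simple negative circuit is not automatic, since passing to the symmetric difference removes doubled edges and removing doubled $F$-edges \emph{increases} the weight. So both halves of the theorem with real content remain unproved; this result is genuinely deep, and its proof in Seb\"o's paper requires machinery (induction along the level structure) that your local circuit arguments do not supply.
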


\begin{theorem}[Seb\"o~\cite{DBLP:journals/jct/Sebo90}] \label{thm:path2cut} 
Let $(G, T)$ be a bipartite graft, and let $r\in V(G)$. Let $F$ be a minimum join. 
Let $K \in \laycompall{G}{T}{r}$. 
Let $x\in V(G)$, and let $P$ be an $F$-shortest path between $r$ and $x$ in $(G, T)$. 
\begin{rmenum} 
\item If $r, x\in V(K)$ holds, then $E(P)\subseteq E(K)$ holds. 
\item If $r\in V(K)$ and $x\not\in V(K)$ hold, then $|E(P)\cap \parcut{G}{K}| = |E(P)\cap \parcut{G}{K} \cap F| =1$. 
\item If $r\not\in V(K)$ and $x\not\in V(K)$ hold, then either $E(P)\cap \parcut{G}{K} = \emptyset$ holds, or 
 $|E(P)\cap \parcut{G}{K} \cap F| = 1$ and $|  E(P)\cap \parcut{G}{K}  \setminus F| = 1$ hold. 
\item If $r\not\in V(K)$ and $x\in V(K)$ hold, then $|E(P)\cap \parcut{G}{K}| = |E(P)\cap \parcut{G}{K} \cap F| =1$. 
\end{rmenum} 
\end{theorem}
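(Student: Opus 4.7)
The plan is to trace the $F$-shortest path $P = v_0, v_1, \ldots, v_k$ from $r$ to $x$, classify each of its edges as internal to $V(K)$, external, or crossing $\parcut{G}{K}$, and count the crossings using Theorem~\ref{thm:sebocut}. The argument rests on two preliminary observations. First, every prefix $rPv_j$ is itself $F$-shortest, so $w_F(rPv_j) = \distgtf{G}{T}{F}{r}{v_j}$; this follows from the triangle inequality $\distgtf{G}{T}{F}{r}{x} \le \distgtf{G}{T}{F}{r}{v_j} + \distgtf{G}{T}{F}{v_j}{x}$, proved by concatenating $F$-shortest paths through $v_j$ and decomposing the resulting closed walk into circuits, each of nonnegative $w_F$ by Lemma~\ref{lem:minimumjoin}. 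Consequently, for each edge $v_{j-1}v_j$ on $P$, $w_F(v_{j-1}v_j) = \distgtf{G}{T}{F}{r}{v_j} - \distgtf{G}{T}{F}{r}{v_{j-1}} \in \{-1,+1\}$; the edge lies in $F$ exactly when the distance drops by one across it. Second, let $i$ be the level of $K$, so $V(K) \subseteq \layler{i}{r}$ and $K$ is a connected component of $G[\layler{i}{r}]$; any cut edge $uv \in \parcut{G}{K}$ with $u \in V(K)$ satisfies $\distgtf{G}{T}{F}{r}{u} = i$ and $\distgtf{G}{T}{F}{r}{v} = i+1$, because Theorem~\ref{thm:neigh2dist} gives $|\distgtf{G}{T}{F}{r}{u} - \distgtf{G}{T}{F}{r}{v}| = 1$, and if $\distgtf{G}{T}{F}{r}{v} \le i$ then $v \in \layler{i}{r}$ and adjacency via $uv$ would place $v$ in the same component of $G[\layler{i}{r}]$ as $u$, contradicting $v \notin V(K)$.

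Combining these, each edge of $E(P) \cap \parcut{G}{K}$ is either an \emph{exit} (the $V(K)$-endpoint precedes the outside endpoint, so the distance rises across the edge and the edge lies outside $F$) or an \emph{entry} (vice versa, the distance drops and the edge lies in $F$). Theorem~\ref{thm:sebocut} bounds the $F$-content of the cut: $\parcut{G}{K} \cap F = \emptyset$ when $r \in V(K)$, and $|\parcut{G}{K} \cap F| = 1$ when $r \notin V(K)$; hence $P$ admits no entry in the first case and at most one in the second. Combined with the sides of $V(K)$ on which $P$ starts and ends, this forces the crossing pattern in each case: (i) both endpoints inside rules out any crossing, for an exit could only be balanced by a forbidden entry, giving $E(P) \subseteq E(K)$; (ii) inside-to-outside with no entries available forces exactly one exit; (iv) outside-to-inside forces exactly one entry, which must be the unique $F$-cut-edge; (iii) outside-to-outside leaves either no crossings, or a single entry paired with a single exit.

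The structurally important step is the second preliminary observation---the level-jump property of cut edges---which is where the bipartite distance-change theorem interacts with the induced-subgraph definition of a distance component. The remainder is bookkeeping along a path whose consecutive distance labels change by $\pm 1$.
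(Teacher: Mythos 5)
The paper does not prove Theorem~\ref{thm:path2cut} --- it is cited from Seb\"o~\cite{DBLP:journals/jct/Sebo90} --- so there is no in-paper proof to compare against; I evaluate your argument on its own terms, and it has a genuine gap.

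Your first preliminary observation is false, and the whole argument rests on it. You claim that every prefix $rPv_j$ of an $F$-shortest path $P$ is itself $F$-shortest, hence that $w_F(v_{j-1}v_j)=\distgtf{G}{T}{F}{r}{v_j}-\distgtf{G}{T}{F}{r}{v_{j-1}}$ for every edge of $P$. Take the bipartite graft whose graph is the $4$-cycle on $r,a,u,v$ (edges $ra,au,uv,vr$), with $T=\{u,v\}$ and minimum join $F=\{uv\}$. The only circuit has $w_F$-weight $2$, so $F$ is indeed minimum. One computes $\distgtf{G}{T}{F}{r}{a}=1$, $\distgtf{G}{T}{F}{r}{u}=0$ (via $r$--$v$--$u$), $\distgtf{G}{T}{F}{r}{v}=1$, and $\distgtf{G}{T}{F}{u}{v}=-1$. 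The path $P=r$--$a$--$u$--$v$ has $w_F(P)=1+1-1=1=\distgtf{G}{T}{F}{r}{v}$, so $P$ is $F$-shortest; yet its prefix $r$--$a$--$u$ has $F$-weight $2\ne 0=\distgtf{G}{T}{F}{r}{u}$. On the edge $au$ of $P$, $w_F=+1$ while the distance label drops by $1$; on $uv$, $w_F=-1$ while the distance label rises by $1$. The triangle inequality you invoke also fails here: $\distgtf{G}{T}{F}{r}{v}=1 > -1 = \distgtf{G}{T}{F}{r}{u}+\distgtf{G}{T}{F}{u}{v}$. Your sketched derivation of it does not work because the concatenation of the two shortest paths ($r$--$v$--$u$ and $u$--$v$) is neither a path nor a closed walk; it traverses the negative edge $uv$ twice, and extracting a simple path must discard that negative contribution. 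The underlying reason is that for undirected $\pm 1$-conservative weightings the function $v\mapsto\distgtf{G}{T}{F}{r}{v}$ is \emph{not} a feasible Bellman--Ford potential, precisely because backtracking over an $F$-edge is a negative closed walk that is not a circuit.

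With Observation~1 gone, the entry/exit classification collapses. In the example above, the component $K=\{u\}$ (at level $0$) has $\parcut{G}{K}=\{au,uv\}$; the path $P$ \emph{enters} $K$ along $au\notin F$ and \emph{exits} along $uv\in F$, which is exactly opposite to your claim that entries lie in $F$ and exits outside $F$. The subsequent appeal to Theorem~\ref{thm:sebocut} to bound the number of entries therefore carries no force, and the case analysis does not go through. (For this $P$ the conclusion of case~(iii) happens to hold --- one crossing in $F$ and one outside --- but your argument identifies the wrong edge as the $F$-edge, so the reasoning does not generalize; a correct proof needs a genuine structural input such as the round-ear-path lemmas developed in this paper, not distance-label bookkeeping along $P$.) Finally, note that even under your false premise, your derivation in case~(ii) produces a crossing edge that is \emph{not} in $F$, which does not match the claimed equality $|E(P)\cap\parcut{G}{K}\cap F|=1$; a discrepancy of this kind in your own work should have prompted a re-examination of the argument.
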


\begin{definition} 
Let $(G, T)$ be a graft, and let $r\in V(G)$. 
We say that $(G, T)$ is {\em primal with respect to} $r$ 
if $\distgt{G}{T}{r}{x} \le 0$ for every $x\in V(G)$. 
\end{definition}

\begin{theorem} \label{thm:seboprimal} 
Let $(G, T)$ be a bipartite graft, let $F$ be a minimum join of $(G, T)$, and let $r\in V(G)$.  
Let $K \in \laycompall{G}{T}{r}$ with $r \not\in V(K)$, 
and let $r_K \in \partial_G(F)\cap V(K)$. 
Then, $F\cap E(K)$ is a minimum join of $(G, T)_F[K]$, 
$\distgtf{G}{T}{F}{r}{x} = \distgtf{G}{T}{F}{r}{r_K} + \distf{ (G, T)_F[K]}{ F\cap E(K)}{r_K}{x}$  
for every $x\in V(K)$, 
and 
$(G, T)_F[K]$ is a primal graft with respect to $r_K$. 
\end{theorem}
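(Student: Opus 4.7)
The plan is to establish the three claims in order, using Theorems~\ref{thm:sebocut} and~\ref{thm:path2cut} as the main structural inputs, together with Theorem~\ref{thm:neigh2dist}. Throughout, write $F_K := F\cap E(K)$ and $H := (G,T)_F[K]$. By Theorem~\ref{thm:sebocut}(i), since $r\notin V(K)$, the cut $\parcut{G}{K}$ contains exactly one edge of $F$; call it $e^*$ and let $v^*\in V(K)$ denote its end in $V(K)$. Since $v^*\in \partial_G(F)\cap V(K)$, it is the natural choice of $r_K$, and I will verify the conclusions with $r_K := v^*$.

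For the first claim that $F_K$ is a minimum join of $H$, observe that for each $v\in V(K)$ one has $|\delta_G(v)\cap F| = |\delta_{G[K]}(v)\cap F_K| + |\delta_G(v)\cap F\cap \parcut{G}{K}|$, so the fact that $F$ is a join of $(G,T)$ together with the defining parity condition of $T_K$ immediately yields that $F_K$ is a join of $H$. Minimality follows from Lemma~\ref{lem:minimumjoin}: every circuit $C$ of $G[K]$ is also a circuit of $G$ whose $F_K$-weight coincides with its $F$-weight, hence is nonnegative by Lemma~\ref{lem:minimumjoin} applied to $F$ on $G$.

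For the distance identity, fix $x\in V(K)$ and let $P$ be an $F$-shortest path from $r$ to $x$ in $G$. By Theorem~\ref{thm:path2cut}(iv) we have $|E(P)\cap\parcut{G}{K}\cap F| = |E(P)\cap\parcut{G}{K}| = 1$, so $P$ crosses $\parcut{G}{K}$ exactly once and this crossing must be via $e^*$; thus $P$ decomposes as $P_1 + e^* + P_2$ with $P_1$ a path from $r$ to $u$ lying in $G - V(K)$ (where $u$ is the end of $e^*$ outside $K$) and $P_2$ a path from $v^*$ to $x$ lying in $G[K]$. Optimality of $P$ forces $P_1 + e^*$ to be $F$-shortest from $r$ to $v^*$ and $P_2$ to be $F_K$-shortest from $v^*$ to $x$ in $H$, so that summing $F$-weights gives $\distgtf{G}{T}{F}{r}{x} \ge \distgtf{G}{T}{F}{r}{v^*} + \distf{H}{F_K}{v^*}{x}$. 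The reverse inequality is obtained by concatenating an $F$-shortest path from $r$ to $v^*$ in $G$ (which, by Theorem~\ref{thm:path2cut}(ii), crosses $\parcut{G}{K}$ only at $e^*$) with an $F_K$-shortest path from $v^*$ to $x$ in $H$, yielding the identity.

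Finally, for primality I would argue that $v^*$ realizes the maximum of $\distgt{G}{T}{r}{\cdot}$ over $V(K)$. Set $i^* := \max_{v\in V(K)}\distgt{G}{T}{r}{v}$; because $K$ is a distance component, it is a connected component of $G[\layler{i^*}{r}]$, so any neighbor of $V(K)$ outside $V(K)$ has $F$-distance from $r$ strictly exceeding $i^*$. Applying Theorem~\ref{thm:neigh2dist} to the edge $e^* = uv^*$ with $u\notin V(K)$ then forces $\distgt{G}{T}{r}{v^*} = i^*$ and $\distgt{G}{T}{r}{u} = i^* + 1$. Substituting into the identity, $\distf{H}{F_K}{v^*}{x} = \distgt{G}{T}{r}{x} - i^* \le 0$ for every $x\in V(K)$, which is precisely primality. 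I expect this final identification of $v^*$ as a maximum-level vertex to be the main technical hurdle, since it requires careful unpacking of the distance component definition combined with Theorem~\ref{thm:neigh2dist}.
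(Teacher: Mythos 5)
The paper states Theorem~\ref{thm:seboprimal} without supplying a proof (the theorem sits in the section devoted to Seb\"o's distance decomposition results and is evidently imported), so there is no in-paper argument to compare yours against; I will assess the proof on its own merits.

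Your argument is correct and well-structured: the parity calculation and the circuit argument via Lemma~\ref{lem:minimumjoin} establish the join claim; the decomposition $P = P_1 + e^* + P_2$ of an $F$-shortest $r$--$x$ path through Theorem~\ref{thm:path2cut} gives one direction of the distance identity and a concatenation gives the other; and the identification of $r_K$ as a vertex realizing $\max_{v\in V(K)}\distgtf{G}{T}{F}{r}{v}$, using Theorem~\ref{thm:neigh2dist} on the edge $e^*$, yields primality. Two small points deserve attention. First, in the reverse-inequality step you cite Theorem~\ref{thm:path2cut}(ii), but with $r\notin V(K)$ and $r_K\in V(K)$ the applicable case is (iv); and it helps to spell out why the concatenation is a path rather than merely a walk: the unique $F$-edge of $\parcut{G}{K}$ is $e^*$ and has $r_K$ as its $K$-side endpoint, so an $F$-shortest $r$--$r_K$ path enters $K$ only at its final step and is therefore internally disjoint from any $r_K$--$x$ path inside $G[K]$. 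Second, the claim that optimality of $P$ ``forces'' $P_1+e^*$ and $P_2$ each to be shortest is more than you need for the $\ge$ direction: since $P_1+e^*$ is \emph{some} $r$--$r_K$ path in $G$ and $P_2$ is \emph{some} $r_K$--$x$ path in $G[K]$, one immediately has $w_F(P) = w_F(P_1 + e^*) + w_F(P_2) \ge \distgtf{G}{T}{F}{r}{r_K} + \distf{(G, T)_F[K]}{F\cap E(K)}{r_K}{x}$. Neither issue affects the validity of the conclusion.
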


\section{General Kotzig-Lov\'asz Decomposition for Grafts}

\begin{definition} 
Let $(G, T)$ be a graft. 
For $u, v\in V(G)$, 
we say $u\gtsim{G}{T} v$ if $u$ and $v$ are contained in the same factor-component, and $\nu(G, T) - \nu(G, T\Delta \{u, v\}) = 0$. 
\end{definition}

\begin{theorem}[Kita~\cite{kita2017parity}] \label{thm:tkl}
If $(G, T)$ is a graft, then $\gtsim{G}{T}$ is an equivalence relation over $V(G)$. 
\end{theorem}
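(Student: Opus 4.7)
The relation $\gtsim{G}{T}$ has two components: being in the same factor-component, and the distance condition $\nu(G,T)=\nu(G,T\Delta\{u,v\})$. Both parts are trivially reflexive (using $T\Delta\{u,u\}=T$) and symmetric, so the whole burden rests on transitivity.

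For transitivity, suppose $u\gtsim{G}{T}v$ and $v\gtsim{G}{T}w$. The factor-component condition is transitive, so the remaining task is to show $\nu(G,T)=\nu(G,T\Delta\{u,w\})$. The plan is to fix a minimum join $F$ of $(G,T)$; by Fact~\ref{fact:dist} together with the hypotheses, there are $F$-shortest paths $P$ from $u$ to $v$ and $Q$ from $v$ to $w$ with $w_F(P)=w_F(Q)=0$. A parity check at $u,v,w$ shows that
\[
F^{\ast}\;:=\;F\,\Delta\,E(P)\,\Delta\,E(Q)
\]
is a join of $(G,T\Delta\{u,w\})$, and a direct computation gives
\[
|F^{\ast}|\;=\;|F|\;-\;2w_F\bigl(E(P)\cap E(Q)\bigr).
\]
In the case $E(P)\cap E(Q)=\emptyset$, $|F^{\ast}|=|F|=\nu(G,T)$, which furnishes a join of $(G,T\Delta\{u,w\})$ of size $\nu(G,T)$ and hence $\nu(G,T\Delta\{u,w\})\le\nu(G,T)$; the reverse inequality would follow by restricting to the common factor-component of $u,v,w$ (a factor-connected subgraft) and invoking Lemma~\ref{lem:tnonpositive} there.

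The key technical step is therefore to arrange that $P$ and $Q$ are edge-disjoint. I would carry this out by an exchange argument: given a shared edge $e\in E(P)\cap E(Q)$, split $P$ and $Q$ at $e$ and recombine the halves into a new pair of $u$-$v$ and $v$-$w$ walks. The change from $(P,Q)$ to the recombination is supported on a closed edge set, which decomposes into edge-disjoint circuits of $G$; by Lemma~\ref{lem:minimumjoin} each such circuit has nonnegative $F$-weight, and those of zero $F$-weight can be toggled against $F$ without destroying minimality by Lemma~\ref{lem:circuit}. Shortcutting the walks into simple paths and toggling off the neutral circuits should produce new $F$-shortest paths $P',Q'$, both still of weight zero, with $|E(P')\cap E(Q')|<|E(P)\cap E(Q)|$. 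Induction on $|E(P)\cap E(Q)|$ would then complete the reduction to the edge-disjoint case.

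The main obstacle is precisely this exchange step. The two paths can traverse $e$ in the same or in opposite orientations, yielding qualitatively different recombinations, and a naive splice generally produces walks that are not simple paths; one must argue that they can always be shortcut into simple paths whose $F$-weight is still zero, while strictly reducing the edge-intersection. Carrying this bookkeeping out—orientation by orientation and keeping track of which circuits are neutral and which strictly positive—is the combinatorial heart of the proof.
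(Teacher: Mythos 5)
The paper does not prove Theorem~\ref{thm:tkl}; it is cited from~\cite{kita2017parity}, so there is no in-paper argument for you to compare against. Your reflexivity and symmetry observations are fine, and the factor-component half of transitivity is immediate. The distance half is where the content lies, and you have correctly identified the obstruction, but your plan for resolving it does not close. The quantity you must control is $w_F\bigl(E(P)\cap E(Q)\bigr)$; if it is negative then $|F^{\ast}|>|F|$ and the construction says nothing. Subpaths of an $F$-shortest path are \emph{not} in general $F$-shortest between their own endpoints (shortcutting a walk to a path is not weight-monotone when weights take values in $\{\pm 1\}$), so there is no a priori sign control on the shared edges, and the proposed splice-and-induct reduction is precisely what would have to supply it. As written this is a real gap, not bookkeeping.

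There is also a direction problem in the ``reverse inequality'' step. You want $\nu(G,T\Delta\{u,w\})\ge\nu(G,T)$ and propose to get it from Lemma~\ref{lem:tnonpositive}. But Lemma~\ref{lem:tnonpositive} asserts that shortest-path $F$-weights are nonpositive; the correct translation to $\nu$ is $\distgt{G}{T}{u}{w}=\nu(G,T\Delta\{u,w\})-\nu(G,T)$ (the sign in the paper's statement of Fact~\ref{fact:dist} is a typo, as one checks on a single edge $uv$ with $T=\{u,v\}$), so Lemma~\ref{lem:tnonpositive} gives $\nu(G,T\Delta\{u,w\})\le\nu(G,T)$ --- the \emph{same} direction as your $F^{\ast}$ construction. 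Thus both halves of your argument produce the easy bound $\distgt{G}{T}{u}{w}\le 0$ and neither produces $\distgt{G}{T}{u}{w}\ge 0$. The latter is in effect a triangle inequality $\distgt{G}{T}{v}{w}\le\distgt{G}{T}{v}{u}+\distgt{G}{T}{u}{w}$, and that inequality is false for $F$-distances in general (it already fails on the $4$-cycle $a\,b\,c\,d$ with $T=\{a,b,c,d\}$ and $F=\{ab,cd\}$: $\distgt{G}{T}{a}{c}=0$ while $\distgt{G}{T}{a}{d}+\distgt{G}{T}{d}{c}=-2$), so the factor-connectedness hypothesis must enter the lower bound in an essential structural way that the proposal does not supply.
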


Under Theorem~\ref{thm:tkl}, 
we denote by $\tpart{G}{T}$ the family of equivalence classes of $\gtsim{G}{T}$. 
This family is called the {\em general Kotzig-Lov\'asz decomposition} of a graft.

\section{Extremality in Distance Components}

In this section, 
we provide and prove Lemmas~\ref{lem:inext} and \ref{lem:noear2gext} and then derive Lemma~\ref{lem:exext}. 
Lemma~\ref{lem:exext} then immediately implies Lemma~\ref{lem:ear2posi}.

\begin{definition} 
Let $(G, T)$ be a graft, and let $F$ be a minimum join. 
We say that a set $X\subseteq V(G)$ is {\em extreme} if 
$\distgtf{G}{T}{F}{x}{y} \ge 0$ for every $x, y\in V(G)$. 
\end{definition}

\begin{lemma}\label{lem:inext} 
Let $(G, T)$ be a bipartite graft, let $r\in V(G)$, and let $F$ be a minimum join of $(G, T)$. 
Let $K \in \laycompall{G}{T}{r}$, and let $i$ be the index with $K \in \laycompgtr{G}{T}{r}{i}$. 
Then, $V(K)\cap \levelgtr{G}{T}{r}{i}$ is an extreme set in the graft $(G, T)_F[K]$. 
\end{lemma}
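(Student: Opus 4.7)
My plan is to argue by contradiction. Assuming a path $Q$ in $G[V(K)]$ between two vertices $x, y \in V(K) \cap \levelgtr{G}{T}{r}{i}$ with $w_F(Q) < 0$, I combine $Q$ with an $F$-shortest $r$-$x$ path in $(G, T)$ to build a join of an auxiliary graft whose cardinality conflicts with Fact~\ref{fact:dist}, forcing a contradiction to the minimality of $F$.

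First, I apply Theorem~\ref{thm:seboprimal} (handling the capital case $r \in V(K)$ via Theorems~\ref{thm:sebocut} and~\ref{thm:path2cut}) to identify $F \cap E(K)$ as a minimum join of $(G, T)_F[K]$ and to secure an $F$-shortest $r$-$x$ path $P_x$ in $(G, T)$ that lies in $V(K)$ except for at most a single $F$-edge of $\parcut{G}{K}$. In particular, $w_F(P_x) = \distgtf{G}{T}{F}{r}{x} = i$, and $F$-weights of paths inside $G[V(K)]$ agree with the $(F \cap E(K))$-weights measured in the restricted graft, so the hypothesized $Q$ has $F$-weight $<0$ both inside and outside the restriction.

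Next, I examine $F^{*} := F \,\Delta\, E(P_x) \,\Delta\, E(Q)$. Tracking parities at each vertex, $F \,\Delta\, E(P_x)$ is a join of $(G, T \,\Delta\, \{r, x\})$, and then $F^{*}$ is a join of $(G, T \,\Delta\, \{r, y\})$. A direct computation gives
\begin{equation*}
|F^{*}| = |F| + w_F(P_x) + w_F(Q) - 2\, w_F\bigl(E(P_x) \cap E(Q)\bigr).
\end{equation*}
Applying Fact~\ref{fact:dist} to $(G, T)$ with the pair $\{r, y\}$ determines $\nu(G, T \,\Delta\, \{r, y\})$ in terms of $|F|$ and $i$, and comparing with $|F^{*}| \ge \nu(G, T \,\Delta\, \{r, y\})$ yields
\begin{equation*}
w_F(Q) \ge 2\, w_F\bigl(E(P_x) \cap E(Q)\bigr).
\end{equation*}
Whenever $P_x$ can be chosen so that $w_F(E(P_x) \cap E(Q)) \ge 0$, this collapses the assumption $w_F(Q) < 0$ and delivers the contradiction.

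The hard part will be arranging this favorable $P_x$. My plan is to exchange edges between $P_x$ and $Q$ across zero-weight circuits: whenever $P_x$ and $Q$ share an $F$-edge $e$, I will use the bipartiteness of $(G, T)$ together with the common level $i$ of $x$ and $y$ to produce a circuit through $e$ of $w_F$-weight $0$; Lemma~\ref{lem:circuit} then converts it into an alternative minimum join, and hence an alternative $F$-shortest $r$-$x$ path avoiding $e$. Iterating this rerouting eliminates $F$-weighted overlaps one at a time until $w_F(E(P_x) \cap E(Q)) \ge 0$ holds, closing the argument. The principal technical obstacle lies in verifying that such zero-weight circuits always exist and that the rerouting terminates — this is where the hypothesis of bipartiteness, together with Theorem~\ref{thm:path2cut}'s fine control of how shortest paths traverse $\parcut{G}{K}$, is essential.
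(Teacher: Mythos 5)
There is a genuine gap, and also a concrete arithmetic omission that makes the argument fail even before the admitted ``hard part.''

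The inequality you derive from the cardinality comparison is wrong. With $P_x$ an $F$-shortest $r$--$x$ path, $w_F(P_x)=\distgtf{G}{T}{F}{r}{x}=i$, and Fact~\ref{fact:dist} gives $\nu(G, T\Delta\{r,y\}) = |F| - i$. Plugging into your (correct) identity $|F^*| = |F| + w_F(P_x) + w_F(Q) - 2w_F(E(P_x)\cap E(Q))$ and using $|F^*| \ge |F| - i$ yields
\begin{equation*}
w_F(Q) \;\ge\; 2\,w_F\bigl(E(P_x)\cap E(Q)\bigr) \;-\; 2i,
\end{equation*}
not $w_F(Q)\ge 2\,w_F(E(P_x)\cap E(Q))$. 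The extra $-2i$ is fatal whenever $i>0$, which certainly occurs: for a capital component $r\in V(K)$ forces $i\ge 0$, and for any graft with $\max_x\distgt{G}{T}{r}{x}>0$ the lemma has to handle levels $i>0$. In those cases, even arranging $w_F(E(P_x)\cap E(Q))\ge 0$ gives only $w_F(Q)\ge -2i$, which is no contradiction with $w_F(Q)<0$. So the approach does not close even in principle, independent of the rerouting difficulty you flag.

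Beyond that, the step you explicitly defer --- rerouting $P_x$ via zero-weight circuits until $w_F(E(P_x)\cap E(Q))\ge 0$ --- is precisely where a proof is needed, and it is far from immediate. Nothing in the tools you invoke (Lemma~\ref{lem:circuit}, Theorem~\ref{thm:path2cut}) obviously produces a suitable zero-weight circuit through an arbitrary shared $F$-edge, nor guarantees termination of the exchange process. The paper's proof avoids both problems entirely: it inducts on $i$, observes via Theorem~\ref{thm:neigh2dist} that the level set $\levelgtr{G}{T}{r}{i}$ is stable so that $Q$ decomposes into segments inside components $L$ of $K-\levelgtr{G}{T}{r}{i}$ together with $\parcut{G}{L}$-edges, bounds the cut contributions using Theorem~\ref{thm:sebocut} (at most one $F$-edge in each $\parcut{G}{L}$), and bounds the interior contributions by the induction hypothesis. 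This is a purely structural decomposition of $Q$ with no join-exchange at all, and it handles every sign of $i$ uniformly. I would recommend abandoning the contradiction-plus-exchange framework here and working out the level-by-level decomposition directly.
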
  
\begin{proof} 
We prove the lemma by the induction on $i$. 
If $i =  \min_{ v \in V(G) } \distgtf{G}{T}{F}{r}{v}$, then $| V(K) | = 1$, 
and the statement trivially holds. 
Next, let $i > \min_{ v \in V(G) } \distgtf{G}{T}{F}{r}{v}$, and 
assume that the statement holds for every case with smaller $i$.  
Let $x, y \in V(K)\cap \levelgtr{G}{T}{r}{i}$, and let $Q$ be an $F$-shortest path in $(G, T)_F[K]$ between $x$ and $y$. 
Because Theorem~\ref{thm:neigh2dist} implies that $\levelgtr{G}{T}{r}{i}$ is stable, 
we have $E(Q) \subseteq \bigcup \{ \parcut{G}{L} \cup E(L) : L \in \conn{K - \levelgtr{G}{T}{r}{i}} \}$.

Let $L \in \conn{ K - \levelgtr{G}{T}{r}{i} }$. 
The set $E(Q)\cap \parcut{G}{L}$ has an even number of edges, 
among which at most one can be from $F$, according to Theorem~\ref{thm:sebocut}. 
Hence, we have $w_F( Q. \parcut{G}{L} ) \ge 0$.  

We next prove $w_F(Q. E(L)) \ge 0$. 
Each connected component of $Q. E(L)$ is a path in $L$ between two vertices from $V(L)\cap \levelgtr{G}{T}{r}{i-1}$. 
The induction hypothesis implies that $ V(L)\cap \levelgtr{G}{T}{r}{i-1}$ is an extreme set of $(G, T)_F[L]$. 
Additionally, Lemma~\ref{lem:minimumjoin}  implies that $F\cap E(L)$ is a minimum join of $(G, T)_F[L]$. 
Hence, every connected component of $  Q. E(L) $ has a nonnegative $F$-weight. 
Thus, $w_F(Q. E(L)) \ge 0$ follows.

Therefore, we have $w_F( Q. \parcut{G}{L} \cup E(L) ) \ge 0$. 
This implies $w_F(Q) \ge 0$. 
The lemma is proved.

\end{proof}

\begin{lemma}\label{lem:noear2gext} 
Let $(G, T)$ be a bipartite graft, let $r\in V(G)$, and let $F$ be a minimum join of $(G, T)$. 
Let $K$ be a member of $\laycompall{G}{T}{r}$, and let $i$ be the index with $K \in \laycompgtr{G}{T}{r}{i}$. 
Assume that every round ear path relative to $K$ has a $F$-weight that is no less than $0$. 
Then, $V(K)\cap \levelgtr{G}{T}{r}{i}$ is an extreme set in $(G, T)$. 
\end{lemma}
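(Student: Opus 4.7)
The plan is to pick an $F$-shortest path $Q$ in $G$ between an arbitrary pair $x, y \in V(K) \cap \levelgtr{G}{T}{r}{i}$ and prove $w_F(Q) \ge 0$ by slicing $Q$ at its visits to $V(K) \cap \levelgtr{G}{T}{r}{i}$ and handling each slice with exactly one of the two inequalities at our disposal, namely Lemma~\ref{lem:inext} inside $K$ and the round ear hypothesis outside $K$.

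First I would note that Theorem~\ref{thm:neigh2dist} forces $\levelgtr{G}{T}{r}{i}$ to be a stable set in $G$, so the vertices of $Q$ that lie in $V(K) \cap \levelgtr{G}{T}{r}{i}$ cut $Q$ into consecutive subpaths $P_1, \ldots, P_t$, each having both endpoints in $V(K) \cap \levelgtr{G}{T}{r}{i}$ and no internal vertex in that set.

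The real content of the argument, and essentially the only nontrivial step, is the observation that every vertex $v \in V(K) \setminus \levelgtr{G}{T}{r}{i}$ has all of its $G$-neighbors inside $V(K)$. Indeed, such a $v$ satisfies $\distgt{G}{T}{r}{v} < i$, and by Theorem~\ref{thm:neigh2dist} any neighbor $u$ then satisfies $\distgt{G}{T}{r}{u} \le i$; so $u \in \layler{i}{r}$, and being adjacent to $v$ in $G[\layler{i}{r}]$ puts $u$ in the connected component of $v$ there, which is $V(K)$. This closure property forbids any $P_k$ from having internal vertices both in $V(K) \setminus \levelgtr{G}{T}{r}{i}$ and in $V(G) \setminus V(K)$, forcing each $P_k$ into exactly one of two shapes: either (a) all internal vertices of $P_k$ lie in $V(K)$, making $P_k$ a path in $G[V(K)]$ joining two vertices of $V(K) \cap \levelgtr{G}{T}{r}{i}$, or (b) all internal vertices of $P_k$ lie in $V(G) \setminus V(K)$, making $P_k$ a round ear path relative to $K$.

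In case (a), Lemma~\ref{lem:minimumjoin} guarantees that $F \cap E(K)$ is a minimum join of $(G, T)_F[K]$ (since every circuit of $K$ is a circuit of $G$ and thus has nonnegative $F$-weight), so Lemma~\ref{lem:inext} delivers $w_F(P_k) \ge 0$. In case (b) the hypothesis of the present lemma gives $w_F(P_k) \ge 0$ directly. Summing over $k$ yields $w_F(Q) = \sum_k w_F(P_k) \ge 0$, the required extremality of $V(K) \cap \levelgtr{G}{T}{r}{i}$ in $(G, T)$. Once the neighborhood-closure observation is in hand I foresee no remaining obstacle; that observation is exactly what produces the clean dichotomy and routes each $P_k$ to one of the two inequalities we already have.
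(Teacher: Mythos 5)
Your proof is correct and follows the same strategy as the paper: split a path between two vertices of $V(K)\cap\levelgtr{G}{T}{r}{i}$ into pieces, settle the pieces lying in $K$ with Lemma~\ref{lem:inext}, and settle the remaining pieces with the round-ear hypothesis. You are in fact somewhat more careful than the paper's own proof, which slices $P$ into $\connp{P.E(K)}$ and $\connp{P-E(K)}$ and tacitly treats members of the latter as round ear paths; your neighborhood-closure observation (that every vertex of $V(K)$ with $r$-distance strictly less than $i$ has all its $G$-neighbors in $V(K)$) is exactly what is needed to justify that treatment, since without slicing at $V(K)\cap\levelgtr{G}{T}{r}{i}$ a member of $\connp{P-E(K)}$ could in principle pass through a level-$i$ vertex of $V(K)$ as an internal vertex.
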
 
\begin{proof} 
Let $P$ be a path between two vertices $u,v \in V(K)\cap \levelgtr{G}{T}{r}{i}$.  
Note that $P$ is the sum of the members from $\connp{P. E(K)}$ and $\connp{P - E(K)}$. 
The assumption implies that every member of $\connp{P - E(K)}$ has an  $F$-weight no less than $0$. 
Lemma~\ref{lem:inext} implies that every member of $\connp{P. E(K)}$  has an $F$-weight no less than $0$. 
Hence, $w_F(P) \ge 0$ follows. 
This proves the lemma. 
\end{proof}

\begin{lemma} \label{lem:exext} 
Let $(G, T)$ be a bipartite graft, let $r\in V(G)$, and let $F$ be a minimum join of $(G, T)$. 
Let $K$ be a member of $\laycompall{G}{T}{r}$,  and let $i$ be the index with $K \in \laycompgtr{G}{T}{r}{i}$. 
\begin{rmenum} 
\item \label{item:noear} Let $P$ be a round ear path relative to $K$. Then, $w_F(P) \ge 0$ holds.  
If $w_F(P) = 0$ holds, then $r\not\in V(K)$ holds, 
and either bond of $P$ is  $r_K$, where $r_K$ is the vertex from $\partial_{G}(F)\cap V(K)$. 
\item \label{item:ext} 
 $V(K)\cap \levelgtr{G}{T}{r}{i}$ is an extreme set of the graft $(G, T)$. 
\end{rmenum} 
\end{lemma}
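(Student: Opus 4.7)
Part (ii) is an immediate consequence of part (i) via Lemma~\ref{lem:noear2gext}: once every round ear path relative to $K$ has non-negative $F$-weight, that Lemma delivers the extremeness of $V(K)\cap \levelgtr{G}{T}{r}{i}$ in $(G,T)$. So the substance lies in part (i).

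For part (i) my plan is a circuit argument. Let $x,y$ denote the bonds of $P$, and take $R$ to be an $F$-shortest $x$-$y$ path in the subgraft $(G, T)_F[K]$. Because the internal vertices of $P$ lie outside $V(K)$, the edge set $E(P)$ is disjoint from $E(K)\supseteq E(R)$; moreover only the first and last edges of $P$, call them $e_1$ and $e_2$, lie in $\parcut{G}{K}$. Hence $C:=R+P$ is a simple circuit in $G$, and by Lemma~\ref{lem:minimumjoin}, $w_F(C)=w_F(R)+w_F(P)\ge 0$, so $w_F(P)\ge -w_F(R)$. Theorem~\ref{thm:seboprimal} ensures that $F\cap E(K)$ is a minimum join of $(G, T)_F[K]$, and combining primality of $(G, T)_F[K]$ with respect to $r_K$ (when $r\not\in V(K)$) with Lemma~\ref{lem:tnonpositive} applied to this subgraft gives $w_F(R) = \distf{(G, T)_F[K]}{F\cap E(K)}{x}{y}\le 0$, which completes $w_F(P)\ge 0$.

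The equality analysis uses Lemma~\ref{lem:circuit}. If $w_F(P)=0$, then $w_F(R)\le 0$ and $w_F(C)\ge 0$ force $w_F(R)=0$ and $w_F(C)=0$, so $F':=F\Delta E(C)$ is itself a minimum join of $(G,T)$. Because $E(R)\subseteq E(K)$ contributes no cut edges, $F'\cap \parcut{G}{K}=(F\cap \parcut{G}{K})\Delta \{e_1,e_2\}$. Apply Theorem~\ref{thm:sebocut} to $F'$: if $r\in V(K)$, then $F\cap \parcut{G}{K}=\emptyset$, so $|F'\cap \parcut{G}{K}|=2\neq 0$---a contradiction---hence equality forces $r\not\in V(K)$. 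When $r\not\in V(K)$, the unique $F$-cut-edge $e^*$, whose $V(K)$-endpoint is $r_K$, must satisfy $e^*\in\{e_1,e_2\}$ for $|F'\cap \parcut{G}{K}|=1$ to hold as Theorem~\ref{thm:sebocut} requires, and this forces the corresponding bond of $P$ to equal $r_K$.

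The main obstacle I anticipate is securing the bound $w_F(R)\le 0$ uniformly when $r\in V(K)$: Theorem~\ref{thm:seboprimal} offers no primality there, and the subgraft $(G, T)_F[K]$ need not satisfy the factor-connectedness hypothesis of Lemma~\ref{lem:tnonpositive}, since capital components at positive levels contain vertices whose distance from $r$ is already strictly positive. The bound likely must come from Lemma~\ref{lem:inext}---which supplies non-negativity of pairwise distances at the top level of $K$---combined with a level-by-level induction on $i$, or from decomposing $K$ into its factor-components and piecing together non-positivity of pairwise distances via the general Kotzig-Lov\'asz structure of the subgraft.
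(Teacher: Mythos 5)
Your proof of part (ii) from part (i) via Lemma~\ref{lem:noear2gext} matches the paper. For part (i), however, your circuit argument has a genuine gap that you only partially diagnose.

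You need $w_F(R)\le 0$ for the $F$-shortest path $R$ between the bonds $x,y$ in $(G,T)_F[K]$. But the tools you cite do not give that, even when $r\notin V(K)$. Theorem~\ref{thm:seboprimal} gives primality \emph{with respect to} $r_K$ only, i.e., $\distf{(G,T)_F[K]}{F\cap E(K)}{r_K}{z}\le 0$ for all $z$; it says nothing about $\distf{(G,T)_F[K]}{F\cap E(K)}{x}{y}$ for an arbitrary pair of top-level vertices, since $T$-join distances do not obey the triangle inequality. Lemma~\ref{lem:tnonpositive} requires the subgraft to be factor-connected, which $(G,T)_F[K]$ in general is not: a distance component may contain several factor-components joined by non-allowed edges. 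Worse, the bound that \emph{is} available, from Lemma~\ref{lem:inext}, is $w_F(R)\ge 0$, since $V(K)\cap\levelgtr{G}{T}{r}{i}$ is extreme in $(G,T)_F[K]$ and $x,y$ both lie in that top level. So $w_F(C)=w_F(R)+w_F(P)\ge 0$ combined with $w_F(R)\ge 0$ gives no lower bound at all on $w_F(P)$, and the equality analysis via Lemma~\ref{lem:circuit} collapses for the same reason. In short, the obstacle you flag in your last paragraph for $r\in V(K)$ is in fact present in the case $r\notin V(K)$ as well, so the argument as written does not carry a complete case.

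The paper's proof avoids an internal $K$-path entirely. It runs a \emph{downward} induction on $i$: with $K$ at level $i$, let $e_u,e_v$ be the end-edges of $P$ and $z_u,z_v$ their interior endpoints. By Theorem~\ref{thm:neigh2dist} these lie in $\levelgtr{G}{T}{r}{i+1}$ and hence in the supercomponent $\hat{K}\in\laycompgtr{G}{T}{r}{i+1}$ containing $K$. The induction hypothesis for $\hat{K}$ (part (ii)) gives $w_F(z_uPz_v)\ge 0$, and Theorem~\ref{thm:sebocut} forces at most one of $e_u,e_v$ into $F$, so $w_F(e_u)+w_F(e_v)\ge 0$, yielding $w_F(P)\ge 0$. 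The equality discussion then also falls out: $w_F(P)=0$ forces exactly one of $e_u,e_v\in F$, which by Theorem~\ref{thm:sebocut} requires $r\notin V(K)$ and identifies the relevant bond as $r_K$. Replacing your fixed circuit construction with this level-$(i+1)$ induction is the missing ingredient.
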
 
\begin{proof} 
We prove the lemma by the induction on $i$. 
First, let $i = \max_{x \in V(G)} \distgtf{G}{T}{F}{r}{x}$, that is, $K = G$.   
The statement~\ref{item:noear} clearly holds, and Lemma~\ref{lem:inext} proves \ref{item:ext}.

Next, let $i < \max_{x \in V(G)} \distgtf{G}{T}{F}{r}{x}$, and assume that \ref{item:noear} and \ref{item:ext} hold for every case with greater $i$. 
Let $u, v\in V(P)$ be the bonds of $P$.  
For each $\alpha \in \{u, v\}$, let $e_\alpha \in E(P)$ be the edge of $P$ that is connected to $\alpha$, 
and let $z_\alpha\in V(P)$ be the end of $e_\alpha$ that is distinct from $\alpha$. 
Then, Theorem~\ref{thm:neigh2dist} implies $z_u, z_v \in V(\hat{K})\cap \levelgtr{G}{T}{r}{i+1}$, where $\hat{K}$ is the member of $\laycompgtr{G}{T}{r}{i+1}$ with $V(K)\subseteq V(\hat{K})$. 
From the induction hypothesis on \ref{item:ext}, we have $w_F(z_uPz_v) \ge 0$. 
Theorem~\ref{thm:sebocut} implies $w_F(e_u) + w_F(e_v) \ge 0$; 
the equality holds if and only if either $e_u$ or $e_v$ is in $F$.  
Hence, we obtain $w_F(P) \ge 0$; 
the equality holds if and only if $w_F(z_uPz_v) = 0$ and either $e_u$ or $e_v$ is in $F$.  
This implies \ref{item:noear}. 
Hence, Lemma~\ref{lem:noear2gext} now proves \ref{item:ext}.  
This completes the proof. 
\end{proof}

Theorem~\ref{thm:sebocut} and Lemma~\ref{lem:exext} imply Lemma~\ref{lem:ear2posi}.

\begin{lemma} \label{lem:ear2posi} 
Let $(G, T)$ be a bipartite graft, let $r\in V(G)$, and let $F$ be a minimum join of $(G, T)$. 
Let $K$ be a member of $\laycompall{G}{T}{r}$ with $r\in V(G)$. 
Then,  $w_F(P) \ge 2$ holds for every round ear path $P$ relative to $K$. 
\end{lemma}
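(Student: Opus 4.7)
The plan is to decompose the round ear path $P$ exactly as in the proof of Lemma~\ref{lem:exext} and then use the capital hypothesis $r\in V(K)$ (via Theorem~\ref{thm:sebocut}\,(ii)) to sharpen the previously obtained bound $w_F(P)\ge 0$ to $w_F(P)\ge 2$. Write $w_F(P)=w_F(e_u)+w_F(z_uPz_v)+w_F(e_v)$, where $u,v$ are the bonds of $P$, $e_\alpha$ is the edge of $P$ incident with bond $\alpha$, and $z_\alpha$ is the other end of $e_\alpha$; each of the three summands will be handled separately.

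For the middle segment, I would simply reuse the argument from the inductive step of Lemma~\ref{lem:exext}. Theorem~\ref{thm:neigh2dist} places $z_u$ and $z_v$ in $V(\hat K)\cap\levelgtr{G}{T}{r}{i+1}$, where $\hat K$ is the distance component at level $i+1$ containing $V(K)$. Lemma~\ref{lem:exext}\ref{item:ext}, applied to $\hat K$, states that $V(\hat K)\cap\levelgtr{G}{T}{r}{i+1}$ is an extreme set of $(G,T)$, and so $w_F(z_uPz_v)\ge 0$.

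For the two boundary edges, the new input is Theorem~\ref{thm:sebocut}\,(ii). Since $r\in V(K)$, the capital alternative of that theorem forces $\parcut{G}{K}\cap F=\emptyset$. Because $P$ is a round ear path relative to $K$ whose interior is disjoint from $V(K)$, both $e_u$ and $e_v$ lie in $\parcut{G}{K}$, so neither is in $F$; hence $w_F(e_u)=w_F(e_v)=1$. Adding the three contributions yields $w_F(P)\ge 1+0+1=2$, as required.

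There is essentially no real obstacle beyond a small bookkeeping step: one must verify that $z_u$ and $z_v$ do land in the level-$(i+1)$ layer of $\hat K$, rather than being pulled back into $V(K)$ or into an earlier layer. This follows from Theorem~\ref{thm:neigh2dist} together with the fact that $z_\alpha\notin V(K)$ (inherited from $P$ being a round ear path), which rules out $\distgtf{G}{T}{F}{r}{z_\alpha}\le i$. As the excerpt remarks, the statement really is an immediate synthesis of Theorem~\ref{thm:sebocut} with Lemma~\ref{lem:exext}.
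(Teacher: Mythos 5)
Your proof is correct and essentially matches the derivation the paper leaves implicit: the decomposition $P = e_u + z_uPz_v + e_v$ is exactly the one used inside the inductive step of Lemma~\ref{lem:exext}, with Theorem~\ref{thm:sebocut}\,(ii) now forcing $e_u, e_v\notin F$ (so the two boundary edges contribute exactly $+2$) and Lemma~\ref{lem:exext}\,(ii) applied to $\hat K$ giving $w_F(z_uPz_v)\ge 0$. A marginally different reading of the paper's one-line assertion would invoke Lemma~\ref{lem:exext}\,(i) directly --- the equality case there forces $r\notin V(K)$, so under the capital hypothesis $w_F(P)\ge 1$ --- and then upgrade to $w_F(P)\ge 2$ by bipartite parity, since both bonds lie in $\levelgtr{G}{T}{r}{i}$ and hence in the same color class, making $|E(P)|$ and $w_F(P)$ even; either route is fine, and yours is at least as direct.
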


\section{Fundamental Unit for Distances}

We provide and prove Lemmas~\ref{lem:ar4ad} and \ref{lem:ar4c} and then derive Lemma~\ref{lem:ar}. 
We then use Lemma~\ref{lem:ar} to derive Theorem~\ref{thm:unit4dist}. 

\begin{lemma} \label{lem:ar4ad} 
Let $(G, T)$ be a primal bipartite graft with respect to $r\in V(G)$, and let $F$ be a minimum join of $(G, T)$.  
Then, for every $x\in \agtr{G}{T}{r}$ and every $y\in V(G)$, 
$\distgtf{G}{T}{F}{r}{y} \le \distgtf{G}{T}{F}{x}{y}$. 
\end{lemma}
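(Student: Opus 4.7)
The plan is to proceed by induction on $k := -\distgtf{G}{T}{F}{r}{y}$, which is a nonnegative integer because $(G, T)$ is primal with respect to $r$. In the base case $k = 0$ we have $y \in \agtr{G}{T}{r}$: primality forces $V(\initialgtr{G}{T}{r}) = V(G)$, hence $\agtr{G}{T}{r} = \levelgtr{G}{T}{r}{0}$, and Lemma~\ref{lem:exext}\ref{item:ext} applied to $K = G$ with index $0$ says that $\agtr{G}{T}{r}$ is extreme in $(G, T)$. Therefore $\distgtf{G}{T}{F}{x}{y} \ge 0 = \distgtf{G}{T}{F}{r}{y}$.

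For the inductive step, assume $k \ge 1$ and that the lemma holds for every $y'$ with $-\distgtf{G}{T}{F}{r}{y'} < k$. Let $K$ be the member of $\laycompgtr{G}{T}{r}{-k}$ that contains $y$; since $k \ge 1$ we have $r \notin V(K)$. Fix an arbitrary $x$-$y$ path $P$ in $G$; the goal is to show $w_F(P) \ge -k$. Because $x \notin V(K)$ and $y \in V(K)$, $P$ uses an odd number $2c + 1$ of edges of $\parcut{G}{K}$; enumerate these crossings $e_1, \ldots, e_{2c+1}$ in the order $P$ traverses them, and write $P = P_0\, e_1\, P_1\, e_2 \cdots e_{2c+1}\, P_{2c+1}$ so that the odd-indexed $P_j$ lie in $V(K)$, while $P_0$ and the even-indexed $P_{2j}$ ($j \ge 1$) lie in $V(G) \setminus V(K)$. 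By Theorem~\ref{thm:neigh2dist}, each crossing joins a vertex of $\levelgtr{G}{T}{r}{-k+1}$ outside $K$ to one of $\levelgtr{G}{T}{r}{-k}$ inside $K$.

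Next estimate each summand of $w_F(P)$. For $P_0$, the far endpoint $u_1$ satisfies $\distgtf{G}{T}{F}{r}{u_1} = -k+1 > -k$, so the inductive hypothesis gives $\distgtf{G}{T}{F}{x}{u_1} \ge -k+1$ and hence $w_F(P_0) \ge -k+1$. Each inside piece $P_{2j-1}$ has both endpoints in $V(K) \cap \levelgtr{G}{T}{r}{-k}$, a set extreme in $(G, T)$ by Lemma~\ref{lem:exext}\ref{item:ext} (applied to $K$ at index $-k$); hence $w_F(P_{2j-1}) \ge 0$. For each outside-between piece $P_{2j}$ ($j \ge 1$), both of its endpoints are neighbours of $K$ at level $-k+1$ and therefore attach to $K$ inside $G[\laylegtr{G}{T}{r}{-k+1}]$; they thus lie in the unique $K^+ \in \laycompgtr{G}{T}{r}{-k+1}$ containing $K$, and a second application of Lemma~\ref{lem:exext}\ref{item:ext} (to $K^+$ at index $-k+1$) yields $w_F(P_{2j}) \ge 0$. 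Finally, since $r \notin V(K)$, Theorem~\ref{thm:sebocut} gives $|\parcut{G}{K} \cap F| = 1$, so at most one of the $2c + 1$ crossing edges is in $F$ and $\sum_j w_F(e_j) \ge (2c + 1) - 2 = 2c - 1$.

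Summing these four bounds, $w_F(P) \ge (-k + 1) + (2c - 1) = -k + 2c \ge -k$. Since $P$ was arbitrary, $\distgtf{G}{T}{F}{x}{y} \ge -k = \distgtf{G}{T}{F}{r}{y}$, closing the induction. The most delicate point is to see that both endpoints of each outside-between piece $P_{2j}$ lie in the \emph{same} parent distance component $K^+$ one level above $K$, so that Lemma~\ref{lem:exext}\ref{item:ext} can legitimately be invoked at index $-k+1$.
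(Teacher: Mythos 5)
Your proof is correct, but it takes a genuinely different route from the paper's. You induct on $k := -\distgtf{G}{T}{F}{r}{y}$, the (negated) distance from $r$ to the target vertex $y$, and decompose an \emph{arbitrary} $x$--$y$ path $P$ by its crossings of $\parcut{G}{K}$, where $K \in \laycompgtr{G}{T}{r}{-k}$ contains $y$. The inside pieces are controlled by the extremality of $V(K)\cap\levelgtr{G}{T}{r}{-k}$ and the outside-between pieces by the extremality of $V(K^+)\cap\levelgtr{G}{T}{r}{-k+1}$, both supplied by Lemma~\ref{lem:exext}\ref{item:ext}; the initial piece $P_0$ is controlled by the induction hypothesis applied at the smaller distance $k-1$; and the crossings are controlled by Theorem~\ref{thm:sebocut}. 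The paper instead inducts on the global quantity $l := \min_z \distgtf{G}{T}{F}{r}{z}$, decomposes an $F$-\emph{shortest} path $Q$ along the connected components $K$ of $G[\dgtr{G}{T}{r}]$, and recurses into the subgrafts $(G,T)_F[K]$ via Theorem~\ref{thm:seboprimal} (which both supplies primality of the subgraft at a shallower $\min$ and the identity $\distf{(G,T)_F[K]}{F\cap E(K)}{r_K}{y} = \distgtf{G}{T}{F}{r}{y}+1$). In short, you trade the paper's reliance on Theorem~\ref{thm:seboprimal} and the change of graft for a heavier reliance on Lemma~\ref{lem:exext}; your proof stays within the fixed graft $(G,T)$ and works on arbitrary paths, which makes the piece-by-piece estimates self-contained. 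The one delicate point you correctly flagged --- that both endpoints of each outside-between piece $P_{2j}$ lie in the same parent component $K^+$ --- holds exactly because each such endpoint is a neighbour of $K$ inside $G[\laylegtr{G}{T}{r}{-k+1}]$, so Lemma~\ref{lem:exext} applies legitimately at index $-k+1$.
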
 
\begin{proof} 
Let $l := \min_{ z \in V(G) } \distgtf{G}{T}{F}{r}{z} $. 
We prove the lemma by induction on $l$. 
If $l = 0$, then $V(G) = \{r\}$, and the statement trivially holds.

Next, let $l \le -1$, and assume that the statement holds for every case where $l$ is smaller.  
Let $x \in \agtr{G}{T}{r}$ and $y \in V(G)$, and let $Q$ be an $F$-shortest path between $x$ and $y$. 
Let $\mathcal{K} := \{ K \in \conng{G}{ \dgtr{G}{T}{r} }: V(K) \cap V(Q) \neq \emptyset \}$. 
Because $\agtr{G}{T}{r}$ is stable, we have $E(Q) \subseteq \bigcup_{K \in \mathcal{K}} \parcut{G}{K} \cup E(K) $.

\begin{pclaim} \label{claim:each} 
If $K\in\mathcal{K}$ satisfies $y \not\in V(K)$, then $w_F( Q. E(K)) \ge 0$ and $w_F( Q. \parcut{G}{K} ) \ge 0$.  
If $K\in\mathcal{K}$ satisfies $y \in V(K)$, 
then $w_F( Q. E(K)) \ge  \distf{ (G, T)_F[K] }{F \cap E(K)}{r_K}{y}$ and $w_F( Q. \parcut{G}{K} ) \ge -1$, 
where $r_K$ is the vertex from $\partial_{G}(F) \cap V(K)$.   
\end{pclaim} 
\begin{proof} 
Let $K \in \mathcal{K}$. 
First, consider the case  $y \in V(K)$. 
Theorem~\ref{thm:sebocut} proves  that  $w_F( Q. \parcut{G}{K} ) \ge -1$. 
According to Theorem~\ref{thm:seboprimal},  
  the graft $(G, T)_F[K]$ is primal with respect to $r_K$, 
  in which $\min_{ z \in V(K) } \distf{ (G, T)_F[K]}{F \cap E(K)}{r_K}{z} = l + 1$ 
  and $F \cap E(K)$ is a minimum join.  
Hence, the induction hypothesis can be applied for $(G, T)_F[K]$. 
If $R$ is a member of $\conn{ Q. E(K) }$ with $y\not\in V(R)$, 
then Theorem~\ref{thm:neigh2dist} implies that $R$ is a path between two vertices from $\apr{ (G, T)_F[K] }{r_K}$. 
Hence, $w_F(R) \ge 0$ holds.  
Otherwise, that is, if $y\in V(R)$ holds,  
then $R$ is a path between $y$ and a vertex from $\apr{ (G, T)_F[K] }{r_K}$. 
Hence, $w_F(R) \ge \distf{ (G, T)_F[K] }{F\cap E(K)}{r_K}{y}$  holds. 
Thus, $w_F(Q. E(K)) \ge \distf{ (G, T)_F[K] }{F\cap E(K)}{r_K}{y}$ is obtained.  
The case $y \not\in V(K)$ can be proved in the same way. 
\end{proof}

First, consider the case $y \in \dgtr{G}{T}{r}$; let $K$ be the member of $\mathcal{K}$ with $y\in V(K)$.
 Claim~\ref{claim:each} implies that 
 $w_F(Q) \ge \distf{ (G, T)_F[K] }{F \cap E(K)}{r_K}{y} - 1$. 
Additionally, Theorem~\ref{thm:seboprimal} implies $\distf{ (G, T)_F[K] }{F \cap E(K)}{r_K}{y} = \distgtf{G}{T}{F}{r}{y} + 1$.  
Thus, we obtain $\distgtf{G}{T}{F}{r}{y} \le \distgtf{G}{T}{F}{x}{y}$.   
The case $y\in \agtr{G}{T}{r}$ can be proved in the same way.  
This completes the proof. 
\end{proof}

\begin{lemma} \label{lem:ar4c} 
Let $(G, T)$ be a bipartite graft,  let $r\in V(G)$, and let $F$ be a minimum join of $(G, T)$.  
Then, for every $x\in \agtr{G}{T}{r}$ and every $y\in \cgtr{G}{T}{r}$, 
$\distgtf{G}{T}{F}{r}{y} \le \distgtf{G}{T}{F}{x}{y}$.  
\end{lemma}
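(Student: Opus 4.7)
The plan is to adapt the proof of Lemma~\ref{lem:ar4ad} to the non-primal setting. I would take an $F$-shortest path $Q$ between $x$ and $y$. By Theorem~\ref{thm:neigh2dist}, the set $\agtr{G}{T}{r}$ is contained in a single color class and is therefore stable, so every edge of $Q$ lies in $E(K)\cup\parcut{G}{K}$ for some connected component $K$ of $G[V(G)\setminus\agtr{G}{T}{r}]$; let $\mathcal K$ denote the subfamily of such components meeting $V(Q)$.

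A useful preliminary observation I would establish is that no $G$-edge joins $\dgtr{G}{T}{r}$ to $\cgtr{G}{T}{r}$: any edge from a vertex of the initial capital component to a vertex outside it must, by Theorem~\ref{thm:neigh2dist} and the maximality of the initial in $G[\layler{0}{r}]$, have its initial-side endpoint at level $0$, i.e.\ in $\agtr{G}{T}{r}$. Consequently $\mathcal K$ partitions as $\mathcal K_D\sqcup\mathcal K_C$ according to whether $V(K)\subseteq\dgtr{G}{T}{r}$ or $V(K)\subseteq\cgtr{G}{T}{r}$. For each $K\in\mathcal K_D$ the bookkeeping would be identical to that of Claim~\ref{claim:each} in the proof of Lemma~\ref{lem:ar4ad}: such a $K$ is a distance component at a negative level nested inside the initial, Theorem~\ref{thm:seboprimal} yields a primal subgraft $(G,T)_F[K]$, Lemma~\ref{lem:ar4ad} bounds $w_F(Q.E(K))$, and Theorem~\ref{thm:sebocut} bounds $w_F(Q.\parcut{G}{K})\ge -1$. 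For each $K\in\mathcal K_C$ I would locate a non-capital distance component $\hat K$ with $V(K)\subseteq V(\hat K)$, invoke Theorem~\ref{thm:seboprimal} to obtain $(G,T)_F[\hat K]$ primal with respect to some $r_{\hat K}$ together with the telescoping identity $\distgtf{G}{T}{F}{r}{y}=\distgtf{G}{T}{F}{r}{r_{\hat K}}+\distf{(G,T)_F[\hat K]}{F\cap E(\hat K)}{r_{\hat K}}{y}$, and apply Lemma~\ref{lem:ar4ad} inside $\hat K$ to bound the contribution from $K$.

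Summing over $\mathcal K$, every $K$ with $y\notin V(K)$ would contribute at least $0$ (by the extremity in Lemma~\ref{lem:exext} combined with the $F$-edge count from Theorem~\ref{thm:sebocut}), while the unique $K_y\in\mathcal K$ with $y\in V(K_y)$ would contribute at least $\distf{(G,T)_F[\hat K]}{F\cap E(\hat K)}{r_{\hat K}}{y}-1$. Careful bookkeeping of the level differences, using Theorem~\ref{thm:seboprimal}'s telescoping identity, should then turn this lower bound into $\distgtf{G}{T}{F}{r}{y}$, giving $w_F(Q)\ge\distgtf{G}{T}{F}{r}{y}$. Since $w_F(Q)=\distgtf{G}{T}{F}{x}{y}$, the lemma would follow.

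The hard part will be the $\mathcal K_C$ step: when $\distgtf{G}{T}{F}{r}{y}>0$, the smallest distance component containing $y$ may itself be capital, so no non-capital $\hat K\supseteq V(K_y)$ need exist. Handling this regime will require either choosing $\hat K$ at a strictly lower level that still contains the needed portion of $V(K_y)$, or bypassing Theorem~\ref{thm:seboprimal} entirely and using the extremity of $V(\hat K)\cap\levelr{i}{r}$ from Lemma~\ref{lem:exext}~(ii) to bound the $K_y$-contribution directly.
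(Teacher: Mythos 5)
Your worry about the $\mathcal K_C$ step is well founded, and it is not a technicality that can be patched within the decomposition framework you set up. The trouble is two-fold. First, as you note, a component $K\in\mathcal K_C$ with $y\in V(K)$ can contain vertices at strictly positive $F$-distance from $r$; for such a vertex there need be no non-capital distance component $\hat K$ in $\laycompall{G}{T}{r}$ containing it, so Theorem~\ref{thm:seboprimal} simply has no candidate to be applied to. Second, even when a non-capital $\hat K$ does exist, $K$ is a component of $G[\cgtr{G}{T}{r}]$ and \emph{not} of the form $\laylegtr{G}{T}{r}{j}$-component, so $Q$ can cross $\parcut{G}{\hat K}$ several times with non-$F$ edges; Theorem~\ref{thm:sebocut} then does not give you the clean $w_F(Q.\parcut{G}{\hat K})\ge-1$ bookkeeping that made Claim~\ref{claim:each} work, because it only controls the single $F$-edge, not the total count of crossings of $Q$. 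Your fallback idea of using the extremity of $V(\hat K)\cap\levelgtr{G}{T}{r}{j}$ from Lemma~\ref{lem:exext} still leaves you needing to decompose $Q.E(\hat K)$ against a stable level set inside a capital component, which is essentially the original problem one level down, so the induction does not close.

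The paper takes a much shorter route that sidesteps the componentwise bookkeeping entirely. Take an $F$-shortest path $P$ from $x$ to $y$ and trace it from $y$; let $z$ be the first vertex encountered that lies in $\agtr{G}{T}{r}$. By Lemma~\ref{lem:exext}, $\agtr{G}{T}{r}$ is extreme, so $w_F(xPz)\ge\distgtf{G}{T}{F}{x}{z}\ge 0$. Since $z\in\agtr{G}{T}{r}$, an $F$-shortest $r$--$z$ path $Q$ has $w_F(Q)=0\le w_F(xPz)$, and by Theorem~\ref{thm:path2cut}~(i) applied to $\initialgtr{G}{T}{r}$, $V(Q)\subseteq\agtr{G}{T}{r}\cup\dgtr{G}{T}{r}$. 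Your own preliminary observation (no $G$-edge joins $\dgtr{G}{T}{r}$ to $\cgtr{G}{T}{r}$) then shows the internal vertices of $zPy$ all lie in $\cgtr{G}{T}{r}$, so $Q+zPy$ is a genuine $r$--$y$ path with $w_F(Q+zPy)\le w_F(P)=\distgtf{G}{T}{F}{x}{y}$, which is exactly the claim. In short, you only ever need the extreme-set property of $\agtr{G}{T}{r}$ on the one portion $xPz$ of the path, and a single rerouting of that portion through the initial component; no case analysis over $\mathcal K$ and no appeal to Lemma~\ref{lem:ar4ad} is required for this lemma (that lemma handles the complementary case $y\in\agtr{G}{T}{r}\cup\dgtr{G}{T}{r}$ in the proof of Lemma~\ref{lem:ar}).
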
 
\begin{proof} 
Let $x\in \agtr{G}{T}{r}$ and  $y\in \cgtr{G}{T}{r}$, 
and let $P$ be an $F$-shortest path between $x$ and $y$. 
Trace $P$ from $y$, and let $z$ be the first encountered vertex that is in $\agtr{G}{T}{r}$. 
Because Lemma~\ref{lem:exext} implies that $\agtr{G}{T}{r}$ is extreme, we have $w_F(xPz) \ge \distgtf{G}{T}{F}{x}{z} \ge 0$. 
Let $Q$ be an $F$-shortest path between $r$ and $z$.  Thus, we have $w_F(xPz) \ge w_F(Q)$.

Theorem~\ref{thm:path2cut} implies $V(Q) \subseteq \agtr{G}{T}{r}\cup \dgtr{G}{T}{r}$. 
Therefore, $yPz + Q$ is a path between $r$ and $y$ 
for which $w_F(Q + zPy) \le w_F(xPz + zPy) = w_F(P)$ holds. 
Accordingly, $\distgtf{G}{T}{F}{r}{y} \le \distgtf{G}{T}{F}{x}{y}$ follows. 
This proves the lemma. 
\end{proof}

\begin{lemma} \label{lem:ar} 
Let $(G, T)$ be a bipartite graft,  let $r\in V(G)$, and let $F$ be a minimum join of $(G, T)$.  
Then, for every $x\in \agtr{G}{T}{r}$ and every $y\in V(G)$, 
$\distgtf{G}{T}{F}{r}{y} \le \distgtf{G}{T}{F}{x}{y}$. 
\end{lemma}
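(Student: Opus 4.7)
The plan is to derive Lemma~\ref{lem:ar} by case analysis on whether $y$ lies in the initial component $K := \initialgtr{G}{T}{r}$. If $y \in \cgtr{G}{T}{r}$, Lemma~\ref{lem:ar4c} applies verbatim. For the substantial case $y \in V(K) = \agtr{G}{T}{r} \cup \dgtr{G}{T}{r}$, I will invoke Lemma~\ref{lem:ar4ad} after restricting the graft to $K$.

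The first task in the second case is to verify two prerequisites for the subgraft $(G, T)_F[K]$: that $F \cap E(K)$ is a minimum join of it, and that $(G, T)_F[K]$ is primal with respect to $r$. Minimality follows from Lemma~\ref{lem:minimumjoin}, since every circuit inside $K$ is a circuit of $G$ and hence has nonnegative $F$-weight. Primality follows from Theorem~\ref{thm:path2cut}(i): any $F$-shortest $rv$-path in $G$ with $v \in V(K)$ is entirely contained in $K$, so the subgraft distance from $r$ coincides with $\distgtf{G}{T}{F}{r}{v}$, which is nonpositive by the definition of $K$. In particular, the $A$-set of the subgraft with root $r$ coincides with $\agtr{G}{T}{r}$, so Lemma~\ref{lem:ar4ad} applied to $(G, T)_F[K]$ yields, for every $x \in \agtr{G}{T}{r}$ and $y \in V(K)$,
\[
\distf{(G,T)_F[K]}{F\cap E(K)}{r}{y} \le \distf{(G,T)_F[K]}{F\cap E(K)}{x}{y},
\]
whose left-hand side equals $\distgtf{G}{T}{F}{r}{y}$ by the primality argument.

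The main obstacle is to show that the right-hand side matches $\distgtf{G}{T}{F}{x}{y}$, that is, that some $F$-shortest $xy$-path in $G$ stays inside $K$. The key ingredient is Lemma~\ref{lem:ear2posi}: every round ear path relative to the capital component $K$ has $F$-weight at least $2$, so an excursion outside $K$ costs at least $2$ and should be replaceable by a shortcut inside $K$ without increasing the total $F$-weight. Making this rigorous requires care, since a naive rerouting yields a walk rather than a path, and walk-to-path reduction in this signed-weight setting is not automatically nonincreasing in $F$-weight (a back-and-forth traversal of an $F$-edge is a closed subwalk of negative $F$-weight). I plan to address this by picking the in-$K$ replacement of each excursion to be edge-disjoint from the remainder of the original path, so that every closed subwalk short-circuited during the walk-to-path reduction is a genuine circuit and can be bounded below in $F$-weight by $0$ via Lemma~\ref{lem:minimumjoin}; a strict $F$-weight decrease by at least $2$ from eliminating one excursion then contradicts the minimality of the original $xy$-path, forcing some $F$-shortest $xy$-path to stay inside $K$.
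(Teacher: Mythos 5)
Your high-level plan matches the paper's: case split via Lemma~\ref{lem:ar4c} for $y\in\cgtr{G}{T}{r}$, and reduction to the subgraft $(G,T)_F[\initialgtr{G}{T}{r}]$ followed by Lemma~\ref{lem:ar4ad} for $y\in V(K)$. Your verification that $F\cap E(K)$ is a minimum join of $(G,T)_F[K]$, that the subgraft is primal with respect to $r$, and that its $A$-set equals $\agtr{G}{T}{r}$ is correct and is the right way to flesh out the paper's terse reduction. You also correctly isolate the crux: you have $\distgtf{G}{T}{F}{r}{y}\le\lambda_{(G,T)_F[K]}(x,y)$ from Lemma~\ref{lem:ar4ad}, and you need the reverse containment between the two $xy$-distances.

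The gap is in how you propose to close this. Your rerouting argument hinges on being able to pick, for each excursion of an $F$-shortest $xy$-path $P$ outside $K$, a replacement segment inside $K$ that is edge-disjoint from the rest of $P$; only then are the closed subwalks removed during walk-to-path reduction genuine circuits to which Lemma~\ref{lem:minimumjoin} applies. But such edge-disjoint replacements need not exist. For instance, if the portion of $K$ between the two bonds $u,v$ of an excursion is a single path, and $P$ already traverses part of it before or after the excursion, every $uv$-path inside $K$ shares edges with $P$. There is also a quieter second issue: the excursion has $F$-weight at least $2$ (Lemma~\ref{lem:ear2posi}), but the in-$K$ replacement has $F$-weight at least $0$ and even, and could itself be $2$ or more, so one does not get the ``strict decrease by at least $2$'' needed for the contradiction you invoke; at best one gets a non-increase, and then the induction/extremal choice you would need interacts badly with the edge-disjointness problem.

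The simpler route, which avoids rerouting entirely and needs nothing more than the lemmas you already cite, is to lower-bound $w_F(P)$ directly for an \emph{arbitrary} $xy$-path $P$ rather than try to show an $F$-shortest one stays in $K$. Write $P=P_1+Q_1+P_2+\cdots+Q_{m-1}+P_m$ where each $P_j$ lies in $K$ and each $Q_i$ is a round ear path relative to $K$. Theorem~\ref{thm:neigh2dist} forces every bond of every $Q_i$ to lie in $\agtr{G}{T}{r}$ (a vertex of $K$ with a neighbor outside $K$ must be at level $0$). Then $w_F(Q_i)\ge 2$ by Lemma~\ref{lem:ear2posi}; $w_F(P_j)\ge 0$ for $j<m$ because both endpoints of $P_j$ lie in $\agtr{G}{T}{r}$, which is extreme by Lemma~\ref{lem:exext}; and $w_F(P_m)\ge\lambda_{(G,T)_F[K]}(b,y)\ge\lambda_{(G,T)_F[K]}(r,y)=\distgtf{G}{T}{F}{r}{y}$ by Lemma~\ref{lem:ar4ad} applied to the subgraft (where $b\in\agtr{G}{T}{r}$ is the start of $P_m$, or $b=x$ if $m=1$). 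Summing gives $w_F(P)\ge\distgtf{G}{T}{F}{r}{y}$, which is the claim. This is what you need; the stronger statement that an $F$-shortest $xy$-path stays in $K$ is a detour you do not have to take.
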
 
\begin{proof} 
For $y\in \cgtr{G}{T}{r}$, Lemma~\ref{lem:ar4c} implies the claim. 
Let $y\in \agtr{G}{T}{r}\cup \dgtr{G}{T}{r}$. 
According to Theorem~\ref{thm:path2cut}, 
we can consider the claim for $(G, T)_F[\initialgtr{G}{T}{r}]$ in stead of $(G, T)$; that is, 
we can assume that  $(G, T)$ is primal with respect to $r$.  
Lemma~\ref{lem:ar4ad} then implies the claim. 
This proves the lemma. 
\end{proof}

\begin{lemma}[Kita~\cite{kita2017parity}] \label{lem:root2partition}  
Let $(G, T)$ be a bipartite graft. 
Let $r\in V(G)$, and let $S$ be the member of  $\tpart{G}{T}$ with $r\in S$. 
Then, 
$S \subseteq \agtr{G}{T}{r}$ holds. 
\end{lemma}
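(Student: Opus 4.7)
The plan is to verify the two defining requirements of $\agtr{G}{T}{r}$ for each $u \in S$: that $u$ lies at level $0$, and that $u$ lies in the initial distance component. The first is essentially a restatement of the definition of $\gtsim{G}{T}$ together with Fact~\ref{fact:dist}: $u \in S$ says $u$ shares a factor-component with $r$ and $\nu(G,T)-\nu(G,T\Delta\{r,u\})=0$, so $\distgtf{G}{T}{F}{r}{u}=0$ and hence $u \in \levelr{0}{r}$.

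For the second requirement, my strategy is to show the stronger statement that the whole factor-component $H$ of $r$ is contained in $\laylegtr{G}{T}{r}{0}$; connectedness of $H$ together with $r,u \in V(H)$ then places both in the same connected component of $G[\laylegtr{G}{T}{r}{0}]$, namely $\initialgtr{G}{T}{r}$. To establish the containment, I would pass to the restricted graft $(H, T\cap V(H))$ and apply Lemma~\ref{lem:tnonpositive}. This requires checking that $(H, T\cap V(H))$ is a factor-connected graft with $F\cap E(H)$ as a minimum join, which in turn hinges on the key observation that $F$-edges are allowed and hence never cross factor-component boundaries. With this in place, Lemma~\ref{lem:tnonpositive} gives $\distgtf{H}{T\cap V(H)}{F\cap E(H)}{r}{w} \le 0$ for every $w\in V(H)$, and since any path in $H$ is a path in $G$ with identical $F$-weight, the same bound transfers to $\distgtf{G}{T}{F}{r}{w}$, yielding $V(H) \subseteq \laylegtr{G}{T}{r}{0}$.

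The main obstacle is the bookkeeping around the restriction: confirming that $F\cap E(H)$ is a minimum join of $(H, T\cap V(H))$ (any smaller join would, together with $F\setminus E(H)$, contradict minimality of $F$ in $(G,T)$), and that factor-connectedness and allowed-ness transfer correctly from $(G,T)$ to the restriction. Each step is routine once one uses that $F$-edges stay within factor-components, but the argument cannot be skipped before invoking Lemma~\ref{lem:tnonpositive}.
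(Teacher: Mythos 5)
Your argument is correct. Note first that the paper itself gives no proof of this lemma: it is quoted from the earlier work \cite{kita2017parity}, so there is no in-paper argument to compare against. Judged on its own, your derivation is sound and uses only material available in the present paper. The first half (that $S\subseteq \levelgtr{G}{T}{r}{0}$) is indeed immediate from the definition of $\gtsim{G}{T}$ and Fact~\ref{fact:dist}. The second half is the part that genuinely needs an idea, and you supply the right one: a weight-$0$ shortest path from $r$ to $u$ may pass through vertices of positive distance from $r$, so one cannot place $u$ in $\initialgtr{G}{T}{r}$ by following such a path; instead you show the entire factor-component $H$ of $r$ sits inside $\laylegtr{G}{T}{r}{0}$ via Lemma~\ref{lem:tnonpositive} applied to the restricted graft, and then connectivity of $H$ finishes the argument. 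The bookkeeping you flag is exactly what needs checking and it all goes through: edges of a minimum join are allowed by definition, hence join the two ends within a single factor-component, so $F\cap E(H)$ has the correct parity at every vertex of $H$, is minimum by the exchange argument you sketch, and $(H,T\cap V(H))$ coincides with $(G,T)_F[V(H)]$ and is factor-connected (allowedness transfers because the same exchange argument applies to any minimum join containing a given allowed edge of $H$). I would only ask you to state explicitly that $F$-edges incident to $V(H)$ actually belong to $E(H)$ (maximality of $H$ as a factor-connected subgraph), since that is what makes both the parity claim and the minimality exchange work.
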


Lemmas~\ref{lem:ar} and \ref{lem:root2partition} imply Theorem~\ref{thm:unit4dist}.

\begin{theorem} \label{thm:unit4dist} 
Let $(G, T)$ be a bipartite graft, and let $F$ be a minimum join of $(G, T)$. 
Let $x, y\in V(G)$ be vertices with $x\gtsim{G}{T} y$. 
Then,  $\distgtf{G}{T}{F}{x}{z} = \distgtf{G}{T}{F}{y}{z}$ for every $z\in V(G)$. 
\end{theorem}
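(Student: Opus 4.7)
The plan is to derive Theorem~\ref{thm:unit4dist} by two applications of Lemma~\ref{lem:ar}, one with each of $x$ and $y$ playing the role of the root, and to use Lemma~\ref{lem:root2partition} to place the other vertex into the set $A$ of the corresponding graft.

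More concretely, I would first fix a minimum join $F$ and note that by Fact~\ref{fact:dist} the $F$-distance between any two vertices does not depend on the choice of $F$, so we may work with this single $F$ throughout. Let $S \in \tpart{G}{T}$ be the class of $\gtsim{G}{T}$ containing both $x$ and $y$ (they lie in the same class by hypothesis and by Theorem~\ref{thm:tkl}).

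The first step is to apply Lemma~\ref{lem:root2partition} with $r := x$: it yields $S \subseteq \agtr{G}{T}{x}$, and in particular $y \in \agtr{G}{T}{x}$. Lemma~\ref{lem:ar}, applied with root $r := x$ and with the $y$-vertex of that lemma being our $y$ and the $z$-vertex being arbitrary $z \in V(G)$, then gives
\[
\distgtf{G}{T}{F}{x}{z} \le \distgtf{G}{T}{F}{y}{z}.
\]
Next, by symmetry of $\gtsim{G}{T}$, we can exchange the roles of $x$ and $y$: apply Lemma~\ref{lem:root2partition} with $r := y$ to obtain $x \in \agtr{G}{T}{y}$, and then Lemma~\ref{lem:ar} with $r := y$ yields the reverse inequality $\distgtf{G}{T}{F}{y}{z} \le \distgtf{G}{T}{F}{x}{z}$. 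Combining the two inequalities gives the desired equality.

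There is essentially no obstacle once Lemmas~\ref{lem:ar} and \ref{lem:root2partition} are in hand; the only small point to verify is that Lemma~\ref{lem:ar} is applicable for both choices of the root $r$, which is immediate because the hypotheses of that lemma impose no condition on $r$ beyond being a vertex of $G$. The conceptual work has already been done in proving the extremality statements (Lemma~\ref{lem:exext}) and the fundamental-unit lemma (Lemma~\ref{lem:ar}); the present theorem is the clean symmetric consequence.
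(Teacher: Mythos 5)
Your proposal is correct and is essentially identical to the paper's own proof: the paper also applies Lemma~\ref{lem:root2partition} to get $y \in \agtr{G}{T}{x}$, invokes Lemma~\ref{lem:ar} with root $x$ for one inequality, and then swaps the roles of $x$ and $y$ for the reverse inequality. No issues.
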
 
\begin{proof} 
Lemma~\ref{lem:root2partition} implies $y \in \agtr{G}{T}{x}$. 
Hence, Lemma~\ref{lem:ar} implies $\distgtf{G}{T}{F}{x}{z} \le \distgtf{G}{T}{F}{y}{z}$ for every $z\in V(G)$. 
Similarly, 
we can also obtain $x \in \agtr{G}{T}{y}$ and $\distgtf{G}{T}{F}{x}{z} \ge \distgtf{G}{T}{F}{y}{z}$ for every $z\in V(G)$.  
Thus, 
 for every $z\in V(G)$,  $\distgtf{G}{T}{F}{x}{z} = \distgtf{G}{T}{F}{y}{z}$.  
\end{proof}

\section{Structure of Initial Components} 
\subsection{Structure of $\agtr{G}{T}{r}$}

\begin{lemma} \label{lem:fcomp2ar} 
Let $(G, T)$ be a bipartite graft, and let $r\in V(G)$. 
Let $x\in \agtr{G}{T}{r}$, let $S$ be the member of $\tpart{G}{T}$ with $x\in S$, 
and let $C\in\tcomp{G}{T}$ be a factor-component with $x\in V(C)$.  
Then, $\agtr{G}{T}{r} \cap V(C) = S$, and $V(C)\setminus S \subseteq \dgtr{G}{T}{r}$.  
\end{lemma}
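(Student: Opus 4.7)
The plan is to establish three claims in sequence: (a) the entire factor-component $C$ sits inside the initial distance component $\initialgtr{G}{T}{r}$; (b) $\agtr{G}{T}{r} \cap V(C) = S$; and (c) $V(C) \setminus S \subseteq \dgtr{G}{T}{r}$, which will follow immediately once (a) and (b) are in hand. Throughout, fix a minimum join $F$ of $(G, T)$.

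For (a), I would first show that every $y \in V(C)$ satisfies $\distgtf{G}{T}{F}{x}{y} \le 0$ by applying Lemma~\ref{lem:tnonpositive} to the factor-component $(C, V(C)\cap T)$ equipped with the restricted minimum join $F \cap E(C)$; since any path in $C$ is also a path in $G$, the distance in $C$ upper-bounds the distance in $G$. As $x \in \agtr{G}{T}{r}$, Lemma~\ref{lem:ar} then gives $\distgtf{G}{T}{F}{r}{y} \le \distgtf{G}{T}{F}{x}{y} \le 0$, so $V(C) \subseteq \layler{0}{r}$. Because $C$ is connected and contains $x \in V(\initialgtr{G}{T}{r})$, every vertex of $V(C)$ is connected to $x$ via edges of $C$ inside $G[\layler{0}{r}]$, and hence to $r$; this yields $V(C) \subseteq V(\initialgtr{G}{T}{r})$.

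For (b), the inclusion $S \subseteq \agtr{G}{T}{r} \cap V(C)$ is immediate: any $y \in S$ lies in $V(C)$ by definition of $\gtsim{G}{T}$ (same factor-component as $x$), and Theorem~\ref{thm:unit4dist} gives $\distgtf{G}{T}{F}{r}{y} = \distgtf{G}{T}{F}{r}{x} = 0$; combining with (a) places $y$ in $V(\initialgtr{G}{T}{r}) \cap \levelr{0}{r} = \agtr{G}{T}{r}$. For the converse, fix $y \in \agtr{G}{T}{r} \cap V(C)$; the extremality of $\agtr{G}{T}{r}$ (Lemma~\ref{lem:exext}(ii) with $K = \initialgtr{G}{T}{r}$ and $i = 0$) gives $\distgtf{G}{T}{F}{x}{y} \ge 0$, while the factor-component argument from (a) gives $\distgtf{G}{T}{F}{x}{y} \le 0$; hence the distance is $0$ and $y \in V(C)$ forces $y \gtsim{G}{T} x$, i.e.\ $y \in S$. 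Claim (c) is then automatic: for $y \in V(C) \setminus S$, (a) gives $y \in V(\initialgtr{G}{T}{r})$, and (b) with $y \notin S$ forbids $y \in \agtr{G}{T}{r}$, so $y \in \dgtr{G}{T}{r}$.

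The main obstacle is Step (a)'s reduction to the factor-component: legitimately invoking Lemma~\ref{lem:tnonpositive} on $(C, V(C)\cap T)$ requires that this pair is a factor-connected graft and that $F \cap E(C)$ is a minimum join of it. Both are standard consequences of the structure of factor-components (parity of $|V(C) \cap T|$ and preservation of minimality under restriction), but should be stated explicitly, perhaps via a short circuit-based argument using Lemmas~\ref{lem:minimumjoin} and~\ref{lem:circuit}. Once that background is granted, the rest of the proof is a clean bookkeeping combining extremality (Lemma~\ref{lem:exext}), the fundamental-unit property (Lemma~\ref{lem:ar}), and invariance of distances within a $\gtsim{G}{T}$-class (Theorem~\ref{thm:unit4dist}).
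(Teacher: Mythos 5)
Your proof is correct and follows the same overall skeleton as the paper's: the two substantive containments, $S \subseteq \agtr{G}{T}{r}$ and $(V(C)\setminus S)\cap \agtr{G}{T}{r} = \emptyset$, are obtained exactly as in the paper, via Theorem~\ref{thm:unit4dist} and via Lemmas~\ref{lem:tnonpositive} and \ref{lem:exext} respectively. The one place you genuinely diverge is the preliminary containment $V(C)\subseteq V(\initialgtr{G}{T}{r})$: the paper gets this in one line from Theorem~\ref{thm:sebocut}, which says the cut of the capital component meets no minimum join and hence contains no allowed edge, so a factor-component touching $\initialgtr{G}{T}{r}$ cannot cross that cut. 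Your route --- bounding $\distgtf{G}{T}{F}{x}{y}$ by $0$ inside $C$ via Lemma~\ref{lem:tnonpositive}, pushing this to $\distgtf{G}{T}{F}{r}{y}\le 0$ via Lemma~\ref{lem:ar}, and then using connectivity of $C$ --- is also valid, just slightly longer and dependent on the heavier Lemma~\ref{lem:ar}. The caveat you flag yourself, that Lemma~\ref{lem:tnonpositive} must be applied to the factor-component viewed as a factor-connected subgraft with the restricted minimum join, is real, but it is precisely the same implicit step the paper takes when it invokes Lemma~\ref{lem:tnonpositive} in its own proof, so it is not a gap relative to the paper's standard of rigor.
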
 
\begin{proof} 
Because Theorem~\ref{thm:sebocut} implies that $\parcut{G}{\initialgtr{G}{T}{r}}$ contains no allowed edges, 
we have $V(C)\subseteq \agtr{G}{T}{r} \cup \dgtr{G}{T}{r}$. 
From Theorem~\ref{thm:unit4dist}, we 
have $\distgt{G}{T}{r}{x} = \distgt{G}{T}{r}{y} = 0$ for every $y\in S$; 
hence,  $S \subseteq \agtr{G}{T}{r}$ holds.  
Additionally, Lemmas \ref{lem:tnonpositive} and \ref{lem:exext} imply $( V(C)\setminus S )\cap \agtr{G}{T}{r} = \emptyset$. 
Hence, $V(C)\setminus S \subseteq \dgtr{G}{T}{r}$ follows.   
This completes the proof. 
\end{proof}

Lemma~\ref{lem:fcomp2ar} easily implies Lemma~\ref{lem:ar2partition}. 

\begin{lemma} \label{lem:ar2partition} 
Let $(G, T)$ be a bipartite graft, and let $r\in V(G)$. 
Then, there are equivalence classes $S_1, \ldots, S_k \in\tpart{G}{T}$, where $k\ge 1$, 
such that $\agtr{G}{T}{r} = S_1 \cup \cdots \cup S_k$. 
\end{lemma}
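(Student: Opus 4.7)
The plan is to derive the lemma as a direct consequence of Lemma~\ref{lem:fcomp2ar} together with the fact that the relation $\gtsim{G}{T}$ partitions $V(G)$ (Theorem~\ref{thm:tkl}). Essentially, Lemma~\ref{lem:fcomp2ar} already tells us that whenever a vertex $x$ lies in $\agtr{G}{T}{r}$, its entire $\gtsim{G}{T}$-class is contained in $\agtr{G}{T}{r}$; hence $\agtr{G}{T}{r}$ is saturated with respect to the equivalence relation, and must be a union of classes.

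Concretely, I would proceed as follows. First, pick an arbitrary $x \in \agtr{G}{T}{r}$ and let $S \in \tpart{G}{T}$ be the class containing $x$. Let $C \in \tcomp{G}{T}$ be the factor-component containing $x$; note that $S \subseteq V(C)$ by the definition of $\gtsim{G}{T}$. Lemma~\ref{lem:fcomp2ar} then gives $\agtr{G}{T}{r} \cap V(C) = S$, so in particular $S \subseteq \agtr{G}{T}{r}$. Since this holds for every $x \in \agtr{G}{T}{r}$, the set $\agtr{G}{T}{r}$ is a union of classes from $\tpart{G}{T}$.

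It remains to check $k \ge 1$. This follows because $r \in \agtr{G}{T}{r}$: by definition $\agtr{G}{T}{r} = V(\initialgtr{G}{T}{r}) \cap \levelr{0}{r}$, and $r$ belongs to both $V(\initialgtr{G}{T}{r})$ (as $\initialgtr{G}{T}{r}$ is the capital component) and $\levelr{0}{r}$ (since $\distgtf{G}{T}{F}{r}{r} = 0$). Hence the class of $r$ is one of the $S_i$, and the decomposition $\agtr{G}{T}{r} = S_1 \cup \cdots \cup S_k$ with $k \ge 1$ is obtained.

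There is no real obstacle here; the entire substance of the argument is already packaged in Lemma~\ref{lem:fcomp2ar}, and the present lemma is just the immediate reformulation of that containment statement as a union-of-classes decomposition, together with the trivial observation that $r$ itself contributes at least one class.
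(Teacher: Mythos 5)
Your proof is correct and follows exactly the route the paper intends: it unpacks the statement ``Lemma~\ref{lem:fcomp2ar} easily implies Lemma~\ref{lem:ar2partition}'' by noting that $\agtr{G}{T}{r}$ is saturated under $\gtsim{G}{T}$ and that $r$ itself supplies at least one class. No gaps.
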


\subsection{Structure of $\dgtr{G}{T}{r}$ } 
\subsubsection{Classification of  Odd Components}

\begin{definition} 
Let $(G, T)$ be a bipartite graft,  and let $r\in V(G)$. 
For each $S \in \tpart{G}{T}$ with $S \subseteq \agtr{G}{T}{r}$,  
we denote by $\neicompgtr{G}{T}{r}{S}$ the set of connected components of $G[ \dgtr{G}{T}{r} ]$ that are adjacent to $S$ with allowed edges. 
The set of vertices from the members of $\neicompgtr{G}{T}{r}{S}$ is denoted by $\neisetgtr{G}{T}{r}{S}$.   
\end{definition}

\begin{lemma} \label{lem:neigh2elem} 
Let $(G, T)$ be a bipartite graft, let $r\in V(G)$, and let $F$ be a minimum join. 
Let $K$ be a member of $\laycompall{G}{T}{r}$ with $r\not\in V(K)$.  
Let $e$ be the edge from $\parcut{G}{K} \cap F$, and let $r_K$ be the end of $e$ with $r_K \in V(K)$. 
Let $f \in \parcut{G}{K}$ be an allowed edge, and let $t$ be the end of $f$ with $t\in V(K)$. 
Then,  
the graft $(G, T)_F[K]$ has a path between $r_K$ and $t$ whose $F$-weight is $0$ and the edges are allowed in $(G, T)$. 
\end{lemma}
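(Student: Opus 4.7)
The plan is to construct the required path by exhibiting a suitable circuit inside $F \Delta F'$, where $F'$ is a minimum join containing $f$, and then controlling its $F$-weight from both sides via the extremeness results of the preceding section.

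If $f = e$ the single-vertex path at $r_K = t$ already satisfies the claim, so I assume $f \ne e$. Because $f$ is allowed, I fix a minimum join $F'$ with $f \in F'$; Theorem~\ref{thm:sebocut}(i) applied to $F'$ then gives $F' \cap \parcut{G}{K} = \{f\}$, whence $(F \Delta F') \cap \parcut{G}{K} = \{e, f\}$. The edge set $F \Delta F'$ has even degree at every vertex, so I decompose it into edge-disjoint circuits. For each circuit $C_j$ in such a decomposition, Lemma~\ref{lem:minimumjoin} gives $w_F(C_j), w_{F'}(C_j) \ge 0$, while each edge $h \in F \Delta F'$ satisfies $w_F(h) + w_{F'}(h) = 0$; summing around $C_j$ therefore forces $w_F(C_j) = 0$. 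Counting cut edges modulo $2$ per circuit, exactly one circuit $C$ in the decomposition uses both $e$ and $f$ and meets $\parcut{G}{K}$ only in these two edges. Deleting $e, f$ from $C$ splits it into a path $P$ in $G[V(K)]$ between $r_K$ and $t$ and a path $P'$ in $G - V(K)$; then $w_F(C) = 0$ together with Lemma~\ref{lem:circuit} makes every edge of $P$ allowed in $(G, T)$.

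It remains to show $w_F(P) = 0$. From $w_F(C) = 0$, together with $w_F(e) = -1$ and $w_F(f) = 1$ (the latter because $f \ne e$ and $\parcut{G}{K} \cap F = \{e\}$), I obtain $w_F(P) + w_F(P') = 0$. The concatenation $e + P' + f$ is a round ear path relative to $K$, so Lemma~\ref{lem:exext}(i) gives $w_F(P') \ge 0$ and hence $w_F(P) \le 0$. For the reverse bound, Theorem~\ref{thm:neigh2dist} together with $e, f \in \parcut{G}{K}$ places both $r_K$ and $t$ in $V(K) \cap \levelgtr{G}{T}{r}{i}$ for the index $i$ with $K \in \laycompgtr{G}{T}{r}{i}$, and Lemma~\ref{lem:exext}(ii) makes this set extreme in $(G, T)$, giving $w_F(P) \ge \distgtf{G}{T}{F}{r_K}{t} \ge 0$. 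The two bounds collapse to $w_F(P) = 0$, and $P$ is the desired path.

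The main obstacle is recognizing that two opposing bounds on $w_F(P)$ are both needed: an upper bound coming from the round-ear-path estimate of Lemma~\ref{lem:exext}(i) applied to $e + P' + f$ (this is precisely where the $-1, +1$ contributions of $e$ and $f$ cancel productively), and a lower bound from extremeness of the common level set containing $r_K$ and $t$. The circuit $C$ is crafted so that $w_F(C) = 0$ forces a tight relationship between $w_F(P)$ and $w_F(P')$, which is what allows the two bounds to collide.
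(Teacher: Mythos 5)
Your proof is correct, but it follows a genuinely different route from the paper's. The paper proceeds as follows: it takes an $F$-shortest path $Q$ in $(G,T)_F[K]$ between $r_K$ and $t$, deduces $w_F(Q)=0$ directly from Theorem~\ref{thm:seboprimal} (since $r_K$ and $t$ lie in the same level $V(K)\cap\levelgtr{G}{T}{r}{i}$), and then shows that every edge of $Q$ is allowed by observing that $(F'\setminus E(K))\cup\bigl((F\cap E(K))\Delta E(Q)\bigr)$ is itself a minimum join of $(G,T)$ that contains $E(Q)\setminus F$. You instead extract a concrete path $P$ from a circuit in $F\Delta F'$, deduce $w_F(C)=0$ from the two minimum joins, obtain $w_F(P)\le 0$ from the round-ear-path bound of Lemma~\ref{lem:exext}~(i) applied to $e+P'+f$ and $w_F(P)\ge 0$ from extremeness of the level set (Lemma~\ref{lem:exext}~(ii)), and then invoke Lemma~\ref{lem:circuit}. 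The paper's route is shorter because Theorem~\ref{thm:seboprimal} gives $w_F(Q)=0$ in one stroke, whereas yours is more self-contained in exploiting the symmetric-difference structure together with the extremeness machinery just established. One small gap to patch: after excluding $f=e$, you should also dispatch the degenerate case $r_K=t$ (which can occur even with $f\ne e$, e.g.\ when $e$ and $f$ share their $K$-side end); there $e+P'+f$ is a closed walk rather than a round ear path, so Lemma~\ref{lem:exext}~(i) does not apply as stated, but $P$ is then a single vertex and the conclusion is immediate.
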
 
\begin{proof} 
Let $Q$ be an $F$-shortest path between $r_K$ and $t$ in $(G, T)_F[K]$. Theorem~\ref{thm:seboprimal} implies $w_F(Q) = 0$. 
Let $F'$ be a minimum join of $(G, T)$ with $f \in F'$. 
Then, $(F'\setminus E(K)) \cup ( (F\cap E(K)) \Delta E(Q) )$ is a minimum join of $(G, T)$ 
 that contains  $E(Q)\setminus F$. 
 Therefore, every edge of $Q$ is allowed, and thus $Q$ is a desired path. 
\end{proof}

\begin{lemma} \label{lem:neigh2sole}
Let $(G, T)$ be a bipartite graft, and let $r\in V(G)$. 
If $S_1$ and $S_2$ are distinct members of $\tpart{G}{T}$ with $S_1 \cup S_2 \subseteq \agtr{G}{T}{r}$, 
then $\neicompgtr{G}{T}{r}{S_1} \cap \neicompgtr{G}{T}{r}{S_2} = \emptyset$. 
\end{lemma}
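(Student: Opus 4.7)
The plan is to argue by contradiction. Suppose some connected component $K$ of $G[\dgtr{G}{T}{r}]$ belongs to both $\neicompgtr{G}{T}{r}{S_1}$ and $\neicompgtr{G}{T}{r}{S_2}$. Choose $x_1\in S_1$ and $x_2\in S_2$ together with allowed edges $e_1=x_1t_1$ and $e_2=x_2t_2$ where $t_1,t_2\in V(K)$, and fix a minimum join $F$ of $(G,T)$. The target is to deduce $x_1\gtsim{G}{T}x_2$, which forces $S_1=S_2$ and gives the contradiction.

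First I would verify that $K$ is itself a member of $\laycompall{G}{T}{r}$. Using Theorem~\ref{thm:neigh2dist}, every vertex of $\dgtr{G}{T}{r}$ adjacent to $\agtr{G}{T}{r}$ sits at level $-1$, and chasing the definitions of $\dgtr{G}{T}{r}$ and $\initialgtr{G}{T}{r}$ shows that each connected component of $G[\dgtr{G}{T}{r}]$ coincides with a member of $\laycompgtr{G}{T}{r}{-1}$ lying inside the initial component. Next I would rule out the possibility that $r$ is an endpoint of some edge of $\parcut{G}{K}$: if it were, Theorem~\ref{thm:sebocut}(ii) applied to every minimum join $F'$ in place of $F$ would force $\parcut{G}{K}\cap F'=\emptyset$, contradicting the fact that $e_1\in\parcut{G}{K}$ is allowed. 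Consequently $|\parcut{G}{K}\cap F|=1$ and the hypotheses of Lemma~\ref{lem:neigh2elem} are satisfied.

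Applying Lemma~\ref{lem:neigh2elem} twice, once with $f:=e_1$ and once with $f:=e_2$, produces paths $P_1$ and $P_2$ inside $(G,T)_F[K]$ joining $r_K$ to $t_1$ and to $t_2$ respectively, each of $F$-weight $0$ and consisting entirely of edges allowed in $(G,T)$. Concatenating $e_1$, $P_1$ reversed, $P_2$, and $e_2$ yields a walk from $x_1$ to $x_2$ whose edges are all allowed in $(G,T)$; extracting a subpath certifies that $x_1$ and $x_2$ lie in the same factor-component $C$ of $(G,T)$.

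It remains to pin down $\distgt{G}{T}{x_1}{x_2}$. Lemma~\ref{lem:exext}\ref{item:ext} applied to $\initialgtr{G}{T}{r}$, whose associated index is $0$, identifies $\agtr{G}{T}{r}$ as an extreme set of $(G,T)$, giving $\distgt{G}{T}{x_1}{x_2}\ge 0$. For the matching upper bound, Fact~\ref{fact:dist} combined with the additivity of $\nu$ across factor-components yields $\distgt{G}{T}{x_1}{x_2}=\distgt{C}{T\cap V(C)}{x_1}{x_2}$, and Lemma~\ref{lem:tnonpositive} applied to the factor-connected graft $(C,T\cap V(C))$ bounds this by $0$. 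Hence $\distgt{G}{T}{x_1}{x_2}=0$, and combined with the same-factor-component conclusion this gives $x_1\gtsim{G}{T}x_2$, so $S_1=S_2$, the desired contradiction. The most delicate step I anticipate is the matching upper bound, since it requires reducing the distance computation from the whole $(G,T)$ to the single factor-component $C$; the remaining steps are essentially an assembly of results developed earlier in the paper.
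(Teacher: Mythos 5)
Your proof is correct and follows the same main idea as the paper: apply Lemma~\ref{lem:neigh2elem} with $f$ set to each of $e_1$ and $e_2$ to produce paths of allowed edges inside $K$ through $r_K$, and concatenate with $e_1,e_2$ to place $x_1,x_2$ in a common factor-component $C$. Where you diverge is the endgame. The paper at this point simply invokes Lemma~\ref{lem:fcomp2ar}, which already asserts that $\agtr{G}{T}{r}\cap V(C)$ is exactly one class of $\tpart{G}{T}$ for any factor-component $C$ meeting $\agtr{G}{T}{r}$, giving $S_1=S_2$ immediately; you instead re-derive $x_1\gtsim{G}{T}x_2$ from scratch using Lemma~\ref{lem:exext} for the lower bound and Lemma~\ref{lem:tnonpositive} for the upper bound. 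That is essentially unfolding the proof of Lemma~\ref{lem:fcomp2ar}, so while your argument is sound it duplicates work the paper already has in hand; the advantage is that it makes visible the two bounding inequalities that together force equality.

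Two smaller points. The verification that $r\notin V(K)$ is immediate and does not need Theorem~\ref{thm:sebocut}: $r\in\levelgtr{G}{T}{r}{0}$, while $V(K)\subseteq\dgtr{G}{T}{r}$ which is by definition disjoint from $\levelgtr{G}{T}{r}{0}$. And your justification of the upper bound via ``additivity of $\nu$ across factor-components'' slightly overstates what is available, since that additivity applies to $\nu(G,T)$ but the factor-components of $(G,T\Delta\{x_1,x_2\})$ need not coincide with those of $(G,T)$, so it does not directly give the equality $\distgt{G}{T}{x_1}{x_2}=\distgt{C}{T\cap V(C)}{x_1}{x_2}$. The direction you actually need, however, is easy and should be stated that way: because $\parcut{G}{C}$ contains no allowed edges, $\parcut{G}{C}\cap F=\emptyset$ and $F\cap E(C)$ is a minimum join of $(C,T\cap V(C))$; any path in $C$ is a path in $G$ of the same $F$-weight, so $\distgt{G}{T}{x_1}{x_2}\le\distgt{C}{T\cap V(C)}{x_1}{x_2}\le 0$ by Lemma~\ref{lem:tnonpositive}.
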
 
\begin{proof} 
Suppose that there exists $K \in \neicompgtr{G}{T}{r}{S_1} \cap \neicompgtr{G}{T}{r}{S_2}$. 
Lemma~\ref{lem:neigh2elem} implies that $S_1$ and $S_2$ are contained in the same factor-component. 
This contradicts Lemma~\ref{lem:fcomp2ar}. 
The lemma is proved. 
\end{proof}

\subsubsection{Definition of Critical Sets} 

We define two concepts called negative and critical sets. 
Let $(G, T)$ be a graft, let $F$ be a minimum join of $(G, T)$, and let $S\in \gtpart{G}{T}$. 

\begin{definition} 
We call $X\subseteq V(G)\setminus S$ an {\em $F$-negative set} for $S$ 
if, for every $x\in X$, 
there exists a vertex $y \in S$ such that 
there is a path between $x$ and $y$ with negative $F$-weight whose vertices except $y$ are contained in $X$. 
\end{definition}

Lemma~\ref{lem:neg2cr} can easily be confirmed. 

\begin{lemma} \label{lem:neg2cr} 
Let $(G, T)$ be a bipartite graft, let $F$ be a minimum join, and let $S\in \gtpart{G}{T}$.  
Then, there exists the maximum $F$-negative set for $S$.  
\end{lemma}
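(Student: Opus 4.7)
The plan is to show that the family of $F$-negative sets for $S$ is closed under arbitrary unions, from which the existence of a unique maximum element follows immediately. Note that the empty set is vacuously $F$-negative for $S$, so the family is nonempty; hence it suffices to establish the union-closure property and then take the union of the entire family.

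The key observation is that $F$-negativity is a \emph{local} condition witnessed individually by each element of the set, and the witnessing path for $x \in X$ is allowed to use vertices of $X$ itself as its interior. This makes the property monotone in $X$ in the following sense: if a path $P$ from $x$ to $y \in S$ has all interior vertices in some $X_i$, it certainly has all interior vertices in any superset $\bigcup_{j \in I} X_j \supseteq X_i$. Concretely, let $\{X_i\}_{i \in I}$ be an arbitrary family of $F$-negative sets for $S$, and set $X := \bigcup_{i \in I} X_i$. Then $X \subseteq V(G) \setminus S$ because each $X_i$ is. For any $x \in X$, pick some $i \in I$ with $x \in X_i$; the $F$-negativity of $X_i$ supplies a $y \in S$ and a path $P$ between $x$ and $y$ with $w_F(P) < 0$ whose vertices other than $y$ lie in $X_i \subseteq X$. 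Hence $X$ is itself $F$-negative for $S$.

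Having established union-closure, the maximum $F$-negative set for $S$ is obtained by taking the union of the family of all $F$-negative sets for $S$. This union is $F$-negative by the preceding paragraph and clearly contains every $F$-negative set, so it is the unique maximum.

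There is no substantive obstacle here: the argument is purely formal and relies only on unwinding the definition. The only point worth being careful about is the quantifier structure in the definition of $F$-negativity, which permits the witnessing endpoint $y \in S$ and the witnessing path to depend on $x$; this dependency is preserved when passing to a union, so no compatibility between the choices across different $X_i$ is required.
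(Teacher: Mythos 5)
Your argument is correct: $F$-negativity is witnessed pointwise by paths whose interior vertices lie in the set, so the family of $F$-negative sets for $S$ is closed under union and contains $\emptyset$, and the union of the whole family is the unique maximum. The paper states this lemma without proof (``can easily be confirmed''), and your union-closure argument is exactly the routine verification the authors had in mind.
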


\begin{definition} 
Under Lemma~\ref{lem:neg2cr},  
we call the maximum $F$-negative set the {\em $F$-critical set} for $S$ and denoted this set by $\coupgtf{G}{T}{F}{S}$.  
\end{definition}

\subsubsection{Critical Sets and Odd Components}

We provide and prove Lemmas~\ref{lem:neiset2coup}, \ref{lem:nonallowed2dist}, and \ref{lem:coup2neiset}  and then derive Lemma~\ref{lem:dr2char}.

\begin{lemma} \label{lem:neiset2coup} 
Let $(G, T)$ be a bipartite graft, and let $r\in V(G)$.  
Let $S$ be a member of $\tpart{G}{T}$ with $S \subseteq \agtr{G}{T}{r}$.  
Then,  $\neisetgtr{G}{T}{r}{S} \subseteq \coupgtf{G}{T}{F}{S}$ holds. 
\end{lemma}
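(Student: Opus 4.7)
The plan is to show that $\neisetgtr{G}{T}{r}{S}$ is itself an $F$-negative set for $S$; the inclusion into the maximum such set $\coupgtf{G}{T}{F}{S}$ then follows by the definition of the $F$-critical set. Concretely, I fix $x \in V(K)$ for some $K \in \neicompgtr{G}{T}{r}{S}$ and aim to exhibit a path from $x$ to a vertex $y \in S$ whose $F$-weight is strictly negative and whose interior lies in $V(K) \subseteq \neisetgtr{G}{T}{r}{S}$.

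To bring Seb\"o's machinery to bear, I first pass to the subgraft $(G_0, T_0) := (G, T)_F[\initialgtr{G}{T}{r}]$. Theorem~\ref{thm:sebocut} applied to the capital component $\initialgtr{G}{T}{r}$ gives $\parcut{G}{\initialgtr{G}{T}{r}} \cap F = \emptyset$, so $F_0 := F \cap E(\initialgtr{G}{T}{r})$ is a minimum join of $(G_0, T_0)$; Theorem~\ref{thm:path2cut}(i) then shows that the $F_0$-distances from $r$ in $(G_0, T_0)$ agree with the $F$-distances in $(G, T)$ on the initial component. Consequently $(G_0, T_0)$ is primal with respect to $r$, and $\dgtr{G}{T}{r}$ is exactly the set of vertices of $G_0$ at distance at most $-1$ from $r$, so every connected component of $G[\dgtr{G}{T}{r}]$---in particular $K$---is a distance component of $(G_0, T_0)$ that does not contain $r$.

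Next, Theorem~\ref{thm:sebocut}(i) inside $(G_0, T_0)$ supplies a unique edge $e = r_K s \in \parcut{G_0}{K} \cap F_0$ with $r_K \in V(K)$. Since $K$ is a whole connected component of $G[\dgtr{G}{T}{r}]$, the vertex $s$ cannot lie in $\dgtr{G}{T}{r}$, so $s \in \agtr{G}{T}{r}$ and belongs to some class $S' \in \tpart{G}{T}$ with $S' \subseteq \agtr{G}{T}{r}$. The edge $e$ lies in $F$ and is therefore allowed, so $K \in \neicompgtr{G}{T}{r}{S'}$; combining this with the hypothesis $K \in \neicompgtr{G}{T}{r}{S}$ and Lemma~\ref{lem:neigh2sole} forces $S' = S$, so $s \in S$.

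Theorem~\ref{thm:seboprimal} further guarantees that $(G, T)_F[K]$ is primal with respect to $r_K$, so any $F$-shortest path $Q$ from $x$ to $r_K$ inside $(G, T)_F[K]$ satisfies $w_F(Q) \le 0$ with $V(Q) \subseteq V(K)$. Concatenating $Q$ with the edge $e$ yields a path from $x$ to $s \in S$ whose interior is contained in $V(K)$ and whose $F$-weight is at most $-1$, completing the verification that $\neisetgtr{G}{T}{r}{S}$ is $F$-negative for $S$. The step I expect to be the main obstacle is the initial bookkeeping: one must verify that $(G_0, T_0)$ inherits the $F$-distances and the minimum join from $(G, T)$ and that a component $K$ of $G[\dgtr{G}{T}{r}]$ genuinely appears as a distance component of $(G_0, T_0)$ to which Theorems~\ref{thm:sebocut} and \ref{thm:seboprimal} can be applied; once this reduction is secured, the construction of the witnessing path is direct.
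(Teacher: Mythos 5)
Your proof is correct and follows essentially the same route as the paper's: locate the unique $F$-edge $e$ of $\parcut{G}{K}$ via Theorem~\ref{thm:sebocut}, identify its outer end as a vertex of $S$ via Lemma~\ref{lem:neigh2sole}, and concatenate $e$ with the nonpositive-weight paths to $r_K$ supplied by the primality of $(G,T)_F[K]$ (Theorem~\ref{thm:seboprimal}) to witness $F$-negativity. Your preliminary reduction to $(G_0,T_0)=(G,T)_F[\initialgtr{G}{T}{r}]$ is sound but not needed as a separate step, since each $K\in\neicompgtr{G}{T}{r}{S}$ is already a member of $\laycompgtr{G}{T}{r}{-1}$ in $(G,T)$ itself, which is how the paper applies Theorems~\ref{thm:sebocut} and \ref{thm:seboprimal} directly.
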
 
\begin{proof} 
Let $F$ be a minimum join of $(G, T)$. 
Let $K \in \neicompgtr{G}{T}{r}{S}$. 
Under Theorem~\ref{thm:sebocut}, let $e \in \parcut{G}{K} \cap F$ and $r_K \in \partial_G(e)\cap V(K)$. 
Theorem~\ref{thm:seboprimal} implies that the graft $(G, T)_F[K]$ is  primal with respect to $r_K$, 
for which $F \cap E(K)$ is a minimum join.  
Hence, for every $x\in V(K)$, 
there is a path $P$ between $x$ and  $r_K$ with $w_F(P) \le 0$ and $V(P) \subseteq V(K)$. 
 Lemma~\ref{lem:neigh2sole} implies that 
 $e$ joins $r_K$ and a vertex $s \in S$. 
Therefore, $P+ e$ is a path between $x$ and $s$ with 
$w_F(P + e) < 0$ and $V(P)\setminus \{s\} \subseteq V(K)$. 
Thus, we obtain $V(K)\subseteq \coupgtf{G}{T}{F}{S}$. 
This proves the lemma. 
\end{proof}

\begin{lemma}  \label{lem:nonallowed2dist} 
Let $(G, T)$ be a bipartite graft. 
Let $e$ be an edge between distinct vertices $x$ and $y$.  
If $e$ is not allowed, then $\distgt{G}{T}{x}{y} = 1$. 
\end{lemma}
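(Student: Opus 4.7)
The plan is to fix any minimum join $F$ of $(G,T)$, verify that $e\notin F$ (because $e$ is not allowed), and then bound $\distgt{G}{T}{x}{y}$ from above and below by $1$.

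For the upper bound, I would simply observe that the edge $e$ viewed as a single-edge path between $x$ and $y$ has $F$-weight $w_F(e) = 1$, since $e\notin F$. This immediately yields $\distgt{G}{T}{x}{y}\le 1$.

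For the lower bound, I would take an arbitrary path $P$ between $x$ and $y$ and aim to show $w_F(P)\ge 1$. The key idea is to form the circuit $C := P + e$ (which is a genuine simple circuit as soon as $P\neq e$; the case $P=e$ was already handled). Since $G$ is bipartite, $|E(C)|$ is even, so $w_F(C) = |E(C)\setminus F| - |E(C)\cap F|$ is an even integer. Lemma~\ref{lem:minimumjoin} gives $w_F(C)\ge 0$, so $w_F(C)\in\{0,2,4,\ldots\}$. The crucial step is to rule out $w_F(C)=0$: if $w_F(C)=0$, then Lemma~\ref{lem:circuit} asserts that $F\Delta E(C)$ is again a minimum join, and since $e\notin F$ while $e\in E(C)$, this new minimum join contains $e$, contradicting the hypothesis that $e$ is not allowed. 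Hence $w_F(C)\ge 2$, which gives $w_F(P) = w_F(C) - w_F(e) = w_F(C) - 1 \ge 1$. Taking the minimum over all paths $P$ yields $\distgt{G}{T}{x}{y}\ge 1$.

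Combining the two bounds gives the claimed equality. The only subtle point — scarcely an obstacle — is the bipartiteness parity argument that forces $w_F(C)$ to be even, which together with the Lemma~\ref{lem:circuit} exchange argument is what upgrades the trivial bound $w_F(C)\ge 0$ to the useful bound $w_F(C)\ge 2$.
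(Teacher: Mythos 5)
Your proof is correct and rests on exactly the same key move as the paper: form the circuit $C = P + e$, and use Lemma~\ref{lem:circuit} to conclude that $w_F(C)=0$ would force $e$ to be allowed, a contradiction. The only difference is presentational---the paper reduces to the case $w_F(C)=0$ by invoking Theorem~\ref{thm:neigh2dist} (which pins $\distgt{G}{T}{x}{y}$ to $\pm 1$), whereas you obtain the evenness of $w_F(C)$ directly from bipartiteness.
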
 
\begin{proof} 
Assume $\distgt{G}{T}{x}{y} \neq 1$.  Then, Theorem~\ref{thm:neigh2dist} implies $\distgt{G}{T}{x}{y} = -1$.   
Let $F$ be a minimum join of $(G, T)$, 
and let $P$ be an $F$-shortest path between $x$ and $y$. 
Clearly, $e \not\in F$ and $e \not\in E(P)$ hold. 
Hence, $P + e$ is a circuit whose $F$-weight is $0$. 
This implies from Lemma~\ref{lem:circuit} that $e$ is allowed. 
The lemma is proved. 
\end{proof}

\begin{lemma} \label{lem:coup2neiset} 
Let $(G, T)$ be a bipartite graft, and let $r\in V(G)$.  Let $F$ be a minimum join of $(G, T)$. 
Let $S$ be a member of $\tpart{G}{T}$ with $S \subseteq \agtr{G}{T}{r}$. 
Then, $\coupgtf{G}{T}{F}{S} \subseteq \neisetgtr{G}{T}{r}{S}$ holds. 
\end{lemma}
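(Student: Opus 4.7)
The plan is to argue by strong induction on the length $k$ of a minimum-length witness path $P = x = u_0, u_1, \ldots, u_k = y$ for $x \in \coupgtf{G}{T}{F}{S}$, i.e.\ $y \in S$, $w_F(P) < 0$, and $V(P) \setminus \{y\} \subseteq \coupgtf{G}{T}{F}{S}$.

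For the base case $k = 1$, the single edge $xy$ must lie in $F$ in order to achieve $w_F(P) = -1$. Theorem~\ref{thm:sebocut}~(ii) tells us no $F$-edge crosses out of the initial component, so $xy$ lies entirely inside $\initialgtr{G}{T}{r}$ and $x \in V(\initialgtr{G}{T}{r})$; Theorem~\ref{thm:neigh2dist} combined with $\distgt{G}{T}{r}{x} \le 0$ forces $\distgt{G}{T}{r}{x} = -1$, placing $x \in \dgtr{G}{T}{r}$. The allowed edge $xy$ directly exhibits $x$'s component of $G[\dgtr{G}{T}{r}]$ as a member of $\neicompgtr{G}{T}{r}{S}$, so $x \in \neisetgtr{G}{T}{r}{S}$.

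For the inductive step ($k \ge 2$), I concentrate on the neighbor $u_{k-1}$ of $y$ on $P$. Because $u_{k-1} \in \coupgtf{G}{T}{F}{S}$ we have $\distgt{G}{T}{y}{u_{k-1}} < 0$; Theorem~\ref{thm:neigh2dist} applied with root $y$ then forces this distance to be exactly $-1$, and the contrapositive of Lemma~\ref{lem:nonallowed2dist} tells us $u_{k-1}y$ is allowed. If $u_{k-1}y \in F$, the edge itself is a length-$1$ witness for $u_{k-1}$, so the induction hypothesis places $u_{k-1} \in \neisetgtr{G}{T}{r}{S}$. If $u_{k-1}y \notin F$, an exchange argument is required: pick another minimum join $F'$ containing $u_{k-1}y$, decompose $F \Delta F'$ into $w_F$-zero alternating circuits via Lemma~\ref{lem:circuit}, and reroute the tail of $P$ along the circuit through $u_{k-1}y$ to obtain a modified witness $\tilde{P}$ for $x$ whose final edge lies in $F$, reducing to the previous subcase.

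With $u_{k-1} \in \neisetgtr{G}{T}{r}{S} \subseteq \dgtr{G}{T}{r}$ in hand, Lemma~\ref{lem:ar} gives $\distgt{G}{T}{r}{x} \le \distgt{G}{T}{y}{x} \le w_F(P) < 0$, while extremality of $\agtr{G}{T}{r}$ (Lemma~\ref{lem:exext}) rules out $x \in \agtr{G}{T}{r}$. The edge $xu_{k-1}$ then keeps $x$ in the same connected component of $G[\laylegtr{G}{T}{r}{0}]$ as $u_{k-1}$, namely the initial one, so $x \in \dgtr{G}{T}{r}$; since $xu_{k-1} \in E(G[\dgtr{G}{T}{r}])$, $x$ inherits $u_{k-1}$'s component of $G[\dgtr{G}{T}{r}]$, which already carries an allowed edge to $S$, yielding $x \in \neisetgtr{G}{T}{r}{S}$. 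The main obstacle is the exchange step in the subcase $u_{k-1}y \notin F$: one must verify that $\tilde{P}$ retains strictly negative $F$-weight and that all of its interior vertices remain inside $\coupgtf{G}{T}{F}{S}$, which requires careful tracking of the interaction between the original witness and the alternating circuit used for rerouting.
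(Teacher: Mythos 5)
Your argument has two genuine gaps, and the second is more fundamental than the one you flag yourself.

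First, the exchange argument for the subcase $u_{k-1}y \notin F$ is left as a sketch and, as you admit, you have not verified that the rerouted walk $\tilde{P}$ is still a path, still has negative $F$-weight, and still has all interior vertices inside $\coupgtf{G}{T}{F}{S}$. None of these is automatic, and Lemma~\ref{lem:circuit} does not by itself decompose $F\Delta F'$ into zero-weight circuits. In fact this whole branch of your argument is unnecessary: once you know $\distgt{G}{T}{y}{u_{k-1}} = -1$ and $u_{k-1}y$ is allowed, you can apply Lemma~\ref{lem:ar} (with $y \in S \subseteq \agtr{G}{T}{r}$) to get $\distgt{G}{T}{r}{u_{k-1}} \le -1$, and Theorem~\ref{thm:neigh2dist} then pins it at $-1$; adjacency to $y \in V(\initialgtr{G}{T}{r})$ places $u_{k-1}$ in $V(\initialgtr{G}{T}{r})$ and hence in $\dgtr{G}{T}{r}$, and the allowed edge $u_{k-1}y$ already witnesses that $u_{k-1}$'s component of $G[\dgtr{G}{T}{r}]$ belongs to $\neicompgtr{G}{T}{r}{S}$. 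No recursion and no join exchange is needed for $u_{k-1}$, regardless of whether $u_{k-1}y \in F$.

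Second, in the last paragraph you refer to ``the edge $xu_{k-1}$'' to transport $x$ into $u_{k-1}$'s component of $G[\dgtr{G}{T}{r}]$. This is only an edge when $k=2$; for $k \ge 3$ the vertices $x=u_0$ and $u_{k-1}$ are not adjacent, and the step as written does not apply. To fix it you must show the whole subpath $xPu_{k-1}$ lies in $\dgtr{G}{T}{r}$: each $u_j$ (for $0 \le j \le k-1$) lies in $\coupgtf{G}{T}{F}{S}$ and so by definition has $\distgt{G}{T}{y'}{u_j}<0$ for some $y'\in S$, whence Theorem~\ref{thm:unit4dist} and Lemma~\ref{lem:ar} give $\distgt{G}{T}{r}{u_j}<0$; then propagate membership in $V(\initialgtr{G}{T}{r})$ backwards along $P$ from $u_{k-1}$. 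Once that is done, the induction on $k$ becomes superfluous as well; your proof idea reduces to a direct argument on the witness path. The paper instead uses a sandwich argument, first excluding $\agtr{G}{T}{r}$ and $\cgtr{G}{T}{r}$ from $\coupgtf{G}{T}{F}{S}$ by examining the boundary $\parNei{G}{\initialgtr{G}{T}{r}}$ and then excluding the components of $G[\dgtr{G}{T}{r}]$ not adjacent to $S$ by allowed edges, in each case showing that the relevant ``barrier'' vertices cannot belong to the critical set because any witness path would have to pass through them.
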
 
\begin{proof} 

The following claim is easily implied from Lemma~\ref{lem:exext}. 

\begin{pclaim} \label{claim:ar2disjoint}  
$\coupgtf{G}{T}{F}{S} \cap \agtr{G}{T}{r} = \emptyset$.  
\end{pclaim}

The next claim can be proved from Lemma~\ref{lem:ar}.

\begin{pclaim} \label{claim:cr2disjoint} 
$\coupgtf{G}{T}{F}{S} \cap  \cgtr{G}{T}{r} = \emptyset$. 
\end{pclaim} 
\begin{proof} 
Let $x\in \parNei{G}{\initialgtr{G}{T}{r}}$. 
Theorem~\ref{thm:neigh2dist} implies  $\distgtf{G}{T}{F}{r}{x} = 1$.   
Lemma~\ref{lem:ar} further implies $\distgtf{G}{T}{F}{s}{x} = 1$ for every $s \in S$.        
Therefore, $x \not\in \coupgtf{G}{T}{F}{S}$ follows. 
Accordingly, $\parNei{G}{\initialgtr{G}{T}{r}} \cap \coupgtf{G}{T}{F}{S} = \emptyset$. 
This implies the claim. 
\end{proof} 

From Claims~\ref{claim:ar2disjoint} and \ref{claim:cr2disjoint}, we now have $\coupgtf{G}{T}{F}{S} \subseteq \dgtr{G}{T}{r}$. 
The next claim is implied from Theorem~\ref{thm:unit4dist} and Lemma~\ref{lem:nonallowed2dist}. 

\begin{pclaim} \label{claim:dr2disjoint} 
$\dgtr{G}{T}{r}\setminus \neisetgtr{G}{T}{r}{S} = \emptyset$. 
\end{pclaim} 
\begin{proof} 
Let $K \in \conng{G}{\dgtr{G}{T}{r}} \setminus \neicompgtr{G}{T}{r}{S}$. 
Note that every path between vertices in $S$ and $V(K)$ must contain a vertex from the set $( \parNei{G}{K} \setminus S)  \cup (\parNei{G}{S}\cap V(K))$. 
We prove the claim by proving that this set is disjoint from $\coupgtf{G}{T}{F}{S}$.  

Claim~\ref{claim:ar2disjoint} implies  $(\parNei{G}{K} \setminus S) \cap \coupgtf{G}{T}{F}{S} = \emptyset$. 
Next, let $y \in \parNei{G}{S}\cap V(K)$, and let $x\in S$ be the vertex with $xy \in E(G)$. 
The edge $xy$ is not allowed; 
hence, Lemma~\ref{lem:nonallowed2dist} implies $\distgtf{G}{T}{F}{x}{y} > 0$. 
Theorem~\ref{thm:unit4dist} further implies $y \not\in \coupgtf{G}{T}{F}{S}$.  
This implies $\parNei{G}{S}\cap V(K)  \cap \coupgtf{G}{T}{F}{S} = \emptyset$. 
Thus, we obtain the claim. 
\end{proof} 

From Claims~\ref{claim:ar2disjoint}, \ref{claim:cr2disjoint}, and \ref{claim:dr2disjoint}, 
the lemma is proved.  
\end{proof}

Lemmas~\ref{lem:neiset2coup} and \ref{lem:coup2neiset} imply Lemma~\ref{lem:dr2char}.

\begin{lemma} \label{lem:dr2char} 
Let $(G, T)$ be a bipartite graft, and let $r\in V(G)$. 
Let $S$ be a member of $\tpart{G}{T}$ with $S \subseteq \agtr{G}{T}{r}$. 
Then, $\neisetgtr{G}{T}{r}{S} = \coupgtf{G}{T}{F}{S}$.   
\end{lemma}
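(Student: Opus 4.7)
The plan is essentially a bookkeeping step: the statement asserts a set equality, and the two inclusions have already been established as the two preceding lemmas. So my proof would simply combine them.

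First I would invoke Lemma~\ref{lem:neiset2coup} to get the forward inclusion $\neisetgtr{G}{T}{r}{S} \subseteq \coupgtf{G}{T}{F}{S}$. This is the ``easier'' direction conceptually: starting from any vertex $x$ of an odd component $K \in \neicompgtr{G}{T}{r}{S}$, one uses the primality of $(G, T)_F[K]$ with respect to $r_K$ (Theorem~\ref{thm:seboprimal}) to find a path inside $K$ from $x$ to $r_K$ of nonpositive $F$-weight, and then prepends the unique edge of $\parcut{G}{K} \cap F$ (whose other end lies in $S$ by Lemma~\ref{lem:neigh2sole}) to obtain a path of strictly negative $F$-weight witnessing $x \in \coupgtf{G}{T}{F}{S}$.

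Next I would invoke Lemma~\ref{lem:coup2neiset} to get the reverse inclusion $\coupgtf{G}{T}{F}{S} \subseteq \neisetgtr{G}{T}{r}{S}$. The substantive content is already contained in the three exclusion claims inside that lemma: extremality of $\agtr{G}{T}{r}$ (via Lemma~\ref{lem:exext}) handles $\agtr{G}{T}{r}$, the uniform distance bound from $S$ (Lemma~\ref{lem:ar} together with Theorem~\ref{thm:neigh2dist}) handles the neighbors of $\initialgtr{G}{T}{r}$ and thus all of $\cgtr{G}{T}{r}$, and finally the non-allowed edges crossing from $S$ into a non-neighbor odd component $K$ force positive distance by Lemma~\ref{lem:nonallowed2dist}, which via Theorem~\ref{thm:unit4dist} excludes the rest of $\dgtr{G}{T}{r} \setminus \neisetgtr{G}{T}{r}{S}$.

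Since each inclusion is already a named lemma, the main obstacle has already been cleared in the preceding subsection; there is nothing left to do here. My proof therefore consists of a single sentence citing Lemmas~\ref{lem:neiset2coup} and \ref{lem:coup2neiset} and concluding the equality.
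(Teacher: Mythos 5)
Your proposal matches the paper exactly: Lemma~\ref{lem:dr2char} is stated there as an immediate consequence of Lemmas~\ref{lem:neiset2coup} and \ref{lem:coup2neiset}, which give the two inclusions. Your one-sentence combination of the two named lemmas is precisely the intended proof.
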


From Lemma~\ref{lem:dr2char},  
it can be observed that $\neisetgtr{G}{T}{r}{S}$ and $\coupgtf{G}{T}{F}{S}$ do not depend on the choice of  root $r$ or  minimum join $F$. 
Therefore, 
hereinafter 
we simply denote $\neicompgtr{G}{T}{r}{S}$ and $\neisetgtr{G}{T}{r}{S}$ by $\neicompgt{G}{T}{S}$ and $\neisetgt{G}{T}{S}$, respectively, 
and denote $\coupgtf{G}{T}{F}{S}$ by $\coupgt{G}{T}{S}$.

\subsection{Theorem for Initial Components}

Lemmas~\ref{lem:ar2partition} and \ref{lem:dr2char} imply the next theorem. 

\begin{theorem} \label{thm:icomp} 
Let $(G, T)$ be a bipartite graft, and let $r\in V(G)$. 
Then, there exist $S_1, \ldots, S_k \in \tpart{G}{T}$, where $k\ge 1$, 
such that 
\begin{rmenum} 
\item $\agtr{G}{T}{r} = S_1 \dot\cup \cdots \dot\cup S_k$, 
\item $\dgtr{G}{T}{r}  = \coupgt{G}{T}{S_1} \dot\cup \cdots \dot\cup  \coupgt{G}{T}{S_k}$,  and  
\item 
$\conng{G}{\dgtr{G}{T}{r}} = \conng{G}{\coupgt{G}{T}{S_1}} \dot\cup \cdots \dot\cup \conng{G}{\coupgt{G}{T}{S_k}}$. 
\end{rmenum} 
\end{theorem}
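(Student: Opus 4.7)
The plan is to derive the theorem as a direct corollary of Lemmas~\ref{lem:ar2partition}, \ref{lem:dr2char}, and \ref{lem:neigh2sole}, together with a short coverage argument for $\dgtr{G}{T}{r}$. Lemma~\ref{lem:ar2partition} immediately supplies classes $S_1, \ldots, S_k \in \tpart{G}{T}$ whose disjoint union is $\agtr{G}{T}{r}$, with $k \ge 1$ because $r$'s own class appears; this is statement (i).

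For (ii), Lemma~\ref{lem:dr2char} gives $\coupgt{G}{T}{S_j} = \neisetgt{G}{T}{S_j}$, which is the vertex set of the family $\neicompgt{G}{T}{S_j}$ of components of $G[\dgtr{G}{T}{r}]$. Lemma~\ref{lem:neigh2sole} makes these families pairwise disjoint for distinct $S_j$, so the sets $\coupgt{G}{T}{S_j}$ are pairwise disjoint. It then remains to verify the covering inclusion $\dgtr{G}{T}{r} \subseteq \bigcup_j \coupgt{G}{T}{S_j}$, i.e., that every connected component $K$ of $G[\dgtr{G}{T}{r}]$ lies in some $\neicompgt{G}{T}{S_j}$. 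Since $V(K) \subseteq V(\initialgtr{G}{T}{r})$ and the initial component is connected with $\agtr{G}{T}{r} \neq \emptyset$, there is at least one edge between $V(K)$ and $\agtr{G}{T}{r}$; the remaining task is to produce an \emph{allowed} such edge. I plan to obtain one by fixing $v \in V(K)$, passing to its factor-component $C$, and invoking Lemma~\ref{lem:fcomp2ar}: if $V(C) \cap \agtr{G}{T}{r}$ equals some $S_j$, then any allowed-edge path within $C$ from $v$ to $S_j$ remains inside $V(C) \setminus S_j \subseteq \dgtr{G}{T}{r}$ until its last step, and that last edge is an allowed edge from $V(K)$ to $S_j$.

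Once (ii) is in place, statement (iii) follows immediately: by definition each $\neicompgt{G}{T}{S_j}$ is a subfamily of $\conng{G}{\dgtr{G}{T}{r}}$, these subfamilies partition the whole by coverage combined with Lemma~\ref{lem:neigh2sole}, and Lemma~\ref{lem:dr2char} identifies them with $\conng{G}{\coupgt{G}{T}{S_j}}$. The main obstacle I anticipate is the coverage step, specifically ruling out the possibility that some $v \in \dgtr{G}{T}{r}$ lies in a factor-component disjoint from $\agtr{G}{T}{r}$ (in which case Lemma~\ref{lem:fcomp2ar} does not directly apply). I expect to handle this by a short argument combining Lemma~\ref{lem:tnonpositive} with the constraint $\dgtr{G}{T}{r} \subseteq V(\initialgtr{G}{T}{r})$, perhaps supplemented by Theorem~\ref{thm:sebocut} on the level-$(-1)$ distance component containing $V(K)$, which produces a guaranteed $F$-edge crossing into $\agtr{G}{T}{r}$ that can then be transferred back to $K$ via Theorem~\ref{thm:seboprimal}.
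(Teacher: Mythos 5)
Your overall plan matches what the paper intends: statement (i) is Lemma~\ref{lem:ar2partition}, disjointness of the $\coupgt{G}{T}{S_j}$'s is Lemma~\ref{lem:neigh2sole} combined with Lemma~\ref{lem:dr2char}, and you are right that the paper leaves the covering inclusion $\dgtr{G}{T}{r} \subseteq \bigcup_j \coupgt{G}{T}{S_j}$ implicit. You correctly identify this as the one step needing work, so the proposal is sound in spirit. However, your primary route for coverage is more roundabout than necessary and, as you yourself flag, leaves a real hole: nothing established so far guarantees that the factor-component $C$ of a given $v \in \dgtr{G}{T}{r}$ meets $\agtr{G}{T}{r}$ at all, and Lemma~\ref{lem:fcomp2ar} cannot be invoked until that is settled.

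What you list as a ``supplement'' is in fact the whole argument, and the factor-component detour can be dropped. A connected component $K$ of $G[\dgtr{G}{T}{r}]$ is precisely a member of $\laycompgtr{G}{T}{r}{-1}$ contained in the initial component (its vertices with neighbours outside $K$ are exactly those at distance $-1$, since Theorem~\ref{thm:neigh2dist} forces any neighbour of a vertex at distance $\le -2$ to also lie in $\layler{-1}{r}$ and hence in $K$). Since $r \notin V(K)$, Theorem~\ref{thm:sebocut}(i) gives a unique edge $e \in \parcut{G}{K} \cap F$; being in a minimum join, $e$ is allowed. Its endpoint outside $K$ is at distance $0$ and is adjacent to $V(K) \subseteq V(\initialgtr{G}{T}{r})$, so it lies in $\initialgtr{G}{T}{r}$ and hence in $\agtr{G}{T}{r}$. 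Thus $K \in \neicompgtr{G}{T}{r}{S_j}$ for the class $S_j$ containing that endpoint, which is the coverage you need; Theorem~\ref{thm:seboprimal} plays no role. With this substitution your argument is complete and correct.
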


\section{Rootlization and Root Set Distances} 
\subsection{Rootlization}

\begin{definition} 
Let $(G, T)$ be a graft, and let $X \subseteq V(G)$. 
Let $\hat{G}$ be the graph such that $V(\hat{G}) = V(G) \cup \{r, s\}$, where $r, s\not\in V(G)$,  
and $E(\hat{G}) = E(G) \cup \{rs \} \cup \{ sx : x\in X\}$. 
Let $\hat{T} = T \cup \{r, s\}$. 
We call  the graft $(\hat{G}, \hat{T})$  the {\em rootlization} of $(G, T)$ 
by the {\em mount} $X$,  {\em root} $r$, and  {\em attachment} $s$. 
We also denote $(\hat{G}, \hat{T})$ by $\extend{G}{T}{X}{r}{s}$. 
\end{definition}

\begin{lemma}[Kita~\cite{kita2021bipartite}] \label{lem:extend} 
Let $(G, T)$ be a graft, and let $X \subseteq V(G)$ be an extreme set. 
Let $(\hat{G}, \hat{T})$ be the rootlization of $(G, T)$ by the mount $X$,  root $r$, and  attachment $s$. 
\begin{rmenum} 
\item \label{item:extend:join} A set $\hat{F} \subseteq E(\hat{G})$ is a minimum join if and only if 
$\hat{F}$ is of the form $\hat{F} = F \cup \{rs\}$, where $F$ is a minimum join of $(G, T)$. 
\item \label{item:extend:distance} 
 Let $F$ be a minimum join of $(G, T)$, and let  $\hat{F} = F \cup \{rs\}$. 
Then,  
$\distgtf{\hat{G}}{\hat{T}}{\hat{F}}{r}{y} = \min_{x\in X} \distgtf{G}{T}{F}{x}{y}$ holds for every $y\in V(G)$, 
whereas $\distgtf{\hat{G}}{\hat{T}}{\hat{F}}{r}{s} = -1$. 
Additionally, $P$ is an $\hat{F}$-shortest path in $\hat{G}$ between $r$ and $y$ 
 if and only if $P$ is of the form $P = P' + xs + rs$, where 
 $x\in X$ is a vertex such that  $\distgtf{G}{T}{F}{x}{y}$ is the minimum among every vertex $x\in X$ 
 and $P$ is an $F$-shortest path in $G$ between $x$ and $y$. 
\end{rmenum} 
\end{lemma}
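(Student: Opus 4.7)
The plan is to obtain item (i) via the circuit-weight characterization of minimum joins (Lemma~\ref{lem:minimumjoin}), and then to deduce item (ii) by a direct path analysis that exploits the fact that $r$ has degree one in $\hat{G}$. For the forward direction of (i), given a minimum join $F$ of $(G, T)$, set $\hat{F} := F \cup \{rs\}$ and verify by a parity check that $\hat{F}$ is a join of $(\hat{G}, \hat{T})$. Minimality then follows from Lemma~\ref{lem:minimumjoin} once $w_{\hat{F}}(C) \geq 0$ is established for every circuit $C$ of $\hat{G}$. Because $r$ has degree one, no circuit visits $r$, so each circuit either lies entirely in $G$---in which case $w_{\hat{F}}(C) = w_F(C) \geq 0$ by the minimality of $F$---or has the form $sx_1 + P + x_2 s$ for distinct $x_1, x_2 \in X$ and a simple $x_1$-$x_2$ path $P$ in $G$. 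In the latter case, $w_{\hat{F}}(C) = 2 + w_F(P) \geq 2 + \distgtf{G}{T}{F}{x_1}{x_2} \geq 2$, where the final inequality is the extremality of $X$.

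For the converse direction of (i), let $\hat{F}$ be any minimum join of $(\hat{G}, \hat{T})$. The degree-one condition at $r \in \hat{T}$ forces $rs \in \hat{F}$, and parity at $s \in \hat{T}$ then forces $Y := \{x \in X : sx \in \hat{F}\}$ to be of even cardinality. I would then show $Y = \emptyset$ by a local exchange: for $x_1, x_2 \in Y$ and any $x_1$-$x_2$ path $P$ in $G$, the set $\hat{F}' := \hat{F} \Delta (\{sx_1, sx_2\} \cup E(P))$ is again a join of $(\hat{G}, \hat{T})$ (parity unchanged at every vertex), with $|\hat{F}'| = |\hat{F}| + w_F(P) - 2$; hence the minimality of $\hat{F}$ gives $w_F(P) \geq 2$ for every such $P$. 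Setting $F := \hat{F} \cap E(G)$, Lemma~\ref{lem:minimumjoin} applied to circuits of $\hat{G}$ lying in $G$ shows that $F$ is a minimum join of $(G, T \Delta Y)$; combining the lower bound $w_F(P) \geq 2$ with the extremality of $X$ in $(G, T)$---bridged through Fact~\ref{fact:dist} applied to both $(G, T)$ and $(G, T \Delta Y)$---should force $Y = \emptyset$. Once this is done, $F$ is itself a join of $(G, T)$, and its minimality again follows from the same circuit analysis.

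For item (ii), use (i) to write $\hat{F} = F \cup \{rs\}$. Every path in $\hat{G}$ starting at $r$ must begin with the edge $rs$, since $r$ has no other neighbour. If the path ends at $y \in V(G)$, its second edge is some $sx$ with $x \in X$ and the remainder is an $x$-$y$ path $P'$ in $G$, so the $\hat{F}$-weight is $-1 + 1 + w_F(P') = w_F(P')$; minimizing over all choices of $x \in X$ and $P'$ yields $\distgtf{\hat{G}}{\hat{T}}{\hat{F}}{r}{y} = \min_{x \in X} \distgtf{G}{T}{F}{x}{y}$ together with the asserted form of the $\hat{F}$-shortest paths. For $y = s$, the only $r$-$s$ path in $\hat{G}$ is the single edge $rs$, of weight $-1$. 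The main obstacle in the whole argument is the step $Y = \emptyset$ in the backward direction of (i): the inequality $w_F(P) \geq 2$ furnished by the local exchange must be sharpened into an actual contradiction using extremality, which requires carefully relating the distance structure of $(G, T)$ to that of the shifted graft $(G, T \Delta Y)$, and is the delicate combinatorial heart of the lemma.
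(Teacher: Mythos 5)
Your argument for the forward direction of item (i) and for item (ii) is correct and matches the paper's approach: verify that $\hat{F}=F\cup\{rs\}$ is a join, then check $w_{\hat{F}}(C)\ge 0$ for every circuit $C$ by splitting into circuits avoiding $s$ (weight $\ge 0$ by Lemma~\ref{lem:minimumjoin}) and circuits of the form $sx_1+P+x_2s$ (weight $2+w_F(P)\ge 2$ by extremality of $X$), and for (ii) observe that every $r$--$y$ path starts $r$--$s$--$x$ for some $x\in X$ and hence has $\hat{F}$-weight $w_F(P')$ for an $x$--$y$ path $P'$ in $G$.

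However, the converse direction of item (i) has a genuine gap, which you yourself flag. You correctly force $rs\in\hat{F}$ and introduce $Y=\{x\in X: sx\in\hat{F}\}$ of even cardinality, but your plan to kill $Y$ by passing to $(G,T\Delta Y)$ and invoking Fact~\ref{fact:dist} is not carried out and, as stated, does not obviously close: the exchange gives $w_F(P)\ge 2$ for every $x_1$--$x_2$ path $P$ with $x_1,x_2\in Y$, measured with $F=\hat{F}\cap E(G)$ (a minimum join of $(G,T\Delta Y)$), whereas extremality of $X$ bounds distances with respect to a minimum join of $(G,T)$; these are different joins of different grafts, and the two inequalities point in compatible directions rather than colliding. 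The route the paper takes avoids the shifted graft entirely and reuses the circuit bound you already proved. Fix one minimum join $F_0$ of $(G,T)$ and set $\hat{F}_0:=F_0\cup\{rs\}$, which is a minimum join by the forward direction. For any minimum join $\hat{F}'$ of $(\hat{G},\hat{T})$, the symmetric difference $\hat{F}_0\Delta\hat{F}'$ is a disjoint union of circuits, and each such circuit $C$ satisfies $w_{\hat{F}_0}(C)=0$ (since $w_{\hat{F}_0}(C)\ge 0$, $w_{\hat{F}'}(C)\ge 0$, and $w_{\hat{F}_0}(C)+w_{\hat{F}'}(C)=0$ on $\hat{F}_0\Delta\hat{F}'$). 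But you have already shown that every circuit through $s$ has $\hat{F}_0$-weight at least $2$, so none of these circuits meets $s$; hence $\hat{F}'\cap\parcut{\hat{G}}{s}=\hat{F}_0\cap\parcut{\hat{G}}{s}=\{rs\}$, i.e.\ $Y=\emptyset$, and $\hat{F}'\setminus\{rs\}$ is a join of $(G,T)$ of size $\nu(G,T)$. Equivalently, in the paper's phrasing, no edge between $s$ and $X$ is allowed in $(\hat{G},\hat{T})$ because no circuit through $s$ has $\hat{F}_0$-weight zero. This is the missing step; the rest of your proposal is sound.
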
 
\begin{proof} 
For proving \ref{item:extend:join}, let $F$ be a minimum join of $(G, T)$, and let $\hat{F} := F \cup \{rs\}$. 
Clearly, $\hat{F}$ is a join of $(\hat{G}, \hat{T})$. 
Let $C$ be a circuit in $\hat{G}$. 
If $s\not\in V(C)$ holds, then $C$ is a circuit of $G$, and Lemma~\ref{lem:minimumjoin} implies $w_{\hat{F}}(C) = w_F(C) \ge 0$. 
Assume $s \in V(C)$. 
Then, $C$ is of the form $P + sx + sy$, 
where $x$ and $y$ are vertices from $X$, whereas $P$ is a path in $G$ between $x$ and $y$. 
Because $X$ is extreme, $w_F(P) \ge 0$ holds. 
Hence, $w_{\hat{F}}(C) \ge 2$. 
Therefore, Lemma~\ref{lem:minimumjoin} implies that $\hat{F}$ is a minimum join of $\hat{G}$. 
Furthermore, Lemma~\ref{lem:circuit} implies that no edge between $s$ and $X$ is allowed. 
By Lemma~\ref{lem:minimumjoin} again, this implies that every minimum join of $(\hat{G}, \hat{T})$ is a union of a minimum join of $(G, T)$ and $\{rs\}$. 
The statement \ref{item:extend:join} is proved. 
The statement \ref{item:extend:distance} can easily be confirmed. 
\end{proof}

\begin{lemma} \label{lem:extend2sim} 
Let $(G, T)$ be a graft,  and let $R \subseteq V(G)$ be an extreme set. 
Let $(\hat{G}, \hat{T}):= \extend{G}{T}{R}{r}{s}$.  
Then, the following properties hold: 
\begin{rmenum} 
\item $\tcomp{\hat{G}}{\hat{T}} = \tcomp{G}{T} \cup \{ \hat{G}.  rs  \}$.   
\item $\distgt{G}{T}{x}{y} \ge \distgt{\hat{G}}{\hat{T}}{x}{y}$ holds for every $x, y\in V(G)$. 
\item Accordingly, for every $x, y\in V(G)$, $x \gtsim{\hat{G}}{\hat{T}} y$ implies $x \gtsim{G}{T} y$. 
\end{rmenum} 
\end{lemma}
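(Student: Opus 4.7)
The plan is to handle the three parts in sequence, with part (i) supplying the structural foundation used in (iii).

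For part (i), the approach is to identify the allowed edges of $(\hat{G}, \hat{T})$ explicitly. Lemma~\ref{lem:extend}\ref{item:extend:join} asserts that every minimum join of $(\hat{G}, \hat{T})$ has the form $F \cup \{rs\}$ for some minimum join $F$ of $(G, T)$. Three immediate consequences follow: the edge $rs$ lies in every minimum join and is therefore allowed; every edge $sx$ with $x\in R$ is non-allowed; and an edge of $E(G)$ is allowed in $(\hat{G}, \hat{T})$ if and only if it is allowed in $(G, T)$. Since the only allowed edge incident to $r$ or $s$ is $rs$ itself, the edge $rs$ constitutes a factor-component of its own, and the remaining factor-components of $(\hat{G}, \hat{T})$ coincide with those of $(G, T)$.

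For part (ii), fix a minimum join $F$ of $(G, T)$ and set $\hat{F} := F \cup \{rs\}$, which is a minimum join of $(\hat{G}, \hat{T})$. For $x, y \in V(G)$, any $F$-shortest path $P$ in $G$ between $x$ and $y$ is also a path in $\hat{G}$ avoiding both $r$ and $s$; hence $w_{\hat{F}}(P) = w_F(P)$, giving $\distgt{\hat{G}}{\hat{T}}{x}{y} \le w_F(P) = \distgt{G}{T}{x}{y}$.

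For part (iii), assume $x \gtsim{\hat{G}}{\hat{T}} y$, so $x$ and $y$ lie in a common factor-component of $(\hat{G}, \hat{T})$ and $\distgt{\hat{G}}{\hat{T}}{x}{y} = 0$. By part (i) this common factor-component is also a factor-component $C$ of $(G, T)$, so it remains to show $\distgt{G}{T}{x}{y} = 0$. The direction $\distgt{G}{T}{x}{y} \ge 0$ is immediate from part (ii). For the reverse, first verify, by applying Lemma~\ref{lem:minimumjoin} to the circuits of $G[V(C)]$, that $F \cap E(C)$ is a minimum join of the factor-connected graft $(G, T)_F[C]$. Then Lemma~\ref{lem:tnonpositive} yields $\distf{(G, T)_F[C]}{F \cap E(C)}{x}{y} \le 0$, and since any path within $C$ has the same $F$-weight whether viewed inside $(G, T)_F[C]$ or inside $(G, T)$, we obtain $\distgt{G}{T}{x}{y} \le 0$. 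Combined with part (ii), $\distgt{G}{T}{x}{y} = 0$, which together with belonging to the same factor-component of $(G, T)$ gives $x \gtsim{G}{T} y$.

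The main technical point deserving care is the last transfer in (iii): moving from a distance measured in $(G, T)_F[C]$ to one in $(G, T)$. Everything else amounts to unpacking the definition of the rootlization, invoking Lemma~\ref{lem:extend}\ref{item:extend:join} to pin down the minimum joins of $(\hat{G}, \hat{T})$, and reading off the consequences for allowed edges, factor-components, and distances.
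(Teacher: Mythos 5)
The paper states Lemma~\ref{lem:extend2sim} without giving a proof, so there is no reference argument to compare against; I will evaluate your proposal on its own terms. Your treatment of (i) and (ii) is correct and is essentially the unique reasonable route: Lemma~\ref{lem:extend}\ref{item:extend:join} pins down the minimum joins of $(\hat{G}, \hat{T})$, from which you correctly read off that $rs$ is the only allowed edge incident to $r$ or $s$, that allowedness of edges of $E(G)$ is preserved, and hence that the factor-components decompose as claimed; and for (ii) the observation that an $F$-shortest path in $G$ is a path of the same $\hat{F}$-weight in $\hat{G}$ settles the inequality.

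In (iii) the logic is right, but you lean on one unproven assertion: you call $(G,T)_F[C]$ ``the factor-connected graft'' in order to invoke Lemma~\ref{lem:tnonpositive}, yet this requires justification. It is not automatic that an edge of $C$ which is allowed in $(G,T)$ remains allowed in the subgraft $(G,T)_F[C]$. The standard argument runs: by maximality, $C = G[V(C)]$ and $\parcut{G}{V(C)}$ contains no allowed edges; hence for every minimum join $F'$ of $(G,T)$ one has $F'\cap\parcut{G}{V(C)}=\emptyset$, so $(G,T)_F[C]=(G[V(C)], T\cap V(C))$ independently of $F$, the restriction $F'\cap E(C)$ is a join of it, and it is a minimum join (otherwise replacing $F'\cap E(C)$ by a smaller join inside $C$ would improve $F'$). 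Consequently every allowed edge of $(G,T)$ lying in $C$ is allowed in $(G,T)_F[C]$, which gives factor-connectedness and licenses Lemma~\ref{lem:tnonpositive}. With that paragraph supplied, the proof is complete; also note that your application of Lemma~\ref{lem:minimumjoin} already gives the minimality of $F\cap E(C)$, so you really only owe the factor-connectedness half of the claim.
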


\subsection{Root Set Distance}

\begin{definition} 
Let $(G, T)$ be a graft, let $F\subseteq E(G)$, and let $R \subseteq V(G)$ be an extreme set. 
For every $x\in V(G)$, we denote $\min_{r \in R} \distgtf{G}{T}{F}{r}{x}$ by $\setdistgtf{G}{T}{F}{R}{x}$.  
\end{definition}

From Fact~\ref{fact:dist}, 
it can easily be confirmed that $\distgtf{G}{T}{F}{R}{x} = \distgtf{G}{T}{F'}{R}{x}$ for any two minimum joins $F$ and $F'$.   
Therefore,  
we sometimes denote $\distgtf{G}{T}{F}{R}{x}$ as $\distgt{G}{T}{R}{x}$ if $F$ is a minimum join of $(G, T)$.

\begin{definition}
Let $(G, T)$ be a graft, and let $R \subseteq V(G)$ be an extreme set. 
For every $i \in \mathbb{Z}$, 
we denote the set $\{ x\in V(G): \distgt{G}{T}{R}{x} = i \}$ by $\levelgtr{G}{T}{R}{i}$  
and the set $\{ x\in V(G): \distgt{G}{T}{R}{x} < i \}$ by $\laygtr{G}{T}{R}{i}$, 
and the set $\levelgtr{G}{T}{R}{i}  \cup \laygtr{G}{T}{R}{i}$  by $\laylegtr{G}{T}{R}{i}$, 
\end{definition}

\begin{definition}
Let $(G, T)$ be a graft, and let $R \subseteq V(G)$ be an extreme set.  
We call the sum of every member $K$ of $\conng{G}{\laylegtr{G}{T}{R}{0}}$ with $R \cap V(K) \neq \emptyset$ the {\em initial subgraph} of $R$ 
and denote this subgraph by $\initialgtr{G}{T}{R}$. 
We denote the set $V(\initialgtr{G}{T}{R})\cap \levelgtr{G}{T}{R}{0}$ by $\agtr{G}{T}{R}$,  
the set $V(\initialgtr{G}{T}{R})\setminus \agtr{G}{T}{R}$ by $\dgtr{G}{T}{R}$, 
and the set $V(G)\setminus \agtr{G}{T}{R}\setminus \dgtr{G}{T}{R}$ by $\cgtr{G}{T}{R}$. 
\end{definition}

\subsection{Correspondence between Rootlization and Root Set Distance} 

We list some observations on rootlization and distances determined by a set of roots. 
These observations can easily be confirmed from the definitions.  
We use the following properties in the remainder of this paper without explicitly mentioning it.

\begin{observation} 
Let $(G, T)$ be a graft,  and let $R \subseteq V(G)$ be an extreme set. 
Let $(\hat{G}, \hat{T}):= \extend{G}{T}{R}{r}{s}$.  
Let $F\subseteq E(G)$ be a minimum join of $(G, T)$, and let $\hat{F} := F \cup \{rs \}$. 
Then, $\distgtf{\hat{G}}{\hat{T}}{\hat{F}}{r}{x} = \setdistgtf{G}{T}{F}{R}{x}$ holds for every $x\in V(G)$. 
\end{observation}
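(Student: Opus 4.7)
The plan is to derive this directly from Lemma~\ref{lem:extend}(ii), which is the main substantive input; the role of the present observation is only to reinterpret its distance formula in the notation of root set distances.

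First I would verify that the hypotheses of Lemma~\ref{lem:extend} are met. Since $R$ is assumed to be an extreme set, the rootlization $(\hat{G}, \hat{T}) = \extend{G}{T}{R}{r}{s}$ satisfies the setup of that lemma with mount $X := R$. By Lemma~\ref{lem:extend}\ref{item:extend:join}, $\hat{F} = F \cup \{rs\}$ is a minimum join of $(\hat{G}, \hat{T})$, so the $\hat{F}$-distance $\distgtf{\hat{G}}{\hat{T}}{\hat{F}}{r}{x}$ is well-defined and equals the quantity $\distgt{\hat{G}}{\hat{T}}{r}{x}$ independent of the chosen minimum join.

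Next, for every $x \in V(G)$, Lemma~\ref{lem:extend}\ref{item:extend:distance} with $X = R$ yields
\[
\distgtf{\hat{G}}{\hat{T}}{\hat{F}}{r}{x} = \min_{r' \in R} \distgtf{G}{T}{F}{r'}{x}.
\]
By the definition of the root set distance, the right-hand side is exactly $\setdistgtf{G}{T}{F}{R}{x}$, giving the claimed equality.

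There is essentially no obstacle here: all the real work, in particular showing that the new edges $sx$ ($x \in R$) contribute nothing to shortest paths and that $rs$ contributes exactly $-1$ to every $r$-to-$x$ shortest path, is already carried by Lemma~\ref{lem:extend}\ref{item:extend:distance}. The only care needed is to ensure that the notation matches: the mount in the rootlization is the same set $R$ over which the minimum in $\setdistgtf{G}{T}{F}{R}{x}$ is taken, which is built into the hypothesis.
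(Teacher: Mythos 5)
Your proposal is correct and matches the paper's (implicit) argument: the paper offers no proof beyond noting the observation follows from the definitions, and indeed the claimed equality is exactly Lemma~\ref{lem:extend}\ref{item:extend:distance} applied with mount $X = R$, combined with the definition of $\setdistgtf{G}{T}{F}{R}{x}$ as $\min_{r'\in R}\distgtf{G}{T}{F}{r'}{x}$. Your additional check via Lemma~\ref{lem:extend}\ref{item:extend:join} that $\hat{F}$ is a minimum join is appropriate bookkeeping and nothing more is needed.
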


\begin{observation} 
Let $(G, T)$ be a graft, and let $R \subseteq V(G)$ be an extreme set.  
Let $(\hat{G}, \hat{T}):= \extend{G}{T}{X}{r}{s}$.   
Then, 
\begin{rmenum} 
\item $\laylegtr{G}{T}{R}{i} = \laylegtr{\hat{G}}{\hat{T}}{r}{i}$ for every $i \in \mathbb{Z}$ with $i < -1$, 
\item $\laylegtr{\hat{G}}{\hat{T}}{r}{-1} = \laylegtr{G}{T}{R}{-1}  \dot\cup \{ s \} $, and   
\item $\laylegtr{\hat{G}}{\hat{T}}{r}{i} = \laylegtr{G}{T}{R}{i}  \dot\cup \{ r, s \} $ for every $i \in \mathbb{Z}$ with $i \ge 0$. 
\end{rmenum} 
\end{observation}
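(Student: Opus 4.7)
The plan is to reduce the statement to a direct case analysis on the $\hat{F}$-distances from $r$ in the rootlization, where the crucial input is Lemma~\ref{lem:extend}\ref{item:extend:distance}, which identifies those distances with root-set distances from $R$ in the original graft.

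First I would fix a minimum join $F$ of $(G,T)$ and set $\hat{F}:=F\cup\{rs\}$. By Lemma~\ref{lem:extend}\ref{item:extend:join}, $\hat{F}$ is a minimum join of $(\hat{G},\hat{T})$, so under Fact~\ref{fact:dist} the $\hat{F}$-distance in $\hat{G}$ from $r$ represents the graft distance $\distgt{\hat{G}}{\hat{T}}{r}{\cdot}$. Lemma~\ref{lem:extend}\ref{item:extend:distance} then gives three facts that I will use repeatedly: (a) $\distgtf{\hat{G}}{\hat{T}}{\hat{F}}{r}{y}=\setdistgtf{G}{T}{F}{R}{y}$ for every $y\in V(G)$; (b) $\distgtf{\hat{G}}{\hat{T}}{\hat{F}}{r}{s}=-1$; and (c) $\distgtf{\hat{G}}{\hat{T}}{\hat{F}}{r}{r}=0$ by definition.

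Next I would split $V(\hat{G})=V(G)\,\dot\cup\,\{r,s\}$ and determine membership in $\laylegtr{\hat{G}}{\hat{T}}{r}{i}$ part by part. For a vertex $y\in V(G)$, fact~(a) shows that $y\in\laylegtr{\hat{G}}{\hat{T}}{r}{i}$ iff $\setdistgt{G}{T}{R}{y}\le i$, i.e., iff $y\in\laylegtr{G}{T}{R}{i}$, independently of $i$. For the attachment $s$, fact~(b) shows $s\in\laylegtr{\hat{G}}{\hat{T}}{r}{i}$ exactly when $i\ge -1$. For the root $r$, fact~(c) shows $r\in\laylegtr{\hat{G}}{\hat{T}}{r}{i}$ exactly when $i\ge 0$.

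Assembling these three cases gives the three claims of the observation: when $i<-1$ neither $r$ nor $s$ lies in $\laylegtr{\hat{G}}{\hat{T}}{r}{i}$, yielding~(i); when $i=-1$ only $s$ is added, yielding~(ii); and when $i\ge 0$ both $r$ and $s$ are added, yielding~(iii). The disjoint-union symbol is legitimate because $r,s\notin V(G)$ by construction of the rootlization. I do not foresee any real obstacle: once the distance identification from Lemma~\ref{lem:extend}\ref{item:extend:distance} is in hand, the argument is purely a bookkeeping comparison of distance values with the threshold $i$, which is precisely why the statement is recorded as an observation rather than a lemma.
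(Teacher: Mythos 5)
Your proof is correct and is exactly the direct verification the paper has in mind: the paper records this as an Observation with the remark that it ``can easily be confirmed from the definitions'' and gives no written proof, and your argument---using Lemma~\ref{lem:extend}\ref{item:extend:distance} to identify $\distgt{\hat{G}}{\hat{T}}{r}{\cdot}$ on $V(G)$ with the root-set distance $\setdistgt{G}{T}{R}{\cdot}$, then separately noting $\distgt{\hat{G}}{\hat{T}}{r}{s}=-1$ and $\distgt{\hat{G}}{\hat{T}}{r}{r}=0$ and comparing against the threshold $i$---is precisely that check. No gaps.
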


\begin{observation} \label{obs:initreduc} 
Let $(G, T)$ be a bipartite graft with color classes $A$ and $B$,  and let $X \subseteq V(G)$ be an extreme set.  
Let $(\hat{G}, \hat{T}):= \extend{G}{T}{X}{r}{s}$.   
Then,  
$\initialgtr{\hat{G}}{\hat{T}}{r} = \initialgtr{G}{T}{X} + \parcut{\hat{G}}{s}$, 
$\agtr{\hat{G}}{\hat{T}}{r} = \agtr{G}{T}{X} \cup \{r\}$, and $\dgtr{\hat{G}}{\hat{T}}{r}  = \dgtr{G}{T}{X} \cup \{s\}$. 
\end{observation}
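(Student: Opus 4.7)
The plan is to deduce all three identities directly from the layer correspondence recorded in the immediately preceding observation together with the definitions of $\initialgtr{}{}{}$, $\agtr{}{}{}$, and $\dgtr{}{}{}$; no new structural arguments are required.

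First, I would instantiate the preceding observation at $i = 0$ to record
\[ \laylegtr{\hat{G}}{\hat{T}}{r}{0} = \laylegtr{G}{T}{X}{0} \cup \{r, s\}, \qquad \levelr{0}{r} = \levelgtr{G}{T}{X}{0} \cup \{r\}, \]
where the second equality uses $\distgt{\hat{G}}{\hat{T}}{r}{s} = -1$ from Lemma~\ref{lem:extend}.

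Next, I would identify the connected component of $\hat{G}[\laylegtr{\hat{G}}{\hat{T}}{r}{0}]$ containing $r$. In $\hat{G}$, the attachment $s$ is adjacent to $r$ and to every vertex of $X$, with no other neighbors; and since $X \subseteq \agtr{G}{T}{X} \subseteq \laylegtr{G}{T}{X}{0}$, the vertex $s$ links $r$ precisely to those connected components of $G[\laylegtr{G}{T}{X}{0}]$ that meet $X$. The sum of these components is by definition $\initialgtr{G}{T}{X}$, and appending the edge set $\{rs\} \cup \{sx : x\in X\} = \parcut{\hat{G}}{s}$ gives $\initialgtr{\hat{G}}{\hat{T}}{r} = \initialgtr{G}{T}{X} + \parcut{\hat{G}}{s}$. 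The only point requiring a moment of care is confirming that no component of $G[\laylegtr{G}{T}{X}{0}]$ disjoint from $X$ attaches to this block; this is immediate because $s$ has no neighbors outside $X \cup \{r\}$.

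Finally, I would read off the remaining two equalities by intersecting $V(\initialgtr{\hat{G}}{\hat{T}}{r}) = V(\initialgtr{G}{T}{X}) \cup \{r, s\}$ with $\levelr{0}{r} = \levelgtr{G}{T}{X}{0} \cup \{r\}$, which yields $\agtr{\hat{G}}{\hat{T}}{r} = \agtr{G}{T}{X} \cup \{r\}$, and by taking complements inside $V(\initialgtr{\hat{G}}{\hat{T}}{r})$, which gives $\dgtr{\hat{G}}{\hat{T}}{r} = \dgtr{G}{T}{X} \cup \{s\}$ (using that $s \notin \levelgtr{G}{T}{X}{0}$). There is no genuine obstacle here; the whole verification is a bookkeeping exercise built on Lemma~\ref{lem:extend} and the previous layer observation.
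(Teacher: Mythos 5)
Your proof is correct and follows exactly the route the paper intends: the paper gives no explicit proof and simply states that these observations "can easily be confirmed from the definitions," and your argument is precisely that confirmation, instantiating the layer correspondence at $i=0$, identifying the component of $r$ in $\hat{G}[\laylegtr{\hat{G}}{\hat{T}}{r}{0}]$ via $s$'s neighborhood $X\cup\{r\}$, and then intersecting with $\levelgtr{\hat{G}}{\hat{T}}{r}{0}$ to read off $\agtr{\hat{G}}{\hat{T}}{r}$ and $\dgtr{\hat{G}}{\hat{T}}{r}$. One small point worth making explicit, though it does not affect correctness, is that $X\subseteq\agtr{G}{T}{X}$ holds because $X$ is extreme (so $\distgt{G}{T}{X}{x}=0$ for $x\in X$), which is what guarantees every component of $G[\laylegtr{G}{T}{X}{0}]$ meeting $X$ gets glued to $r$ through $s$.
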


\section{Initial Subgraph of Homogeneous Extreme Root Set}

\begin{lemma} \label{lem:extend2icomp} 
Let $(G, T)$ be a bipartite graft with color classes $A$ and $B$, and let $X \subseteq A$ be an extreme set. 
Let $(\hat{G}, \hat{T}):= \extend{G}{T}{X}{r}{s}$.   
Let $S$ be a member of $\gtpart{\hat{G}}{\hat{T}}$ with $S \subseteq \agtr{\hat{G}}{\hat{T}}{r}$ and $S \neq \{r\}$. 
Then, $S \in \gtpart{G}{T}$ holds, and $\coupgt{\hat{G}}{\hat{T}}{S}  = \coupgt{G}{T}{S}$. 
\end{lemma}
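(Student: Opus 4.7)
The plan is to split the claim into two parts: establishing $S \in \gtpart{G}{T}$, then verifying $\coupgt{\hat{G}}{\hat{T}}{S}=\coupgt{G}{T}{S}$. First I would observe that the singleton $\{r\}$ is itself an equivalence class of $\gtsim{\hat{G}}{\hat{T}}$: by Lemma~\ref{lem:extend2sim}(i), the factor-component of $\hat{G}$ containing $r$ is the edge $\hat{G}.rs$, and by Lemma~\ref{lem:extend}(ii), $\distgt{\hat{G}}{\hat{T}}{r}{s}=-1\neq 0$. Because $S \neq \{r\}$, this forces $r \notin S$, so $S \subseteq V(G)$; together with $S \subseteq \agtr{\hat{G}}{\hat{T}}{r}$ and Observation~\ref{obs:initreduc}, this upgrades to $S \subseteq \agtr{G}{T}{X}$. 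Lemma~\ref{lem:extend2sim}(iii) then places $S$ inside some single class $S' \in \gtpart{G}{T}$, and the remaining content of part one is the reverse inclusion $S' \subseteq S$.

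For that direction, fix $x \in S$ and pick any $y \in S'$. Since $x \gtsim{G}{T} y$, they share a factor-component of $G$, which by Lemma~\ref{lem:extend2sim}(i) is also a factor-component of $\hat{G}$. What remains is $\distgt{\hat{G}}{\hat{T}}{x}{y}=0$. The bound $\leq 0$ is immediate from Lemma~\ref{lem:extend2sim}(ii) combined with $\distgt{G}{T}{x}{y}=0$. The lower bound $\geq 0$ is the main obstacle. Fix $\hat{F}=F\cup\{rs\}$ as in Lemma~\ref{lem:extend}(i) and let $P$ be an arbitrary $x$--$y$ path in $\hat{G}$. Since $r$ has degree one in $\hat{G}$ and is neither endpoint of $P$, we have $r \notin V(P)$. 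If $s \notin V(P)$ then $P$ lies in $G$ and $w_{\hat{F}}(P)=w_F(P)\geq \distgt{G}{T}{x}{y}=0$. If $s \in V(P)$ then $P$ decomposes as $xP_{1}x'+x's+sy'+y'P_{2}y$ with $x',y' \in X$; the two edges $x's$ and $sy'$ lie outside $\hat{F}$ and contribute $+2$, while the $G$-subpaths satisfy $w_F(xP_1x')\geq\distgt{G}{T}{x'}{x}\geq 0$ and $w_F(y'P_2 y)\geq\distgt{G}{T}{y'}{y}\geq 0$. The crucial nonnegativities $\distgt{G}{T}{x'}{x},\distgt{G}{T}{y'}{y}\geq 0$ come from $x,y\in\agtr{G}{T}{X}$: the identity $\setdistgt{G}{T}{X}{x}=0$ forces every $\distgt{G}{T}{z}{x}$ with $z \in X$ to be nonnegative, and similarly for $y$. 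Hence $w_{\hat{F}}(P)\geq 0$ in every case, so $\distgt{\hat{G}}{\hat{T}}{x}{y}=0$ and $y\gtsim{\hat{G}}{\hat{T}} x$, placing $y$ into $S$.

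For the critical-set equality, the inclusion $\coupgt{G}{T}{S}\subseteq\coupgt{\hat{G}}{\hat{T}}{S}$ is immediate, since any $F$-negative witness path in $G$ remains an $\hat{F}$-negative path in $\hat{G}$ with the same weight. For the reverse inclusion I would first rule out $r,s\in\coupgt{\hat{G}}{\hat{T}}{S}$: $\distgt{\hat{G}}{\hat{T}}{r}{y}=\setdistgt{G}{T}{X}{y}=0$ for every $y \in S$, so no negative $r$--$S$ path exists; and any simple $s$--$y$ path in $\hat{G}$ must begin with an edge $sx'$ with $x'\in X$ (since $r$ has degree one in $\hat{G}$), contributing $+1$, while the $G$-segment from $x'$ to $y \in \agtr{G}{T}{X}$ has $F$-weight $\geq 0$ by the same extremity argument, giving total weight $\geq 1$. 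Therefore $\coupgt{\hat{G}}{\hat{T}}{S}\subseteq V(G)$, and any negative path witnessing a member has all its vertices in $V(G)$, so it lies in $G$ with $w_F=w_{\hat{F}}<0$; this makes $\coupgt{\hat{G}}{\hat{T}}{S}$ an $F$-negative set for $S$ in $(G,T)$, hence contained in $\coupgt{G}{T}{S}$ by maximality.
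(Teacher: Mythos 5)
Your proof takes a genuinely different route from the paper. The paper proves this lemma by applying the already-established Theorem~\ref{thm:icomp} to $(\hat{G},\hat{T},r)$ to decompose $\agtr{\hat{G}}{\hat{T}}{r}$ into KL classes $S_1,\ldots,S_k$, and then using Lemma~\ref{lem:fcomp2ar} in $(\hat{G},\hat{T})$ together with Lemma~\ref{lem:extend2sim} to pin down each class $S_i$ and its critical set. You instead argue directly from the definitions: you reduce to $S \subseteq S'$ via Lemma~\ref{lem:extend2sim}, prove the reverse inclusion by bounding $\distgt{\hat{G}}{\hat{T}}{x}{y}$ from below through an explicit case analysis on whether an $x$--$y$ path passes through $s$, and handle the critical-set equality by showing $r,s \notin \coupgt{\hat{G}}{\hat{T}}{S}$ and invoking maximality of negative sets on both sides. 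This is more self-contained, in that it avoids Theorem~\ref{thm:icomp} and Lemma~\ref{lem:fcomp2ar} entirely, at the cost of the hands-on path analysis; both approaches are sound.

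There is one spot you should tighten. In the path-weight argument you assert $x,y\in\agtr{G}{T}{X}$ to conclude $\distgt{G}{T}{y'}{y}\ge 0$ for $y'\in X$. But $y$ is an arbitrary element of $S'$, and $S'\subseteq\agtr{G}{T}{X}$ is not yet available --- indeed $S'=S$ is exactly what you are in the middle of proving, so taking $y\in\agtr{G}{T}{X}$ as given risks a circular appearance. The fact you actually need, namely $\distgt{G}{T}{y'}{y}\ge 0$ for $y'\in X$, follows instead from Theorem~\ref{thm:unit4dist}: since $y\gtsim{G}{T}x$ we have $\distgt{G}{T}{y'}{y}=\distgt{G}{T}{y'}{x}$, and the latter is nonnegative because $x\in S\subseteq\agtr{G}{T}{X}$. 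With that substitution the argument closes cleanly.
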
 
\begin{proof} 
According to Theorem~\ref{thm:icomp}, 
there exist $S_1, \ldots, S_k \in \tpart{\hat{G}}{\hat{T}}$ 
such that $\agtr{\hat{G}}{\hat{T}}{r} = S_1 \dot\cup \cdots \dot\cup S_k$, where $k\ge 1$,  
and $\dgtr{\hat{G}}{\hat{T}}{r} = \coupgt{\hat{G}}{\hat{T}}{S_1} \dot\cup \cdots \dot\cup \coupgt{\hat{G}}{\hat{T}}{S_k}$. 
From Lemma~\ref{lem:extend}, 
there exists $i\in \{1, \ldots, k\}$ with $S_i = \{r\}$; without loss of generality, we assume $i = 1$. 
We also have $\coupgt{\hat{G}}{\hat{T}}{S_1} = \{ s\}$. 
Now, let $i\in \{1, \ldots, k\}\setminus \{1\}$.
Under Lemma~\ref{lem:extend2sim}, let $S_i'$ be the member of $\tpart{G}{T}$ with $S_i \subseteq S_i'$, 
and let $C$ be the member of $\tpart{\hat{G}}{\hat{T}}$ with $S_i' \subseteq  V(C)$. 
Lemma~\ref{lem:fcomp2ar} implies   $V(C) \subseteq S_i \cup \coupgt{\hat{G}}{\hat{T}}{S_i}$.  
Hence, we have  $ S_i'\setminus S_i  \subseteq \coupgt{\hat{G}}{\hat{T}}{S_i}$.  
However, because $\coupgt{\hat{G}}{\hat{T}}{S_i}$ is disjoint from $s$,  
we have $( S_i' \setminus S_i )\cap \coupgt{\hat{G}}{\hat{T}}{S_i} = \emptyset$. 
This implies $S_i' = S_i$, which further implies $\coupgt{\hat{G}}{\hat{T}}{S_i} = \coupgt{G}{T}{S_i}$.  
The lemma is proved. 
\end{proof}

Theorem~\ref{thm:icomp} and Lemma~\ref{lem:extend2icomp} easily imply Lemma~\ref{lem:hsetar2partition}.

\begin{lemma} \label{lem:hsetar2partition} 
Let $(G, T)$ be a bipartite graft with color classes $A$ and $B$, and let $X \subseteq A$ be an extreme set. 
Then, there exist $S_1, \ldots, S_k \in\tpart{G}{T}$, where $k\ge 1$, 
such that $\agtr{G}{T}{X} = S_1 \dot\cup \cdots \dot\cup S_k$. 
Furthermore, 
$\dgtr{G}{T}{X} = \coupgt{G}{T}{S_1} \dot\cup \cdots \dot\cup  \coupgt{G}{T}{S_k}$ and 
$\conng{G}{\dgtr{G}{T}{X}} = \conng{G}{\coupgt{G}{T}{S_1}} \dot\cup \cdots \dot\cup \conng{G}{\coupgt{G}{T}{S_k}}$. 
\end{lemma}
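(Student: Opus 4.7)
The plan is to reduce the assertion to the single-root decomposition, Theorem~\ref{thm:icomp}, via rootlization. Since $X$ is extreme, I would form $(\hat{G}, \hat{T}) := \extend{G}{T}{X}{r}{s}$. Lemma~\ref{lem:extend} guarantees that minimum joins of $(\hat{G}, \hat{T})$ are exactly the sets $F \cup \{rs\}$ where $F$ is a minimum join of $(G, T)$, and Observation~\ref{obs:initreduc} then yields $\agtr{\hat{G}}{\hat{T}}{r} = \agtr{G}{T}{X} \dot\cup \{r\}$ and $\dgtr{\hat{G}}{\hat{T}}{r} = \dgtr{G}{T}{X} \dot\cup \{s\}$. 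Note that $X \subseteq A$ is part of the hypothesis, which is exactly what is required for applying Lemma~\ref{lem:extend2icomp} later.

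Next, I would apply Theorem~\ref{thm:icomp} to $(\hat{G}, \hat{T})$ with root $r$. This produces equivalence classes $S_0, S_1, \ldots, S_k \in \tpart{\hat{G}}{\hat{T}}$ that partition $\agtr{\hat{G}}{\hat{T}}{r}$, together with critical sets $\coupgt{\hat{G}}{\hat{T}}{S_i}$ partitioning $\dgtr{\hat{G}}{\hat{T}}{r}$ and connected-component sets $\conng{\hat{G}}{\coupgt{\hat{G}}{\hat{T}}{S_i}}$ partitioning $\conng{\hat{G}}{\dgtr{\hat{G}}{\hat{T}}{r}}$. From the construction of $\hat{G}$, the vertex $r$ is adjacent only to $s$ and the distance $\distgt{\hat{G}}{\hat{T}}{r}{s} = -1$ forces $s$ into the critical set paired with $\{r\}$; hence, after relabeling we may assume $S_0 = \{r\}$ and $\coupgt{\hat{G}}{\hat{T}}{S_0} = \{s\}$. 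Stripping this artificial pair from both decompositions leaves $\agtr{G}{T}{X} = S_1 \dot\cup \cdots \dot\cup S_k$ and $\dgtr{G}{T}{X} = \coupgt{\hat{G}}{\hat{T}}{S_1} \dot\cup \cdots \dot\cup \coupgt{\hat{G}}{\hat{T}}{S_k}$.

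Finally, for each $i \in \{1, \ldots, k\}$ the class $S_i$ satisfies $S_i \subseteq \agtr{\hat{G}}{\hat{T}}{r}$ and $S_i \neq \{r\}$, so Lemma~\ref{lem:extend2icomp} yields $S_i \in \tpart{G}{T}$ and $\coupgt{\hat{G}}{\hat{T}}{S_i} = \coupgt{G}{T}{S_i}$. Substituting these identifications produces the first two equalities of the conclusion. For the connected-component partition, observe that by Lemma~\ref{lem:dr2char} each $\coupgt{G}{T}{S_i}$ equals $\neisetgt{G}{T}{S_i}$, namely the union of the vertex sets of the members of $\neicompgt{G}{T}{S_i} \subseteq \conng{G}{\dgtr{G}{T}{X}}$; Lemma~\ref{lem:neigh2sole} ensures these families are pairwise disjoint across distinct $S_i$, so the corresponding connected-component sets indeed partition $\conng{G}{\dgtr{G}{T}{X}}$.

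I do not anticipate any serious obstacle: the argument is essentially bookkeeping that transfers the single-root structure through the rootlization. The only mildly delicate point is confirming that the artificial class-coupling pair $(\{r\}, \{s\})$ always appears in the decomposition of the rootlized graft so that it can be cleanly removed, but this is immediate from the description of minimum joins and distances in Lemma~\ref{lem:extend}.
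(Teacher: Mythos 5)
Your proof takes the same route the paper intends: rootlize by $X$, apply Theorem~\ref{thm:icomp} to $(\hat G,\hat T)$ at the new root $r$, identify and strip the artificial pair $(\{r\},\{s\})$, and transfer back via Lemma~\ref{lem:extend2icomp}. The identification $S_0=\{r\}$, $\coupgt{\hat G}{\hat T}{S_0}=\{s\}$ is exactly the observation made in the proof of Lemma~\ref{lem:extend2icomp}, and the reduction of the first two equalities is clean.

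The one place where you slip into hand-waving is the last sentence, on $\conng{G}{\dgtr{G}{T}{X}}$. You assert $\neicompgt{G}{T}{S_i}\subseteq\conng{G}{\dgtr{G}{T}{X}}$, but the members of $\neicompgt{G}{T}{S_i}$ are a priori components of $G[\dgtr{G}{T}{r}]$ for a single vertex root $r$, and it needs justification that they are \emph{maximal} connected subgraphs of $G[\dgtr{G}{T}{X}]$, not merely connected subgraphs contained in it. You also only argue pairwise disjointness of the families $\neicompgt{G}{T}{S_i}$, which by itself does not give that their union exhausts $\conng{G}{\dgtr{G}{T}{X}}$. Both points are in fact handled more economically by the framework you already set up: Theorem~\ref{thm:icomp} applied to $(\hat G,\hat T)$ delivers the connected-component partition $\conng{\hat G}{\dgtr{\hat G}{\hat T}{r}} = \conng{\hat G}{\{s\}}\,\dot\cup\,\conng{\hat G}{\coupgt{\hat G}{\hat T}{S_1}}\,\dot\cup\cdots$ directly, and since $r$ and $s$ have no edges into $\dgtr{G}{T}{X}$ (the neighbors of $s$ in $\hat G$ are $r$ and $X\subseteq\agtr{G}{T}{X}$), the induced subgraph on $\dgtr{G}{T}{X}$ is the same in $\hat G$ as in $G$; so stripping the singleton $\{s\}$ and invoking $\coupgt{\hat G}{\hat T}{S_i}=\coupgt{G}{T}{S_i}$ gives the third equality without any appeal to Lemma~\ref{lem:dr2char} or Lemma~\ref{lem:neigh2sole}. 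I would replace the final paragraph with that direct transfer.
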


\section{Union of Initial Components}

We first provide and prove Lemma~\ref{lem:ad2include} and then use this lemma to derive Lemma~\ref{lem:ad2union}.

\begin{lemma} \label{lem:ad2include} 
Let $(G, T)$ be a bipartite graft, and let $r\in V(G)$. 
Let $x\in \agtr{G}{T}{r}$. 
Then, $\agtr{G}{T}{x} \subseteq \agtr{G}{T}{r}$ and $\dgtr{G}{T}{x} \subseteq \dgtr{G}{T}{r}$ hold. 
\end{lemma}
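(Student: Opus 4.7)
My plan is to reduce everything to Lemma~\ref{lem:ar}, applied once at root $r$ and once at root $x$, together with the fact (from Fact~\ref{fact:dist}) that $\distgt{G}{T}{u}{v}$ is symmetric in $u$ and $v$. Fix any minimum join $F$.

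First I would prove the stronger containment $V(\initialgtr{G}{T}{x}) \subseteq V(\initialgtr{G}{T}{r})$, which yields both asserted inclusions once $A$ and $D$ are separated. Given $y \in V(\initialgtr{G}{T}{x})$, there is a path $P$ from $x$ to $y$ in $G[\layler{0}{x}]$; Lemma~\ref{lem:ar} applied at $r$ (using $x \in \agtr{G}{T}{r}$) gives $\distgtf{G}{T}{F}{r}{v} \le \distgtf{G}{T}{F}{x}{v} \le 0$ for every $v \in V(P)$, so $P$ lies in $G[\layler{0}{r}]$ and connects $y$ to $x \in V(\initialgtr{G}{T}{r})$.

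With this in place, $\dgtr{G}{T}{x} \subseteq \dgtr{G}{T}{r}$ drops out: for $y \in \dgtr{G}{T}{x}$ the inequality $\distgtf{G}{T}{F}{x}{y} < 0$ propagates through Lemma~\ref{lem:ar} to $\distgtf{G}{T}{F}{r}{y} < 0$, and initial-subgraph membership was already secured. The more delicate inclusion is $\agtr{G}{T}{x} \subseteq \agtr{G}{T}{r}$, where for $y \in \agtr{G}{T}{x}$ I still must establish $\distgtf{G}{T}{F}{r}{y} = 0$. The upper bound $\distgtf{G}{T}{F}{r}{y} \le 0$ is again Lemma~\ref{lem:ar} at root $r$.

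The main obstacle, and really the only trick, is the matching lower bound. I would invoke Lemma~\ref{lem:ar} a second time, now at root $x$ with the distinguished vertex $y \in \agtr{G}{T}{x}$ and target $r$, to obtain $\distgtf{G}{T}{F}{x}{r} \le \distgtf{G}{T}{F}{y}{r}$. The symmetry of $\lambda$ and $x \in \agtr{G}{T}{r}$ make the left-hand side zero, forcing $\distgtf{G}{T}{F}{r}{y} \ge 0$, and combining with the upper bound gives $y \in \agtr{G}{T}{r}$. Recognizing that Lemma~\ref{lem:ar} should be applied from \emph{both} roots is the one nonobvious move; beyond this, the argument is pure bookkeeping.
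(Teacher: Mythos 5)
Your proof is correct and matches the paper's proof in its essential structure: first establish that $\initialgtr{G}{T}{x}$ lies inside $\initialgtr{G}{T}{r}$, then apply Lemma~\ref{lem:ar} from root $r$ for the upper bound and a second time from root $x$ (together with symmetry of $\lambda$) for the lower bound, which is precisely the two-sided argument the paper uses. The only variation is in the first step, where you trace a path in $G[\laylegtr{G}{T}{x}{0}]$ directly rather than, as the paper does, using Theorem~\ref{thm:neigh2dist} to show that every neighbor of $\initialgtr{G}{T}{r}$ has distance at least $1$ from $x$; both are equally valid and equally short.
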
 
\begin{proof} 
Let $F$ be a minimum join of $(G, T)$. 

\begin{pclaim} \label{claim:initial} 
$\agtr{G}{T}{x} \cup \dgtr{G}{T}{x} \subseteq \agtr{G}{T}{r}\cup \dgtr{G}{T}{r}$.
\end{pclaim} 
\begin{proof} 
Let $y\in  \parNei{G}{\initialgtr{G}{T}{r}}$.   
Theorem~\ref{thm:neigh2dist} implies $\distgtf{G}{T}{F}{r}{y} = 1$. 
Because Lemma~\ref{lem:ar} implies $\distgtf{G}{T}{F}{x}{y} \ge \distgtf{G}{T}{F}{r}{y}$, 
we obtain $\distgtf{G}{T}{F}{x}{y} \ge 1$. 
Consequently, $\distgtf{G}{T}{F}{x}{y} \ge 1$ holds for every $y\in  \parNei{G}{\initialgtr{G}{T}{r}}$.   
Therefore, $\initialgtr{G}{T}{x}$ is a subgraph of $\initialgtr{G}{T}{r}$, 
and the claim is proved. 
\end{proof} 

Under Claim~\ref{claim:initial}, we further prove the following claims. 
\begin{pclaim} 
$\dgtr{G}{T}{x} \subseteq \dgtr{G}{T}{r}$.   
\end{pclaim} 
\begin{proof} 
For every $y \in \dgtr{G}{T}{x}$,  Lemma~\ref{lem:ar} implies $\distgtf{G}{T}{F}{r}{y} \le \distgtf{G}{T}{F}{x}{y} \le  -1$. 
By Claim~\ref{claim:initial}, this implies $\dgtr{G}{T}{x} \subseteq \dgtr{G}{T}{r}$.  
\end{proof}

\begin{pclaim} 
$\agtr{G}{T}{x} \subseteq \agtr{G}{T}{r}$.   
\end{pclaim} 
\begin{proof} 
Next, let $y\in \agtr{G}{T}{x}$. 
By applying Lemma~\ref{lem:ar} to $\agtr{G}{T}{x}$, 
we obtain that $\distgtf{G}{T}{F}{y}{r} \ge \distgtf{G}{T}{F}{x}{r} = 0$. 
Thus, Claim~\ref{claim:initial} implies $y\in \agtr{G}{T}{r}$.   
Therefore,   $\agtr{G}{T}{x} \subseteq \agtr{G}{T}{r}$ is proved.   
\end{proof} 
This completes the proof.

\end{proof}

Lemma~\ref{lem:ad2include} easily implies Lemma~\ref{lem:ad2union}.

\begin{lemma} \label{lem:ad2union} 
Let $(G, T)$ be a bipartite graft, and let $r\in V(G)$.  
Then, 
$\agtr{G}{T}{r} = \bigcup_{x\in \agtr{G}{T}{r} } \agtr{G}{T}{x}$ 
and $\dgtr{G}{T}{r} = \bigcup_{x\in \dgtr{G}{T}{r} } \dgtr{G}{T}{x}$. 
\end{lemma}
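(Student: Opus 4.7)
The plan is to deduce both equalities from Lemma~\ref{lem:ad2include} together with the trivial observation that every vertex $v\in V(G)$ lies in $\agtr{G}{T}{v}$, since $\distgt{G}{T}{v}{v}=0$ and $v$ is automatically in its own initial subgraph $\initialgtr{G}{T}{v}$.

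For the first equality $\agtr{G}{T}{r}=\bigcup_{x\in\agtr{G}{T}{r}}\agtr{G}{T}{x}$, the $\supseteq$ direction is exactly the content of Lemma~\ref{lem:ad2include} applied to each $x\in\agtr{G}{T}{r}$. The $\subseteq$ direction holds because every $y\in\agtr{G}{T}{r}$ is captured in the union by the witness $x=y$, using the trivial observation above.

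For the second equality $\dgtr{G}{T}{r}=\bigcup_{x\in\dgtr{G}{T}{r}}\dgtr{G}{T}{x}$, I intend a parallel two-step argument. For the $\supseteq$ direction, given $x\in\dgtr{G}{T}{r}$ and $z\in\dgtr{G}{T}{x}$, I would take an $F$-shortest $(r,x)$-path inside $\initialgtr{G}{T}{r}$ together with an $F$-shortest $(x,z)$-path inside $\initialgtr{G}{T}{x}$, both controlled by Theorem~\ref{thm:path2cut}. Then via Theorem~\ref{thm:neigh2dist} and the extremality of $\agtr{G}{T}{r}$ given by Lemma~\ref{lem:exext}, I would argue that every vertex of the concatenation sits inside $\laylegtr{G}{T}{r}{0}$, so $z$ is connected to $r$ inside $G[\laylegtr{G}{T}{r}{0}]$ and therefore belongs to $\initialgtr{G}{T}{r}$, placing it in $\dgtr{G}{T}{r}$. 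For the $\subseteq$ direction, given $y\in\dgtr{G}{T}{r}$, I would invoke Theorem~\ref{thm:icomp} to locate $y$ inside some $\coupgt{G}{T}{S_i}$, then select a witness $x\in\dgtr{G}{T}{r}$ from a $\gtsim{G}{T}$-class of the same odd component that is distinct from $[y]$; Theorem~\ref{thm:seboprimal} together with Theorem~\ref{thm:unit4dist} would then guarantee $\distgt{G}{T}{x}{y}<0$ and $y\in\initialgtr{G}{T}{x}$, so that $y\in\dgtr{G}{T}{x}$.

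The main obstacle is the $\subseteq$ direction of the second equality, since the analogue of the trivial witness $x=y$ used in the first equality is no longer available: $y$ belongs to $\agtr{G}{T}{y}$, not $\dgtr{G}{T}{y}$. The witness $x$ must therefore come from a genuinely different $\gtsim{G}{T}$-class inside the odd component containing $y$. Handling odd components whose $\gtsim{G}{T}$-partition offers no suitable alternative class at the appropriate depth will need a dedicated argument that descends along an $F$-shortest path from $r$ through $y$ and exploits the primal structure of Theorem~\ref{thm:seboprimal} in tandem with the extremality statements of Lemma~\ref{lem:exext}.
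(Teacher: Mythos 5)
Your handling of the first equality is correct and is exactly the intended argument: $\supseteq$ is Lemma~\ref{lem:ad2include}, and $\subseteq$ follows from a trivial witness ($x=y$, or simply $x=r$). The genuine gap is in the second equality, and it is not an obstacle that a more clever argument can remove. The step ``$z$ is connected to $r$ inside $G[\laylegtr{G}{T}{r}{0}]$ and therefore belongs to $\initialgtr{G}{T}{r}$, placing it in $\dgtr{G}{T}{r}$'' is a non sequitur: $V(\initialgtr{G}{T}{r})=\agtr{G}{T}{r}\cup\dgtr{G}{T}{r}$, and nothing prevents $z$ from landing in $\agtr{G}{T}{r}$. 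In fact, with the index set $\dgtr{G}{T}{r}$ that you (faithfully to the printed text) adopt, both inclusions fail: take $G$ a single edge $uv$, $T=\{u,v\}$, $r=u$; then $\dgtr{G}{T}{u}=\{v\}$ and $\dgtr{G}{T}{v}=\{u\}$, so $\bigcup_{x\in\dgtr{G}{T}{r}}\dgtr{G}{T}{x}=\{u\}\neq\{v\}=\dgtr{G}{T}{r}$. This same example defeats your $\subseteq$ plan: the component of $G[\dgtr{G}{T}{r}]$ containing $y=v$ is the single vertex $v$, so no witness from a different $\gtsim{G}{T}$-class exists; the ``problematic case'' you flag at the end is a counterexample, not a case awaiting a dedicated argument.

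The paper's own proof is the one-line remark that Lemma~\ref{lem:ad2include} easily implies the lemma, and that derivation only makes sense when \emph{both} unions range over $x\in\agtr{G}{T}{r}$; the printed subscript $\dgtr{G}{T}{r}$ in the second union is evidently a slip (the analogous Lemma~\ref{lem:rootad2union} has the same feature). With the corrected index set the whole proof is two lines: for every $x\in\agtr{G}{T}{r}$, Lemma~\ref{lem:ad2include} gives $\agtr{G}{T}{x}\subseteq\agtr{G}{T}{r}$ and $\dgtr{G}{T}{x}\subseteq\dgtr{G}{T}{r}$, while the choice $x=r\in\agtr{G}{T}{r}$ yields the reverse inclusions, since $\agtr{G}{T}{r}$ and $\dgtr{G}{T}{r}$ themselves appear in the unions. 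None of the machinery you invoke for the second equality (Theorems~\ref{thm:icomp}, \ref{thm:seboprimal}, \ref{thm:unit4dist}, Lemma~\ref{lem:exext}) is needed once the index set is read correctly; as written, your proof of the second equality cannot be completed because the literal statement it targets is false.
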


Lemma~\ref{lem:dist2s} can easily be confirmed from Lemma~\ref{lem:extend}. 

\begin{lemma} \label{lem:dist2s} 
Let $(G, T)$ be a bipartite graft with color classes $A$ and $B$, and let $X \subseteq A$ be an extreme set.  
Let $(\hat{G}, \hat{T}):= \extend{G}{T}{X}{r}{s}$.   
Let $F\subseteq E(G)$ be a minimum join of $(G, T)$, and let $\hat{F} := F \cup \{rs\}$. 
Let $x\in V(G)$. 
Then, $\distgtf{\hat{G}}{\hat{T}}{\hat{F}}{x}{s} = \distgtf{\hat{G}}{\hat{T}}{\hat{F}}{x}{r} + 1$. 
\end{lemma}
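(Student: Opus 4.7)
The plan is to exploit the specific combinatorial structure of the rootlization $\hat{G}$ near the vertex $r$: by construction, the only edge incident to $r$ in $\hat{G}$ is $rs$, so $r$'s unique neighbor is $s$. This severely constrains paths in $\hat{G}$ that touch $r$ and provides a simple bijection between $x$-$s$ paths and $x$-$r$ paths that differ by exactly one edge.

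First, I would observe two structural facts. Since $r$ has $s$ as its only neighbor, any path from $x\in V(G)$ to $r$ must end with the edge $sr$; equivalently, such a path has the form $P + sr$, where $P$ is a path from $x$ to $s$ in $\hat{G}$ that avoids $r$. Conversely, no path from $x\in V(G)$ to $s$ can pass through $r$: entering and leaving $r$ would require using the single edge $rs$ twice, which is impossible for a path. Consequently, the set of $x$-$s$ paths in $\hat{G}$ is exactly the set of $x$-$s$ paths in $\hat{G}$ avoiding $r$, and appending the edge $sr$ gives a weight-preserving (up to an additive constant) bijection between these and the $x$-$r$ paths in $\hat{G}$.

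Since $rs \in \hat{F}$ by definition, we have $w_{\hat{F}}(sr) = -1$. Thus, for every $x$-$s$ path $P$ in $\hat{G}$, the path $P + sr$ is an $x$-$r$ path with $w_{\hat{F}}(P + sr) = w_{\hat{F}}(P) - 1$, and every $x$-$r$ path arises this way. Taking minima over all paths yields $\distgtf{\hat{G}}{\hat{T}}{\hat{F}}{x}{r} = \distgtf{\hat{G}}{\hat{T}}{\hat{F}}{x}{s} - 1$, which rearranges to the claimed identity.

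There is no serious obstacle here, consistent with the remark that the lemma follows easily from Lemma~\ref{lem:extend}: the only point requiring care is the verification that no $x$-$s$ path uses $r$, but this is immediate from the fact that $r$ has degree one in $\hat{G}$. If one prefers to invoke Lemma~\ref{lem:extend}(ii) instead of a direct path argument, an equivalent route is to apply symmetry of the distance function (Fact~\ref{fact:dist}) together with the explicit description of $\hat{F}$-shortest paths from $r$ given there, and to mirror that description for $\hat{F}$-shortest paths from $s$ using the same degree-one observation.
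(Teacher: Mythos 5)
Your proof is correct, and the key observation---that $r$ has degree one in $\hat{G}$, with $s$ its only neighbor---is exactly the structural fact that makes the lemma immediate. The bijection between $x$-$s$ paths and $x$-$r$ paths obtained by appending $sr$, combined with $w_{\hat F}(rs)=-1$, directly yields the claimed identity by taking minima.

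The paper supplies no explicit proof, only the remark that the lemma ``can easily be confirmed from Lemma~\ref{lem:extend}.'' Your direct argument is arguably simpler and more self-contained than the hinted route: it bypasses Lemma~\ref{lem:extend} entirely, using only the definition of the rootlization and of $\hat{F}$-distance, whereas the route through Lemma~\ref{lem:extend}(ii) requires transporting information about $\hat{F}$-shortest paths from $r$ to information about $\hat{F}$-shortest paths from $s$, which in the end comes back to the same degree-one observation you made explicit. Your closing paragraph already notes this equivalence. The only thing worth flagging is a minor redundancy: your sentence ``the set of $x$-$s$ paths in $\hat{G}$ is exactly the set of $x$-$s$ paths in $\hat{G}$ avoiding $r$'' is tautologically phrased; what you mean (and what the surrounding text makes clear) is that every $x$-$s$ path automatically avoids $r$, so the bijection is onto.
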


Lemma~\ref{lem:dist2s} easily implies Lemma~\ref{lem:ar2disjoint}.

\begin{lemma} \label{lem:ar2disjoint} 
Let $(G, T)$ be a bipartite graft with color classes $A$ and $B$, and let $X \subseteq A$ be an extreme set.  
Let $(\hat{G}, \hat{T}):= \extend{G}{T}{X}{r}{s}$.   
Let $F\subseteq E(G)$ be a minimum join of $(G, T)$, and let $\hat{F} := F \cup \{rs\}$. 
 Let $x\in \agtr{\hat{G}}{\hat{T}}{r} \setminus \{r\}$. 
 Then, 
$\distgtf{\hat{G}}{\hat{T}}{\hat{F}}{x}{s} \ge 1$ holds. 
Accordingly, 
$( \agtr{\hat{G}}{\hat{T}}{x} \cup  \dgtr{\hat{G}}{\hat{T}}{x}) \cap \{s, r\} = \emptyset$,  
$\agtr{\hat{G}}{\hat{T}}{x} = \agtr{G}{T}{x}$, 
and $\dgtr{\hat{G}}{\hat{T}}{x} = \dgtr{G}{T}{x}$. 
\end{lemma}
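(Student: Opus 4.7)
The plan is to extract the first conclusion mechanically from Lemma~\ref{lem:dist2s}, then leverage the fact that $s$ is cut off from the $\le 0$ region of $x$ to push both $r$ and $s$ out of the initial component of $x$ in $\hat{G}$, and finally argue that what remains of $\initialgtr{\hat{G}}{\hat{T}}{x}$ agrees with $\initialgtr{G}{T}{x}$.

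First I would observe that since $x \in \agtr{\hat{G}}{\hat{T}}{r}$, we have $\distgtf{\hat{G}}{\hat{T}}{\hat{F}}{x}{r} = 0$, and Lemma~\ref{lem:dist2s} applied to $x \in V(G)$ immediately yields $\distgtf{\hat{G}}{\hat{T}}{\hat{F}}{x}{s} = \distgtf{\hat{G}}{\hat{T}}{\hat{F}}{x}{r} + 1 = 1 \ge 1$, which is the first assertion.

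Next, for the disjointness statement, note that $\distgtf{\hat{G}}{\hat{T}}{\hat{F}}{x}{s} = 1 > 0$ forces $s \notin \laylegtr{\hat{G}}{\hat{T}}{x}{0}$ and in particular $s \notin V(\initialgtr{\hat{G}}{\hat{T}}{x})$. For $r$, although $\distgtf{\hat{G}}{\hat{T}}{\hat{F}}{x}{r} = 0$ so $r \in \laylegtr{\hat{G}}{\hat{T}}{x}{0}$, the only edge of $\hat{G}$ incident to $r$ is $rs$, and $s$ has just been excluded from $\laylegtr{\hat{G}}{\hat{T}}{x}{0}$; hence $r$ is isolated from $x$ in $\hat{G}[\laylegtr{\hat{G}}{\hat{T}}{x}{0}]$, and $r \notin V(\initialgtr{\hat{G}}{\hat{T}}{x})$. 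This yields $( \agtr{\hat{G}}{\hat{T}}{x} \cup \dgtr{\hat{G}}{\hat{T}}{x}) \cap \{s,r\} = \emptyset$, so $V(\initialgtr{\hat{G}}{\hat{T}}{x}) \subseteq V(G)$.

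For the final equalities, I would argue that $\distgtf{\hat{G}}{\hat{T}}{\hat{F}}{x}{y} = \distgtf{G}{T}{F}{x}{y}$ for every $y \in V(G)$ that is relevant, which combined with the containment above gives $\initialgtr{\hat{G}}{\hat{T}}{x} = \initialgtr{G}{T}{x}$ and hence the two equalities. The $(\le)$ direction is immediate from Lemma~\ref{lem:extend2sim}; for $(\ge)$, I would split into whether an $\hat{F}$-shortest $x$--$y$ path passes through $s$ or stays in $G$. In the latter case the path is in $G$, so $\distgtf{G}{T}{F}{x}{y}$ is attained. In the former case, splitting the path at $s$ into an $x$--$s$ segment of weight $\ge 1$ and an $s$--$y$ segment of weight $\ge \distgtf{\hat{G}}{\hat{T}}{\hat{F}}{s}{y} = \distgtf{\hat{G}}{\hat{T}}{\hat{F}}{r}{y}+1$ (Lemma~\ref{lem:dist2s} applied to $y$), together with Lemma~\ref{lem:ar} in $\hat{G}$ (which gives $\distgtf{\hat{G}}{\hat{T}}{\hat{F}}{r}{y} \le \distgtf{\hat{G}}{\hat{T}}{\hat{F}}{x}{y}$), constrains the through-$s$ weight so that it cannot be strictly smaller than the in-$G$ optimum.

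The main obstacle, as in the last paragraph, is ruling out shortcuts through $s$ that might shrink $\hat{G}$-distances within the $\le 0$ region of $x$; the key technical input here is the identity $\distgtf{\hat{G}}{\hat{T}}{\hat{F}}{s}{y} = \distgtf{\hat{G}}{\hat{T}}{\hat{F}}{r}{y} + 1$ from Lemma~\ref{lem:dist2s}, which together with $x \in \agtr{\hat{G}}{\hat{T}}{r}$ forces such a putative shortcut to violate Lemma~\ref{lem:ar}.
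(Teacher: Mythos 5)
Your first two steps are correct: Lemma~\ref{lem:dist2s} together with $\distgtf{\hat{G}}{\hat{T}}{\hat{F}}{r}{x}=0$ gives $\distgtf{\hat{G}}{\hat{T}}{\hat{F}}{x}{s}=1$, and the observation that $r$'s sole neighbor $s$ lies outside $\laylegtr{\hat{G}}{\hat{T}}{x}{0}$, so $r$ is isolated there, correctly excludes both $r$ and $s$ from $\initialgtr{\hat{G}}{\hat{T}}{x}$.

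The argument for the final equalities, however, has a genuine gap. You want to rule out that an $\hat{F}$-shortest $x$--$y$ path $P$ through $s$ is strictly shorter than the best path in $G$. The bounds you collect are $w_{\hat{F}}(P)\ge 1+\distgtf{\hat{G}}{\hat{T}}{\hat{F}}{s}{y}=2+\distgtf{\hat{G}}{\hat{T}}{\hat{F}}{r}{y}$ and, from Lemma~\ref{lem:ar}, $\distgtf{\hat{G}}{\hat{T}}{\hat{F}}{r}{y}\le\distgtf{\hat{G}}{\hat{T}}{\hat{F}}{x}{y}$. Together these only say $\distgtf{\hat{G}}{\hat{T}}{\hat{F}}{x}{y}\ge 2+\distgtf{\hat{G}}{\hat{T}}{\hat{F}}{r}{y}$, i.e., $\distgtf{\hat{G}}{\hat{T}}{\hat{F}}{r}{y}\le\distgtf{\hat{G}}{\hat{T}}{\hat{F}}{x}{y}-2$. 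That is entirely consistent with $\distgtf{\hat{G}}{\hat{T}}{\hat{F}}{x}{y}<\distgtf{G}{T}{F}{x}{y}$ when $\distgtf{\hat{G}}{\hat{T}}{\hat{F}}{r}{y}$ is much smaller than $\distgtf{\hat{G}}{\hat{T}}{\hat{F}}{x}{y}$; nothing in the inequalities ties the through-$s$ weight to $\distgtf{G}{T}{F}{x}{y}$ at all. So ``constrains the through-$s$ weight so that it cannot be strictly smaller than the in-$G$ optimum'' does not follow.

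What does close the argument is Theorem~\ref{thm:path2cut}~(i) applied in $(\hat{G},\hat{T})$ with root $x$ and $K=\initialgtr{\hat{G}}{\hat{T}}{x}$: for $y\in V(K)$, every $\hat{F}$-shortest $x$--$y$ path has $E(P)\subseteq E(K)$, and since you have already shown $r,s\notin V(K)$, such a path lies entirely in $G$. Combined with Lemma~\ref{lem:extend2sim} this gives $\distgtf{G}{T}{F}{x}{y}=\distgtf{\hat{G}}{\hat{T}}{\hat{F}}{x}{y}$ for all $y\in V(K)$, after which $\initialgtr{\hat{G}}{\hat{T}}{x}=\initialgtr{G}{T}{x}$ (and hence the two $A$/$D$ equalities) follows the way you describe. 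Your opening steps supply exactly the hypothesis ($r,s\notin V(K)$) that makes Theorem~\ref{thm:path2cut} deliver the needed path containment; the missing idea is to invoke that theorem rather than trying to bound through-$s$ weights directly.
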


Lemmas~\ref{lem:ad2include} and \ref{lem:ar2disjoint} easily imply Lemma~\ref{lem:rootad2include}.

\begin{lemma} \label{lem:rootad2include} 
Let $(G, T)$ be a bipartite graft with color classes $A$ and $B$, and let $X \subseteq A$ be an extreme set. 
Let $x\in \agtr{G}{T}{X}$.  
Then, $\agtr{G}{T}{x} \subseteq \agtr{G}{T}{X}$ and $\dgtr{G}{T}{x} \subseteq \dgtr{G}{T}{X}$ hold. 
\end{lemma}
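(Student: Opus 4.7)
The plan is to reduce the claim to the single-root setting via rootlization, where Lemma~\ref{lem:ad2include} applies directly. I would set $(\hat{G}, \hat{T}) := \extend{G}{T}{X}{r}{s}$, which is legitimate because $X$ is assumed extreme. By Observation~\ref{obs:initreduc}, $\agtr{\hat{G}}{\hat{T}}{r} = \agtr{G}{T}{X} \cup \{r\}$ and $\dgtr{\hat{G}}{\hat{T}}{r} = \dgtr{G}{T}{X} \cup \{s\}$, so the hypothesis $x \in \agtr{G}{T}{X}$ places $x$ inside $\agtr{\hat{G}}{\hat{T}}{r} \setminus \{r\}$. This is exactly the configuration needed to invoke the two preceding lemmas.

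Next, I would apply Lemma~\ref{lem:ad2include} to the rootlized graft $(\hat{G}, \hat{T})$ with root $r$ and the vertex $x$, obtaining
\[
\agtr{\hat{G}}{\hat{T}}{x} \subseteq \agtr{\hat{G}}{\hat{T}}{r} \quad \text{and} \quad \dgtr{\hat{G}}{\hat{T}}{x} \subseteq \dgtr{\hat{G}}{\hat{T}}{r}.
\]
Then Lemma~\ref{lem:ar2disjoint}, which is available because $x \in \agtr{\hat{G}}{\hat{T}}{r} \setminus \{r\}$, identifies the left-hand sides with the corresponding sets in the original graft: $\agtr{\hat{G}}{\hat{T}}{x} = \agtr{G}{T}{x}$ and $\dgtr{\hat{G}}{\hat{T}}{x} = \dgtr{G}{T}{x}$.

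Finally, combining the two displays above with Observation~\ref{obs:initreduc} gives $\agtr{G}{T}{x} \subseteq \agtr{G}{T}{X} \cup \{r\}$ and $\dgtr{G}{T}{x} \subseteq \dgtr{G}{T}{X} \cup \{s\}$. Since $r$ and $s$ are the two new vertices introduced by rootlization and therefore lie outside $V(G)$, neither can belong to $\agtr{G}{T}{x}$ nor to $\dgtr{G}{T}{x}$; the stray elements drop out and the desired inclusions $\agtr{G}{T}{x} \subseteq \agtr{G}{T}{X}$ and $\dgtr{G}{T}{x} \subseteq \dgtr{G}{T}{X}$ follow.

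I do not anticipate any genuine obstacle: the argument is a routine transfer of Lemma~\ref{lem:ad2include}, which handles a single root, to the extreme-root-set version via the rootlization construction, and Lemma~\ref{lem:ar2disjoint} was precisely engineered to make this transfer immediate. The only point warranting minor care is remembering that the extremeness of $X$ is what licenses the use of Lemma~\ref{lem:extend} (and hence of Observation~\ref{obs:initreduc} and Lemma~\ref{lem:ar2disjoint}) in the first place.
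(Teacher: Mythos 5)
Your proposal is correct and follows essentially the same route as the paper's own proof: rootlize by the mount $X$, apply Lemma~\ref{lem:ad2include} to the root $r$ in $(\hat{G},\hat{T})$, and use Lemma~\ref{lem:ar2disjoint} together with Observation~\ref{obs:initreduc} to transfer the inclusions back to $(G,T)$. No gaps.
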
 
\begin{proof} 
Let $(\hat{G}, \hat{T}) := \extend{G}{T}{X}{r}{s}$. 
Note $\agtr{\hat{G}}{\hat{T}}{r} = \agtr{G}{T}{X} \dot\cup \{r\}$ and $\dgtr{\hat{G}}{\hat{T}}{r} = \dgtr{G}{T}{X} \dot\cup \{s\}$.  
Lemma~\ref{lem:ad2include} implies $\agtr{\hat{G}}{\hat{T}}{x} \subseteq \agtr{\hat{G}}{\hat{T}}{r}$  
and $\dgtr{\hat{G}}{\hat{T}}{x} \subseteq \dgtr{\hat{G}}{\hat{T}}{r}$. 
From Lemma~\ref{lem:ar2disjoint}, 
these imply 
$\agtr{\hat{G}}{\hat{T}}{x} \subseteq \agtr{G}{T}{X}$  
and $\dgtr{\hat{G}}{\hat{T}}{x} \subseteq \dgtr{G}{T}{X}$.  
Additionally, $\agtr{\hat{G}}{\hat{T}}{x} = \agtr{G}{T}{x}$, and $\dgtr{\hat{G}}{\hat{T}}{x} = \dgtr{G}{T}{x}$.
Accordingly, the claim is proved. 
\end{proof}

Lemma~\ref{lem:ar2disjoint} immediately implies Lemma~\ref{lem:rootad2union}.

\begin{lemma} \label{lem:rootad2union} 
Let $(G, T)$ be a bipartite graft with color classes $A$ and $B$, and let $X \subseteq A$ be an extreme set.  
Then, 
$\agtr{G}{T}{X} = \bigcup_{x\in \agtr{G}{T}{X} } \agtr{G}{T}{x}$ 
and $\dgtr{G}{T}{X} = \bigcup_{x\in \dgtr{G}{T}{X} } \dgtr{G}{T}{x}$  hold. 
\end{lemma}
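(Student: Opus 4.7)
The plan is to lift the single-root identities of Lemma~\ref{lem:ad2union} to the extreme-set setting via rootlization. Set $(\hat{G}, \hat{T}) := \extend{G}{T}{X}{r}{s}$ and apply Lemma~\ref{lem:ad2union} to $(\hat{G}, \hat{T})$ with root $r$; this yields the identity $\agtr{\hat{G}}{\hat{T}}{r} = \bigcup_{x \in \agtr{\hat{G}}{\hat{T}}{r}} \agtr{\hat{G}}{\hat{T}}{x}$ and its $\dgtr$ analogue.

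Next, these identities in $(\hat{G}, \hat{T})$ are translated back to $(G, T)$ using two pieces of bookkeeping. Observation~\ref{obs:initreduc} rewrites the left-hand sides as $\agtr{\hat{G}}{\hat{T}}{r} = \agtr{G}{T}{X} \cup \{r\}$ and $\dgtr{\hat{G}}{\hat{T}}{r} = \dgtr{G}{T}{X} \cup \{s\}$, while Lemma~\ref{lem:ar2disjoint} replaces $\agtr{\hat{G}}{\hat{T}}{x}$ and $\dgtr{\hat{G}}{\hat{T}}{x}$ on the right-hand sides by $\agtr{G}{T}{x}$ and $\dgtr{G}{T}{x}$ for every $x \in \agtr{G}{T}{X}$. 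Substituting these and then stripping off the auxiliary vertices $r$ and $s$ from both sides of each equation leaves precisely the two identities claimed for $(G, T)$.

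The delicate step, and where the hypothesis that $X$ is homogeneous (with $X \subseteq A$) is consumed, is controlling the ``boundary'' index terms associated with $x \in \{r, s\}$ in the $(\hat{G}, \hat{T})$-decompositions, since Lemma~\ref{lem:ar2disjoint} covers only $x \in \agtr{\hat{G}}{\hat{T}}{r} \setminus \{r\} = \agtr{G}{T}{X}$. To handle these, I would compute $\agtr{\hat{G}}{\hat{T}}{r}$ and the terms indexed by $s$ directly from the definition of rootlization together with Lemma~\ref{lem:extend}, and verify that their contributions to $V(G)$ are already accounted for by the other index terms. Once this bookkeeping is settled, the two identities of Lemma~\ref{lem:rootad2union} follow at once.
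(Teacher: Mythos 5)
Your overall route---rootlize by $X$, apply the single-root Lemma~\ref{lem:ad2union} to $(\hat{G},\hat{T})=\extend{G}{T}{X}{r}{s}$, and translate back via Observation~\ref{obs:initreduc} and Lemma~\ref{lem:ar2disjoint}---is exactly the paper's route (the paper derives this lemma in one line from Lemma~\ref{lem:ar2disjoint}, and the companion Lemma~\ref{lem:rootad2include} is proved in precisely this rootlize-and-translate style). The gap is in the translation step for the second identity, and you have mislocated the ``delicate step.'' Lemma~\ref{lem:ar2disjoint} licenses the replacement $\dgtr{\hat{G}}{\hat{T}}{x}=\dgtr{G}{T}{x}$ only for $x\in\agtr{\hat{G}}{\hat{T}}{r}\setminus\{r\}=\agtr{G}{T}{X}$, whereas the second union is indexed over $\dgtr{G}{T}{X}$, i.e.\ over $\dgtr{\hat{G}}{\hat{T}}{r}=\dgtr{G}{T}{X}\cup\{s\}$ after rootlization; \emph{none} of those index vertices is covered, so the problematic terms are not merely $x\in\{r,s\}$ but the entire index set of the $D$-union. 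Indeed, with that index set the identity is false as written: for $G$ a single edge $ab$ with $T=\{a,b\}$ and $X=\{a\}$ one has $\dgtr{G}{T}{X}=\{b\}$ while $\bigcup_{x\in\dgtr{G}{T}{X}}\dgtr{G}{T}{x}=\dgtr{G}{T}{b}=\{a\}$. Consistently with Lemmas~\ref{lem:ad2include} and~\ref{lem:rootad2include}, the union must be read as indexed over $\agtr{G}{T}{X}$, and a proof that never engages with the index set cannot close this.

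Two further points, even under the corrected indexing. First, applying Lemma~\ref{lem:ad2union} in $(\hat{G},\hat{T})$ buys less than you suggest: the term $x=r$ of each union equals the whole left-hand side ($\agtr{\hat{G}}{\hat{T}}{r}$, resp.\ $\dgtr{\hat{G}}{\hat{T}}{r}$), so after ``stripping off $r$ and $s$'' the rootlized identity yields only the containment of $\bigcup_{x\in\agtr{G}{T}{X}}\agtr{G}{T}{x}$ (resp.\ $\bigcup_{x\in\agtr{G}{T}{X}}\dgtr{G}{T}{x}$) in the left-hand side---which is exactly Lemma~\ref{lem:rootad2include}---and the reverse inclusions still need separate arguments: trivial for the $A$-identity since $x\in\agtr{G}{T}{x}$, but requiring an actual argument that every $y\in\dgtr{G}{T}{X}$ lies in $\dgtr{G}{T}{x}$ for some $x\in\agtr{G}{T}{X}$, which your sketch never addresses. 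Second, the hypothesis $X\subseteq A$ is not consumed in the boundary bookkeeping: it is what makes $\hat{G}$ bipartite (an attachment $s$ joined to both colour classes would create odd circuits), and without it neither Lemma~\ref{lem:ad2union} nor Lemma~\ref{lem:ar2disjoint} could be invoked for $(\hat{G},\hat{T})$ at all.
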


\section{Initial Subgraph of Heterogeneous Extreme Root Set}

In this section, we provide and prove Lemmas~\ref{lem:icomp2disjoint}, \ref{lem:bi2ext}, and \ref{lem:heteroad2disjoint}  
and then derive Theorem~\ref{thm:hetero}.  
Lemma~\ref{lem:icomp2disjoint} is implied from Lemma~\ref{lem:ar}.

\begin{lemma} \label{lem:icomp2disjoint}
Let $(G, T)$ be a bipartite graft, and let $r\in V(G)$.   
Let $x\in V(G)$ be a vertex with $\distgt{G}{T}{r}{x} > 0$.  
Then, $\initialgtr{G}{T}{x}$ and  $\initialgtr{G}{T}{r}$ are disjoint. 
\end{lemma}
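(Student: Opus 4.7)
The plan is to assume for contradiction some $v\in V(\initialgtr{G}{T}{r})\cap V(\initialgtr{G}{T}{x})$ and derive $\distgt{G}{T}{r}{x}\le 0$. Since $v$ lies in both initial components, $\distgt{G}{T}{r}{v}\le 0$ and, by the symmetry of $F$-distance inherited from Fact~\ref{fact:dist}, $\distgt{G}{T}{v}{x}\le 0$.

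The easy subcase $v\in\agtr{G}{T}{r}$ (or, symmetrically, $v\in\agtr{G}{T}{x}$) is settled by Lemma~\ref{lem:ar} alone: applied with root $r$ and the vertex $v$, it gives $\distgt{G}{T}{r}{x}\le \distgt{G}{T}{v}{x}\le 0$, contradicting the hypothesis. For the remaining subcase $v\in\dgtr{G}{T}{r}\cap\dgtr{G}{T}{x}$, fix a minimum join $F$ and take an $F$-shortest $r$-$v$ path $P$; by Theorem~\ref{thm:path2cut}(i) we have $V(P)\subseteq V(\initialgtr{G}{T}{r})$. Let $u$ be the last vertex of $P$, traversed from $r$, that belongs to $\agtr{G}{T}{r}$. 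Since $\agtr{G}{T}{r}$ is extreme in $(G,T)$ by Lemma~\ref{lem:exext}\ref{item:ext}, the subpath $rPu$ satisfies $w_F(rPu)\ge 0$, so $w_F(uPv)\le w_F(P)=\distgt{G}{T}{r}{v}<0$. This produces $u\in\agtr{G}{T}{r}$ with $\distgt{G}{T}{u}{v}<0$.

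To finish, I would concatenate an $F$-shortest $u$-$v$ path with an $F$-shortest $v$-$x$ path into a walk from $u$ to $x$ of $F$-weight strictly less than $0$, and then reduce this walk to a simple $u$-$x$ path $P^{*}$ by discarding closed subwalks. Each such closed subwalk decomposes into circuits, which have nonnegative $F$-weight by Lemma~\ref{lem:minimumjoin}, so $w_{F}(P^{*})<0$ and hence $\distgt{G}{T}{u}{x}<0$. Lemma~\ref{lem:ar} applied to $u\in\agtr{G}{T}{r}$ then yields $\distgt{G}{T}{r}{x}\le \distgt{G}{T}{u}{x}<0$, the desired contradiction. The main obstacle is this walk-to-path reduction---essentially a triangle-inequality-style step not singled out in the excerpt, but a routine consequence of Lemma~\ref{lem:minimumjoin}.
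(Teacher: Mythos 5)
Your first two paragraphs are sound, and the easy subcase disposed of by Lemma~\ref{lem:ar} is exactly the paper's (very terse) argument. The gap is in the final walk-to-path reduction: the asserted ``triangle inequality'' $\distgt{G}{T}{u}{x}\le\distgt{G}{T}{u}{v}+\distgt{G}{T}{v}{x}$ is false for $F$-distances, even in bipartite grafts. When you concatenate an $F$-shortest $u$-$v$ path with an $F$-shortest $v$-$x$ path, the two may share edges; the closed subwalks you would discard then traverse such an edge $e$ twice, giving $F$-weight $2w_F(e)=-2$ if $e\in F$. That is a closed walk but \emph{not} a circuit of $G$, so Lemma~\ref{lem:minimumjoin} does not apply, and discarding it can \emph{increase} the weight. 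A concrete counterexample: let $G$ be the $6$-cycle $v_1\cdots v_6$, $T=\{v_1,v_4\}$, $F=\{v_1v_2,v_2v_3,v_3v_4\}$. Then $\distgt{G}{T}{v_2}{v_4}=-2$, $\distgt{G}{T}{v_4}{v_6}=-2$, but $\distgt{G}{T}{v_2}{v_6}=0$, so $0\not\le -4$. (In your setting the failure is forced: Lemma~\ref{lem:ar} with root $r$ and $u\in\agtr{G}{T}{r}$ gives $\distgt{G}{T}{u}{x}\ge\distgt{G}{T}{r}{x}>0$, so $\distgt{G}{T}{u}{x}<0$ can never be derived correctly.)

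The remaining case can instead be closed by a separation argument rather than a metric one. If $v\in\dgtr{G}{T}{r}$, note that by Theorem~\ref{thm:neigh2dist} every vertex of $V(\initialgtr{G}{T}{r})$ adjacent to a vertex outside $V(\initialgtr{G}{T}{r})$ lies in $\levelgtr{G}{T}{r}{0}$, i.e.\ $\parNei{G}{\dgtr{G}{T}{r}}\subseteq\agtr{G}{T}{r}$, so $\agtr{G}{T}{r}$ separates $\dgtr{G}{T}{r}$ from the rest of $G$. Since $\initialgtr{G}{T}{x}$ is connected, contains $x\notin V(\initialgtr{G}{T}{r})$ (because $\distgt{G}{T}{r}{x}>0$), and would contain $v\in\dgtr{G}{T}{r}$, it must contain some vertex of $\agtr{G}{T}{r}$; but your easy subcase (Lemma~\ref{lem:ar} applied with root $r$) shows $\agtr{G}{T}{r}\cap V(\initialgtr{G}{T}{x})=\emptyset$. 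This is precisely what the paper's one-line proof is invoking when it says that $\distgt{G}{T}{r'}{x}>0$ for all $r'\in\agtr{G}{T}{r}$ ``implies the claim.''
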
 
\begin{proof}  
Lemma~\ref{lem:ar} implies $\distgt{G}{T}{r'}{x} > 0$ for every $r' \in \agtr{G}{T}{r}$. 
This implies the claim. 
\end{proof}

Lemma~\ref{lem:bi2ext} is implied from Lemmas~\ref{lem:ar} and \ref{lem:extend}.

\begin{lemma} \label{lem:bi2ext} 
Let $(G, T)$ be a bipartite graft with color classes $A$ and $B$, and let $X \subseteq V(G)$ be an extreme set. 
Then,  
$\agtr{G}{T}{X \cap A} \cup \agtr{G}{T}{X \cap B}$ is extreme. 
\end{lemma}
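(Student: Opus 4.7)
The plan is to show that any two $x, y \in R := \agtr{G}{T}{X \cap A} \cup \agtr{G}{T}{X \cap B}$ satisfy $\distgt{G}{T}{x}{y} \ge 0$. First note that $X \cap A$ and $X \cap B$ inherit extremeness from $X$, so the symbols $\agtr{G}{T}{X \cap A}$ and $\agtr{G}{T}{X \cap B}$ are meaningful as written.

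The crux is the following set-analogue of Lemma~\ref{lem:ar}: for any extreme $R' \subseteq V(G)$, any $x \in \agtr{G}{T}{R'}$, and any $y \in V(G)$, $\setdistgt{G}{T}{R'}{y} \le \distgt{G}{T}{x}{y}$. I would establish this by passing to the rootlization $(\hat{G}, \hat{T}) := \extend{G}{T}{R'}{r}{s}$. By Observation~\ref{obs:initreduc}, $x \in \agtr{\hat{G}}{\hat{T}}{r}$, so Lemma~\ref{lem:ar} applied in $(\hat{G}, \hat{T})$ gives $\distgt{\hat{G}}{\hat{T}}{r}{y} \le \distgt{\hat{G}}{\hat{T}}{x}{y}$; combining with $\distgt{\hat{G}}{\hat{T}}{r}{y} = \setdistgt{G}{T}{R'}{y}$ (the root-set-distance observation) and $\distgt{\hat{G}}{\hat{T}}{x}{y} \le \distgt{G}{T}{x}{y}$ (Lemma~\ref{lem:extend2sim}) yields the claim.

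With this in hand, take $x, y \in R$ and assume without loss of generality $x \in \agtr{G}{T}{X \cap A}$. The set-analogue applied with $R' = X \cap A$ gives $\setdistgt{G}{T}{X \cap A}{y} \le \distgt{G}{T}{x}{y}$, so it suffices to prove $\setdistgt{G}{T}{X \cap A}{y} \ge 0$. If $y \in \agtr{G}{T}{X \cap A}$, this is $0$ directly from the definition. Otherwise $y \in \agtr{G}{T}{X \cap B}$; then for each $a \in X \cap A$, the set-analogue applied with $R' = X \cap B$ to the pair $(y, a)$ gives $\setdistgt{G}{T}{X \cap B}{a} \le \distgt{G}{T}{y}{a}$. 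Since $X$ is extreme, $\distgt{G}{T}{b}{a} \ge 0$ for every $b \in X \cap B$, so $\setdistgt{G}{T}{X \cap B}{a} \ge 0$ and hence $\distgt{G}{T}{a}{y} \ge 0$. Taking the minimum over $a \in X \cap A$ yields $\setdistgt{G}{T}{X \cap A}{y} \ge 0$, closing the case.

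The main obstacle is the mixed case $x \in \agtr{G}{T}{X \cap A}$, $y \in \agtr{G}{T}{X \cap B}$: no single rootlization directly controls $\distgt{G}{T}{x}{y}$, so one must apply the set-analogue of Lemma~\ref{lem:ar} twice — once pushing $x$ back to $X \cap A$ and once pushing $y$ back to $X \cap B$ — and close the argument via the extremeness of $X$ itself, which is precisely what supplies non-negativity \emph{across} the two color classes.
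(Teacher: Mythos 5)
Your proposal is correct and follows essentially the same strategy as the paper's proof: the ``set-analogue of Lemma~\ref{lem:ar}'' you formulate is exactly the content of the paper's Claim~\ref{claim:bi2ext}, established there too by rootlizing over the homogeneous set and invoking Lemma~\ref{lem:ar} in $\extend{G}{T}{X\cap A}{r}{s}$ together with Lemma~\ref{lem:extend}. Your chaining of the two one-sided bounds through the extremeness of $X$ is in fact a touch cleaner than the paper's final step (which appears to apply part (ii) of its claim with $z=x$ rather than with the intermediate vertex $x_0$, leaving the chain slightly underspecified), but the underlying idea is identical.
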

\begin{proof} 
Let $F$ be a minimum join of $(G, T)$.  

\begin{pclaim} \label{claim:bi2ext} 
The following two properties hold: 
\begin{rmenum} 
\item \label{item:bi2ext:a} For every $x \in \agtr{G}{T}{X\cap A}$ and every $z\in V(G)$, 
there exists $x_0 \in X\cap A$ with $\distgtf{G}{T}{F}{x}{z} \ge \distgtf{G}{T}{F}{x_0}{z}$.   
\item \label{item:bi2ext:b} For every $y \in \agtr{G}{T}{X\cap B}$ and every $z\in V(G)$,  
there exists $y_0 \in X\cap B$ with $\distgtf{G}{T}{F}{z}{y} \ge \distgtf{G}{T}{F}{z}{y_0}$.   
\end{rmenum} 
\end{pclaim} 
\begin{proof} 
First, we prove \ref{item:bi2ext:a}.  
Let $(\hat{G}, \hat{T}) := \extend{G}{T}{ X\cap A }{r}{s}$.  
Let $\hat{F} := F \cup \{ rs\}$. Lemma~\ref{lem:extend} implies that $F$ is a minimum join of $(\hat{G}, \hat{T})$.

Let $z\in V(G)$. 
From Lemmas~\ref{lem:ar} and \ref{lem:extend},  for every $x \in \agtr{\hat{G}}{\hat{T}}{r}$, 
 $\distgtf{\hat{G}}{\hat{T}}{\hat{F}}{x}{z} \ge \distgtf{\hat{G}}{\hat{T}}{\hat{F}}{r}{z} = \min_{x \in X\cap A} \distgtf{G}{T}{F}{x}{z}$.  

Lemma~\ref{lem:extend} also implies $\distgtf{\hat{G}}{\hat{T}}{\hat{F}}{x}{z} \le \distgtf{G}{T}{F}{x}{z}$.   
Thus, we have $\distgtf{G}{T}{F}{x}{z} \ge \min_{x' \in X\cap A} \distgtf{G}{T}{F}{x'}{z}$. 
This implies \ref{item:bi2ext:a}. 

The same argument proves \ref{item:bi2ext:b}. 
\end{proof}

Let $x \in \agtr{G}{T}{X\cap A}$ and $y \in \agtr{G}{T}{X\cap B}$. 
Furthermore, by Claim~\ref{claim:bi2ext} \ref{item:bi2ext:a}, there exists a vertex $x_0 \in X\cap A$ with 
$\distgtf{G}{T}{F}{x}{y} \ge \distgtf{G}{T}{F}{x_0}{y}$. 
From Claim~\ref{claim:bi2ext} \ref{item:bi2ext:b},  
there exists a vertex $y_0 \in X\cap B$ with  $\distgtf{G}{T}{F}{x}{y} \ge \distgtf{G}{T}{F}{x}{y_0}$. 
Because $X\cup Y$ is extreme, we also have $\distgtf{G}{T}{F}{x_0}{y_0} > 0$. 
These inequalities imply $\distgtf{G}{T}{F}{x}{y} >0$. 
Thus, it follows that $\agtr{G}{T}{X\cap A} \cup \agtr{G}{T}{X\cap B}$ is extreme.  
The lemma is proved. 
\end{proof}

Lemmas~\ref{lem:rootad2union}, \ref{lem:icomp2disjoint}, and \ref{lem:bi2ext} imply Lemma~\ref{lem:heteroad2disjoint}.

\begin{lemma} \label{lem:heteroad2disjoint} 
Let $(G, T)$ be a bipartite graft with color classes $A$ and $B$, and let $X \subseteq V(G)$ be an extreme set. 
Then, 
$\agtr{G}{T}{X} = \agtr{G}{T}{X\cap A} \dot\cup \agtr{G}{T}{X\cap B}$, 
and $\dgtr{G}{T}{X} = \dgtr{G}{T}{X\cap A} \dot\cup \dgtr{G}{T}{X\cap B}$.    
\end{lemma}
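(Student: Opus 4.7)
The plan is to prove the equalities by separately establishing disjointness of the right-hand-side unions, the forward inclusion $\supseteq$, and the reverse inclusion $\subseteq$.

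For disjointness, the $\agtr$-pieces are disjoint by bipartite parity: $X\cap A \subseteq A$ combined with Theorem~\ref{thm:neigh2dist} gives $\agtr{G}{T}{X\cap A} \subseteq A$, and symmetrically $\agtr{G}{T}{X\cap B} \subseteq B$. For the $\dgtr$-pieces, I invoke Lemma~\ref{lem:bi2ext}: the set $Y := \agtr{G}{T}{X\cap A} \cup \agtr{G}{T}{X\cap B}$ is extreme. Then for any $a \in \agtr{G}{T}{X\cap A} \subseteq A$ and $b \in \agtr{G}{T}{X\cap B} \subseteq B$, $\distgt{G}{T}{a}{b}$ is simultaneously odd (bipartite parity) and non-negative (extremality of $Y$), hence at least $1$. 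By Lemma~\ref{lem:icomp2disjoint}, $V(\initialgtr{G}{T}{a}) \cap V(\initialgtr{G}{T}{b}) = \emptyset$ for every such pair. Using Lemma~\ref{lem:rootad2union} in combination with Lemma~\ref{lem:rootad2include}, both $\dgtr{G}{T}{X\cap A}$ and $\dgtr{G}{T}{X\cap B}$ fit inside the corresponding unions of single-root initial subgraphs, which are pairwise disjoint by the preceding observation.

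For the inclusion $\supseteq$, given $v \in \agtr{G}{T}{X\cap A}$, the connected component $K$ of $\laylegtr{G}{T}{X\cap A}{0}$ containing $v$ meets $X \cap A$; since $\laylegtr{G}{T}{X\cap A}{0} \subseteq \laylegtr{G}{T}{X}{0}$, the component of $\laylegtr{G}{T}{X}{0}$ extending $K$ still meets $X \cap A \subseteq X$, placing $v$ in $V(\initialgtr{G}{T}{X})$. A negative $\setdistgt{G}{T}{X}{v}$ would require a witness $y \in X$ with $\distgt{G}{T}{y}{v} < 0$; such $y$ cannot lie in $X \cap A$ (which would contradict $\setdistgt{G}{T}{X\cap A}{v} = 0$), and $y \in X \cap B \subseteq \agtr{G}{T}{X\cap B}$ violates the extremality of $Y$ from Lemma~\ref{lem:bi2ext}. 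Thus $v \in \agtr{G}{T}{X}$, and the analogous reasoning yields the $\dgtr$-inclusion.

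The main obstacle is the reverse inclusion $\subseteq$. For $v \in \agtr{G}{T}{X}$ with $v \in A$ (the case $v \in B$ is symmetric), bipartite parity forces $\setdistgt{G}{T}{X\cap A}{v} = 0$, since $\setdistgt{G}{T}{X\cap B}{v}$ is odd and cannot realize the minimum value $0$. The delicate remaining step is to certify that $v$ actually lies in the initial subgraph of $X \cap A$, not merely at set-distance $0$: its component in $\laylegtr{G}{T}{X\cap A}{0}$ must reach $X \cap A$. I would accomplish this by rootlizing $X$ into $\extend{G}{T}{X}{r}{s}$ via Observation~\ref{obs:initreduc}, reducing to a single-root analysis in $\hat{G}$ where Lemma~\ref{lem:rootad2union} combined with Lemma~\ref{lem:rootad2include} and bipartite parity tracking pins $v$'s component within $\laylegtr{G}{T}{X\cap A}{0}$ to some vertex of $X \cap A$, yielding $v \in \agtr{G}{T}{X\cap A}$. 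The $\dgtr$-case proceeds along the identical template.
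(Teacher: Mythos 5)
Your overall structure (pairwise disjointness, then $\supseteq$, then $\subseteq$) is sound, and your disjointness and $\supseteq$ arguments line up with the paper's: the paper also derives disjointness from Lemma~\ref{lem:bi2ext} (extremality plus bipartite parity gives $\distgt{G}{T}{x}{y}>0$ between the two $\agtr$-pieces) and Lemma~\ref{lem:icomp2disjoint}, with Lemma~\ref{lem:rootad2union} supplying the single-root decomposition. Your $\supseteq$ step --- using $\laylegtr{G}{T}{X\cap A}{0}\subseteq\laylegtr{G}{T}{X}{0}$ to propagate component membership, and Lemma~\ref{lem:bi2ext} to rule out a witness $y\in X\cap B$ with $\distgt{G}{T}{y}{v}<0$ --- is correct.

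The genuine gap is the $\subseteq$ direction, and the route you sketch for it does not work as stated. If $X$ meets both color classes then $\extend{G}{T}{X}{r}{s}$ is \emph{not} a bipartite graft: the attachment $s$ is adjacent to vertices of both $A$ and $B$, so an odd $A$--$B$ path between two vertices of $X$, together with the two edges to $s$, forms an odd circuit through $s$. Consequently none of the bipartite machinery you invoke in this rootlization --- Lemmas~\ref{lem:ad2include}, \ref{lem:ad2union}, \ref{lem:rootad2include}, \ref{lem:rootad2union}, Theorem~\ref{thm:neigh2dist} --- is applicable to $(\hat G,\hat T)$. (Observation~\ref{obs:initreduc} does survive, since it is purely a statement about sublevel sets and their components, but it does not by itself force $v$'s component of $\laylegtr{G}{T}{X\cap A}{0}$ to reach $X\cap A$.) So ``bipartite parity tracking in $\hat G$'' is exactly what you cannot do.

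A clean way to close $\subseteq$ is to rootlize the \emph{homogeneous} part, not $X$ itself. Let $v\in\agtr{G}{T}{X}$ with, say, $v\in A$; parity gives $\setdistgt{G}{T}{X\cap A}{v}=0$ as you observed. Form $(\hat G_A,\hat T_A):=\extend{G}{T}{X\cap A}{r_A}{s_A}$, which \emph{is} bipartite, with minimum join $\hat F_A:=F\cup\{r_As_A\}$. Then $\distgt{\hat G_A}{\hat T_A}{r_A}{v}=0$. If $v$ were outside $\initialgtr{\hat G_A}{\hat T_A}{r_A}$, it would lie in some other member $K'$ of $\laycompgtr{\hat G_A}{\hat T_A}{r_A}{0}$, and Theorem~\ref{thm:path2cut}~(ii) would force an $\hat F_A$-shortest $r_A$--$v$ path to cross $\parcut{\hat G_A}{\initialgtr{\hat G_A}{\hat T_A}{r_A}}$ through an edge of $\hat F_A$; but Theorem~\ref{thm:sebocut}~(ii) gives $\parcut{\hat G_A}{\initialgtr{\hat G_A}{\hat T_A}{r_A}}\cap\hat F_A=\emptyset$, a contradiction. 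Hence $v\in V(\initialgtr{\hat G_A}{\hat T_A}{r_A})$, and by Observation~\ref{obs:initreduc}, $v\in\agtr{G}{T}{X\cap A}$. The identical argument with $\setdistgt{G}{T}{X\cap A}{v}<0$ or $\setdistgt{G}{T}{X\cap B}{v}<0$ handles the $\dgtr$-containment, with disjointness ruling out both being negative at once.
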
 
\begin{proof} 
From Lemma~\ref{lem:bi2ext},  
for every $x\in \agtr{G}{T}{X\cap A}$ and every $y\in \agtr{G}{T}{X\cap B}$, 
we have $\distgt{G}{T}{x}{y} > 0$. 
Hence, by Lemma~\ref{lem:icomp2disjoint}, it follows that  $\agtr{G}{T}{x} \cap \agtr{G}{T}{y} = \emptyset$ 
and $\dgtr{G}{T}{x} \cap \agtr{G}{T}{y} = \emptyset$. 
Therefore, Lemma~\ref{lem:rootad2union}  
implies $\agtr{G}{T}{X} = \agtr{G}{T}{X\cap A} \dot\cup \agtr{G}{T}{X\cap B}$  
and $\dgtr{G}{T}{X} = \dgtr{G}{T}{X\cap A} \dot\cup \dgtr{G}{T}{X\cap B}$.    
The lemma is proved. 
\end{proof}

 Lemma~\ref{lem:hsetar2partition} and \ref{lem:heteroad2disjoint} easily imply Theorem~\ref{thm:hetero}. 

\begin{theorem} \label{thm:hetero} 
Let $(G, T)$ be a bipartite graft with color classes $A$ and $B$, and let $X \subseteq V(G)$ be an extreme set. 
Then,  there exist $S_1, \ldots, S_k \in\tpart{G}{T}$, where $k\ge 1$, 
such that $\agtr{G}{T}{X} = S_1 \dot\cup \cdots \dot\cup S_k$, 
$\dgtr{G}{T}{X} = \coupgt{G}{T}{S_1} \dot\cup \cdots \dot\cup  \coupgt{G}{T}{S_k}$, and 
$\conng{G}{\dgtr{G}{T}{X}} = \conng{G}{\coupgt{G}{T}{S_1}} \dot\cup \cdots \dot\cup \conng{G}{\coupgt{G}{T}{S_k}}$.   
\end{theorem}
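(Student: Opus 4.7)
The plan is to reduce the general extreme root set $X$ to two homogeneous cases by splitting along the color classes. The key ingredients are Lemma~\ref{lem:heteroad2disjoint} (the $A/B$ split) and Lemma~\ref{lem:hsetar2partition} (decomposition for homogeneous extreme root sets), together with a short edge-exclusion argument for the connected-components claim. As a preliminary observation, any subset of an extreme set is extreme, so both $X \cap A$ and $X \cap B$ are extreme.

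First I would apply Lemma~\ref{lem:heteroad2disjoint} to obtain the clean split $\agtr{G}{T}{X} = \agtr{G}{T}{X \cap A} \dot\cup \agtr{G}{T}{X \cap B}$ and $\dgtr{G}{T}{X} = \dgtr{G}{T}{X \cap A} \dot\cup \dgtr{G}{T}{X \cap B}$. Next I would invoke Lemma~\ref{lem:hsetar2partition} on $X \cap A \subseteq A$ to extract classes $S_1^A, \dotsc, S_{k_A}^A \in \tpart{G}{T}$ partitioning $\agtr{G}{T}{X \cap A}$, with corresponding decompositions of $\dgtr{G}{T}{X \cap A}$ and $\conng{G}{\dgtr{G}{T}{X \cap A}}$ via the critical sets $\coupgt{G}{T}{S_j^A}$. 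By the symmetry between the two color classes, the same lemma applied to $X \cap B \subseteq B$ yields analogous classes $S_1^B, \dotsc, S_{k_B}^B$. Concatenating the two lists gives the sought $S_1, \dotsc, S_k$, and the first two equalities of the theorem follow by substituting these single-color decompositions into the $A/B$ split.

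The subtle point, which I expect to be the main obstacle, is the connected-components equality: one must verify that no connected component of $G[\dgtr{G}{T}{X}]$ straddles the $A/B$ split, equivalently that no edge $uv \in E(G)$ has $u \in \dgtr{G}{T}{X \cap A}$ and $v \in \dgtr{G}{T}{X \cap B}$. My argument would be: pick $x \in X \cap A$ attaining $\distgt{G}{T}{x}{u} \le -1$; Theorem~\ref{thm:neigh2dist} then yields $\distgt{G}{T}{x}{v} \le 0$, placing $v$ in $\laylegtr{G}{T}{X \cap A}{0}$ and in the same component of $G[\laylegtr{G}{T}{X \cap A}{0}]$ as $u$, hence $v \in V(\initialgtr{G}{T}{X \cap A}) = \agtr{G}{T}{X \cap A} \cup \dgtr{G}{T}{X \cap A}$. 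Combined with $v \in \dgtr{G}{T}{X \cap B}$, Lemma~\ref{lem:heteroad2disjoint} together with the definitional disjointness $\agtr{G}{T}{X} \cap \dgtr{G}{T}{X} = \emptyset$ produces a contradiction. Once such cross-edges are excluded, $\conng{G}{\dgtr{G}{T}{X}}$ is the disjoint union of $\conng{G}{\dgtr{G}{T}{X \cap A}}$ and $\conng{G}{\dgtr{G}{T}{X \cap B}}$, and the third equality of the theorem follows by substitution.
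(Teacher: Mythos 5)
Your proof is correct and follows the paper's own route: the paper derives Theorem~\ref{thm:hetero} directly from Lemma~\ref{lem:heteroad2disjoint} (the $A/B$ split) and Lemma~\ref{lem:hsetar2partition} (the homogeneous case), exactly as you do. You additionally spell out the cross-edge exclusion argument that justifies the third (connected-components) equality, a detail the paper leaves implicit; that argument is sound.
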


\section{Structure of General Capital Components}

In this section, 
we investigate the structure of  capital components other than the initial component. 
We provide and prove Lemma~\ref{lem:neigh2posi} and then derive Theorems~\ref{thm:hinitial} and \ref{thm:capital}.

\begin{lemma} \label{lem:neigh2posi} 
Let $(G, T)$ be a bipartite graft, and let $r\in V(G)$.  
Let $i$ be an index with $i < \max_{x\in V(G)} \distgt{G}{T}{r}{x}$, 
and let $L$ be the member of $\laycompgtr{G}{T}{r}{i}$ with $r\in V(K)$. 
Then, $\distgt{G}{T}{x}{y} \ge 1$ holds for every $x \in V(L)\cap \levelgtr{G}{T}{r}{i}$ and every $y\in \parNei{G}{L}$. 
\end{lemma}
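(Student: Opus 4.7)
The plan is to argue by contradiction, concatenating an $F$-shortest path from $r$ to $x$ with an $F$-shortest path from $x$ to $y$ and using the non-negativity of circuit weights under a minimum join.

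First I would establish that $\distgt{G}{T}{r}{y} = i + 1$ for every $y \in \parNei{G}{L}$. Since $L$ is the connected component of $G[\laylegtr{G}{T}{r}{i}]$ containing $r$, any vertex of $\laylegtr{G}{T}{r}{i}$ adjacent to $V(L)$ must itself lie in $V(L)$; hence $y$ lies outside $\laylegtr{G}{T}{r}{i}$, so $\distgt{G}{T}{r}{y} \ge i + 1$. The edge from $y$ to a vertex of $V(L)$ of level at most $i$, combined with Theorem~\ref{thm:neigh2dist}, then forces $\distgt{G}{T}{r}{y} = i + 1$ (and the neighbor to lie at level exactly $i$).

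Next, fix $x \in V(L) \cap \levelgtr{G}{T}{r}{i}$ and $y \in \parNei{G}{L}$, and suppose for contradiction that $\distgt{G}{T}{x}{y} \le 0$. Let $F$ be a minimum join, let $P$ be an $F$-shortest path from $r$ to $x$, and let $R$ be an $F$-shortest path from $x$ to $y$. Then $w_F(P) = i$ and $w_F(R) \le 0$, so the concatenation $W$ of $P$ and $R$ is a walk from $r$ to $y$ with $w_F(W) \le i$.

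The final step is to convert $W$ into a simple path from $r$ to $y$ without increasing the $F$-weight. Any closed subwalk of $W$ has an underlying edge multiset in which every vertex has even degree, so it decomposes into edge-disjoint circuits; by Lemma~\ref{lem:minimumjoin} each such circuit has non-negative $F$-weight, and hence so does the closed subwalk. Iteratively shortcutting $W$ at repeated vertices therefore produces a path from $r$ to $y$ of $F$-weight at most $i$, contradicting $\distgt{G}{T}{r}{y} = i + 1$. The only delicate point is this walk-to-path reduction, which is a standard consequence of Lemma~\ref{lem:minimumjoin} together with the Eulerian decomposition of closed walks.
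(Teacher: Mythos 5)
Your first step (that $\distgt{G}{T}{r}{y} = i+1$ for $y\in\parNei{G}{L}$ and $\distgt{G}{T}{r}{x}=i$) is fine, but the walk-to-path reduction at the end has a genuine gap, and it is exactly the point you flag as delicate. When you shortcut the concatenated walk $W$ at a repeated vertex, the closed subwalk you delete may traverse the same edge of $G$ twice (this happens precisely when $R$ backtracks along $P$). A doubled edge is not a circuit of $G$, so Lemma~\ref{lem:minimumjoin} says nothing about it, and if that edge lies in $F$ its contribution to the closed subwalk is $-2$. Deleting a closed subwalk of negative $F$-weight \emph{increases} the weight of the remaining walk, so the reduction does not produce an $r$--$y$ path of $F$-weight at most $i$.

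This is not a repairable technicality: after your first paragraph the argument never uses that $x\in V(L)$ and $y\in\parNei{G}{L}$, so if it were valid it would prove the triangle inequality $\distgt{G}{T}{r}{y}\le \distgt{G}{T}{r}{x}+\distgt{G}{T}{x}{y}$ for arbitrary vertices, and that inequality is false for these distances. For instance, let $G$ be the star with center $a$ and leaves $r,x,y$, and let $T=\{a,x\}$; then $F=\{ax\}$ is the minimum join, $\distgt{G}{T}{r}{x}=\distgt{G}{T}{x}{y}=0$, yet $\distgt{G}{T}{r}{y}=2$. (Here $P$ and $R$ share the $F$-edge $ax$, which is exactly the failure mode above.) The paper's proof instead exploits the capital-component structure: it traces an $F$-shortest $x$--$y$ path $Q$ from $y$ to the first vertex $w$ of $V(L)$, gets $w_F(xQw)\ge 0$ from the extremality of $V(L)\cap\levelgtr{G}{T}{r}{i}$ (Lemma~\ref{lem:exext}), and gets $w_F(wQy)\ge 1$ from the fact that edges of $\parcut{G}{L}$ are not allowed (Theorem~\ref{thm:sebocut} with Lemma~\ref{lem:nonallowed2dist}) combined with the round-ear-path bound of Lemma~\ref{lem:ear2posi}. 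Some structural input of this kind is unavoidable.
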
 
\begin{proof} 
Let $x\in V(L) \cap \levelgtr{G}{T}{r}{i}$ and $y\in \parNei{G}{L}$. Let $z\in V(L)\cap \parNei{G}{y}$. 
Note that Theorem~\ref{thm:sebocut} implies that $yz$ is not allowed. 
Note also that Theorem~\ref{thm:neigh2dist} implies $z\in \levelgtr{G}{T}{r}{i}$. 
Let $F$ be a minimum join of $(G, T)$. 
Let $Q$ be an $F$-shortest path in $(G, T)$ between $x$ and $y$.  
Trace $Q$ from $y$, and let $w$ be the first vertex in $V(L)$. 
Theorem~\ref{thm:neigh2dist} implies  $w \in V(L)\cap \levelgtr{G}{T}{r}{i}$. 
Therefore, Lemma~\ref{lem:exext} implies $w_F(xQw) \ge 0$.    

First, consider the case $yz \in E(Q)$. 
This implies $w = z$ and $Q = xQw + zy$. 
Because  $wy\not\in F$ holds,  $w_F(Q) \ge 1$ follows.

Next, consider the case $yz \not\in E(Q)$. 
If $w = z$, then  Lemma~\ref{lem:nonallowed2dist} implies $w_F(wQy) \ge 1$. 
Otherwise, that is, if $w\neq z$, 
then $wQy + yz$ is a round ear path relative to $L$. 
Thus, Lemma~\ref{lem:ear2posi} implies $w_F(wQy + yz) \ge 2$. 
Accordingly, $w_F(wQy) \ge 1$. 
Therefore, in either case,  we obtain $w_F(Q) = w_F(xQw) + w_F(wQy) \ge 1$.    
This proves the lemma.  
\end{proof}

Lemma~\ref{lem:neigh2posi} implies Theorem~\ref{thm:hinitial}.

\begin{theorem} \label{thm:hinitial} 
Let $(G, T)$ be a bipartite graft, and let $r\in V(G)$.  
Let $i$ be an index with $i < \max_{x\in V(G)} \distgt{G}{T}{r}{x}$, 
and let $K$ be the member of $\laycompgtr{G}{T}{r}{i}$ with $r\in V(K)$.   
Let $L$ be the member of $\laycompgtr{G}{T}{r}{i+1}$ with $r\in V(K)$.    
Then, 
$V(L) \cap \levelgtr{G}{T}{r}{i+1} = \agtr{G}{T}{\parNei{G}{K}}$ and 
$V(L) \setminus V(K) \setminus \levelgtr{G}{T}{r}{i+1} = \dgtr{G}{T}{\parNei{G}{K}}$.  
\end{theorem}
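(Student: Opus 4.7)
Set $R := \parNei{G}{K}$. The plan is to (1) show $R$ is extreme, (2) establish the distance identity $\setdistgt{G}{T}{R}{x} = \distgt{G}{T}{r}{x} - (i+1)$ on $V(L) \setminus V(K)$, and (3) identify $V(\initialgtr{G}{T}{R})$ with $V(L) \setminus V(K)$ as vertex sets; the two claimed equalities then fall out of the definitions of $\agtr{G}{T}{R}$ and $\dgtr{G}{T}{R}$.

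For (1), take $y_1, y_2 \in R$ and a neighbor $x_1 \in V(K) \cap \levelgtr{G}{T}{r}{i}$ of $y_1$. Theorem~\ref{thm:sebocut}(ii) says $x_1 y_1 \not\in F$; Lemma~\ref{lem:nonallowed2dist} then gives $\distgt{G}{T}{x_1}{y_1} = 1$, while Lemma~\ref{lem:neigh2posi} yields $\distgt{G}{T}{x_1}{y_2} \ge 1$. Concatenating $F$-shortest $x_1$-$y_1$ and $y_1$-$y_2$ paths into a walk and invoking the standard triangle inequality for $F$-distances (routine from Lemma~\ref{lem:minimumjoin}) produces $\distgt{G}{T}{x_1}{y_2} \le 1 + \distgt{G}{T}{y_1}{y_2}$, so $\distgt{G}{T}{y_1}{y_2} \ge 0$; hence $R$ is extreme.

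For (2), let $x \in V(L) \setminus V(K)$ and let $P$ be an $F$-shortest $r$-$x$ path. Since $r \in V(K)$ and $x \not\in V(K)$, $P$ must exit $V(K)$; let $y \in R$ be its first vertex outside $V(K)$. Splitting $P = P_{ry} + P_{yx}$ and using $w_F(P_{ry}) \ge \distgt{G}{T}{r}{y} = i+1$ together with $w_F(P_{yx}) \ge \distgt{G}{T}{y}{x}$ gives $\distgt{G}{T}{y}{x} \le \distgt{G}{T}{r}{x} - (i+1)$, hence $\setdistgt{G}{T}{R}{x} \le \distgt{G}{T}{r}{x} - (i+1)$. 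The triangle inequality applied to any $y' \in R$ supplies the reverse bound, so equality holds. The same triangle inequality also shows that for $x \in V(K)$ at $r$-level $j \le i$ and any $y \in R$, $\distgt{G}{T}{y}{x} \ge (i+1) - j \ge 1$; thus $V(K) \cap \laylegtr{G}{T}{R}{0} = \emptyset$.

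For (3), the distance identity places $V(L) \setminus V(K)$ inside $\laylegtr{G}{T}{R}{0}$, while each connected component of $G[V(L) \setminus V(K)]$ must contain an $R$-vertex (otherwise it would be disconnected from $V(K)$ inside the connected subgraph $V(L)$), giving $V(L) \setminus V(K) \subseteq V(\initialgtr{G}{T}{R})$. Conversely, any $z \in V(\initialgtr{G}{T}{R})$ is joined to $R$ by a path inside $G[\laylegtr{G}{T}{R}{0}]$, which the triangle inequality places inside $G[\laylegtr{G}{T}{r}{i+1}]$; so $z$ lies in the $r$-component $V(L)$, and $z \not\in V(K)$ by step (2). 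Combining $V(\initialgtr{G}{T}{R}) = V(L) \setminus V(K)$ with the distance identity, and noting that $V(K)$ meets no level above $i$, yields the two claimed equalities. The main obstacle is coordinating the two connectivity structures — components of $G[V(L) \setminus V(K)]$ versus components of $G[\laylegtr{G}{T}{R}{0}]$ meeting $R$ — which ultimately reduces to careful applications of the triangle inequality relating $r$-distances and $R$-distances.
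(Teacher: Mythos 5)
There is a genuine gap: essentially every lower bound in your argument is obtained by invoking ``the standard triangle inequality for $F$-distances,'' and that inequality is false. Take $G$ to be the star with centre $a$ and leaves $x,y,z$, and $T=\{a,y\}$; then $F=\{ay\}$ is the unique (hence minimum) join, and $\distgt{G}{T}{x}{y}=\distgt{G}{T}{y}{z}=0$ while $\distgt{G}{T}{x}{z}=2$. The failure mode is exactly what the ``concatenate two shortest paths into a walk'' argument overlooks: the two paths may share an edge of $F$, and a closed walk that reuses an edge of $G$ does not decompose into circuits of $G$, so Lemma~\ref{lem:minimumjoin} cannot be applied to shorten the walk without raising its weight. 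This is not a peripheral issue in this paper: if the triangle inequality were routine, Lemma~\ref{lem:ar} and Theorem~\ref{thm:unit4dist} would be one-line consequences, whereas the paper proves them by a delicate induction. Concretely the gap infects your proof in step (1) that $R=\parNei{G}{K}$ is extreme, the reverse bound $\distgt{G}{T}{R}{x}\ge\distgt{G}{T}{r}{x}-(i+1)$ and the claim $\distgt{G}{T}{y}{x}\ge(i+1)-j$ for $x\in V(K)$ in step (2), and the placement of paths of $G[\laylegtr{G}{T}{R}{0}]$ inside $G[\laylegtr{G}{T}{r}{i+1}]$ in step (3).

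The conclusions you need are all true, but they must come from the lemmas built for this purpose rather than from a metric axiom. Since $\parNei{G}{K}\subseteq V(L)\cap\levelgtr{G}{T}{r}{i+1}$, the extremeness of $\parNei{G}{K}$ and the bound $\distgt{G}{T}{y}{x}\ge 0$ for $y\in\parNei{G}{K}$ and $x\in V(L)\cap\levelgtr{G}{T}{r}{i+1}$ follow from Lemma~\ref{lem:exext}~\ref{item:ext} applied to $L$; the separation of $V(K)$ and of everything outside $V(L)$ from $\laylegtr{G}{T}{R}{0}$ is exactly Lemma~\ref{lem:neigh2posi} (applied at levels $i$ and $i+1$), combined with the observation that a path from $R$ into $K$, respectively out of $L$, must first meet $V(K)\cap\levelgtr{G}{T}{r}{i}$, respectively $\parNei{G}{L}$. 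Your upper bound in step (2) --- splitting an actual $F$-shortest $r$--$x$ path at its exit from $K$ --- is sound and is the same device as the Claim in the paper's proof, though you should cite Theorem~\ref{thm:path2cut} to guarantee that the path crosses $\parcut{G}{K}$ only once and stays inside $V(L)$, which your step (3) needs. With those repairs the architecture of your proof matches the paper's.
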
 
\begin{proof} 
Let $F$ be a minimum join of $(G, T)$. 
We first prove the following claim. 

\begin{pclaim} \label{claim:include} 
Let $x\in V(L)\setminus V(K)$. 
Then, $L$ has a path  between $x$ and a vertex in $\parNei{G}{K}$ with nonpositive $F$-weight whose vertices are contained in $V(L)\setminus V(K)$.  
This path can be of negative $F$-weight if and only if $\distgt{G}{T}{r}{x} \le i$. 

\end{pclaim} 
\begin{proof} 
Let $P$ be an $F$-shortest path in $(G, T)$ between $r$ and $x$. Note $w_F(P)  \le i+1$. 
Theorem~\ref{thm:path2cut} implies $V(P)\subseteq V(L)$ and $|\parcut{G}{K}\cap E(P) | = 1$. 
Let $e\in \parcut{G}{K}\cap E(P)$, and let $s \in \partial_G(e) \setminus V(K)$. 
Because $s \in \levelgtr{G}{T}{r}{i+1}$ holds from Theorem~\ref{thm:neigh2dist}, we have $w_F(rPs) \ge i+1$. 
Hence,  $w_F(sPx) \le 0$.  
Thus, $sPx$ is a desired path. 
If $x$ further satisfies $x\not\in \levelgtr{G}{T}{r}{i+1}$, then $w_F(P) < i+1$ holds, and $w_F(sPx) < 0$ follows.  
In contrast,  
if $x$ is contained in $\levelgtr{G}{T}{r}{i+1}$, 
then Lemma~\ref{lem:inext} implies $w_F(sPx) \ge 0$, and $w_F(sPx) = 0$ follows. 
This proves the claim. 
\end{proof}

Claim~\ref{claim:include} implies that 
$V(L) \cap \levelgtr{G}{T}{r}{i+1}$ is a subset of $\agtr{G}{T}{\parNei{G}{K}}$, and 
$V(L)\setminus V(K) \setminus \levelgtr{G}{T}{r}{i+1}$ is a subset of $\dgtr{G}{T}{\parNei{G}{K}}$.

By contrast, Lemma~\ref{lem:neigh2posi} implies 
$\agtr{G}{T}{\parNei{G}{K}} \cup \dgtr{G}{T}{\parNei{G}{K}} \subseteq V(L)$. 
Lemma~\ref{lem:neigh2posi} also implies that 
$K$ is disjoint from $\agtr{G}{T}{\parNei{G}{K}}$ and $\dgtr{G}{T}{\parNei{G}{K}}$. 
Hence, $\agtr{G}{T}{\parNei{G}{K}} \cup \dgtr{G}{T}{\parNei{G}{K}} \subseteq V(L) \setminus V(K)$ is implied.    

Therefore, the lemma is proved. 
\end{proof}

Lemma~\ref{lem:hsetar2partition} and Theorem~\ref{thm:capital} easily imply Theorem~\ref{thm:capital}.

\begin{theorem} \label{thm:capital} 
Let $(G, T)$ be a bipartite graft, and let $r\in V(G)$.  
Let $i$ be an index with $i > \min_{x\in V(G)} \distgt{G}{T}{r}{x}$, 
and let $K$ be the member of $\laycompgtr{G}{T}{r}{i-1}$ with $r\in V(K)$.   
Then, there exist $S_1, \ldots, S_k\in \gtpart{G}{T}$, where $k\ge 1$, such that 
$V(L) \cap \levelgtr{G}{T}{r}{i} = S_1 \dot\cup \cdots \dot\cup S_k$.    
Furthermore, 
$V(L) \setminus V(K) \setminus \levelgtr{G}{T}{r}{i} = \coupgt{G}{T}{S_1} \dot\cup \cdots \dot\cup  \coupgt{G}{T}{S_k}$ and 
$\conn{L - V(K) -  \levelgtr{G}{T}{r}{i}} = \conng{G}{\coupgt{G}{T}{S_1}} \dot\cup \cdots \dot\cup \conng{G}{\coupgt{G}{T}{S_k}}$.  
\end{theorem}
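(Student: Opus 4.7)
The plan is to derive Theorem~\ref{thm:capital} by combining Theorem~\ref{thm:hinitial} with Lemma~\ref{lem:hsetar2partition}, as the sentence immediately preceding the statement suggests (the reference there to ``Theorem~\ref{thm:capital}'' is evidently a typo for ``Theorem~\ref{thm:hinitial}'', and the undefined $L$ in the statement should be the member of $\laycompgtr{G}{T}{r}{i}$ containing $r$). Theorem~\ref{thm:hinitial} describes how the capital component at level $j+1$ is built on top of the one at level $j$ via the neighborhood $\parNei{G}{K}$; applied with $j = i-1$ (which is legitimate because the existence of $L$ at level $i$ forces $i-1 < \max_{x} \distgt{G}{T}{r}{x}$), it provides the identifications
\[
V(L) \cap \levelgtr{G}{T}{r}{i} = \agtr{G}{T}{\parNei{G}{K}}, \qquad V(L) \setminus V(K) \setminus \levelgtr{G}{T}{r}{i} = \dgtr{G}{T}{\parNei{G}{K}}.
\]
Once this reformulation is in place, the desired $\gtpart{G}{T}$-partition will be produced by feeding $X := \parNei{G}{K}$ into Lemma~\ref{lem:hsetar2partition}.

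To apply that lemma, I need $X$ to be extreme and contained in a single color class. For extremity, every $y \in \parNei{G}{K}$ satisfies $\distgt{G}{T}{r}{y} = i$: its distance differs by $\pm 1$ from that of a neighbor $x \in V(K)$ by Theorem~\ref{thm:neigh2dist}, and $y \notin V(K)$ rules out $\distgt{G}{T}{r}{y} \le i-1$ (such a $y$ would lie in the same component of $G[\laylegtr{G}{T}{r}{i-1}]$ as $x$, namely $K$). Hence $\parNei{G}{K} \subseteq V(L)\cap \levelgtr{G}{T}{r}{i}$, and the latter is extreme by Lemma~\ref{lem:exext}~\ref{item:ext}, with extremity inherited by subsets. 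For the color-class condition, since $G$ is bipartite all paths between two fixed vertices have edge-count of a fixed parity, and the $F$-weight of any path has the same parity as its edge count; hence $F$-distances in $(G, T)$ have parities dictated by the color classes, so the vertices of $\parNei{G}{K}$, all lying at the common distance $i$ from $r$, belong to a single color class.

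With $X = \parNei{G}{K}$ thus verified to meet the hypotheses, Lemma~\ref{lem:hsetar2partition} yields classes $S_1, \dots, S_k \in \gtpart{G}{T}$ satisfying $\agtr{G}{T}{\parNei{G}{K}} = S_1 \dot\cup \cdots \dot\cup S_k$, $\dgtr{G}{T}{\parNei{G}{K}} = \coupgt{G}{T}{S_1} \dot\cup \cdots \dot\cup \coupgt{G}{T}{S_k}$, and the corresponding refinement of $\conng{G}{\dgtr{G}{T}{\parNei{G}{K}}}$; substituting via the two identifications borrowed from Theorem~\ref{thm:hinitial} then delivers the three claims of the theorem. The main obstacle, and the only step that is not immediate bookkeeping, is this two-part verification that $\parNei{G}{K}$ is extreme and monochromatic; of the two, the color-class part is the one place where the bipartite structure must be invoked explicitly via a parity argument on $F$-weights, rather than by citing a previously proved lemma.
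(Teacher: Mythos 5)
Your proof is correct and follows exactly the route the paper intends (the reference ``Lemma~\ref{lem:hsetar2partition} and Theorem~\ref{thm:capital} easily imply Theorem~\ref{thm:capital}'' is indeed a typo for ``Theorem~\ref{thm:hinitial}''). Your two verifications that $\parNei{G}{K}$ is extreme (as a subset of the extreme set $V(L)\cap\levelgtr{G}{T}{r}{i}$, via Lemma~\ref{lem:exext}) and monochromatic (via the parity of $F$-weights on paths from $r$) are precisely the details the paper leaves implicit when applying Lemma~\ref{lem:hsetar2partition} to $X=\parNei{G}{K}$.
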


\begin{ac} 
This study was supported by JSPS KAKENHI Grant Number 18K13451. 
\end{ac}

\bibliographystyle{splncs03.bst}
\bibliography{tbarrier.bib}

\begin{thebibliography}{1}
\providecommand{\url}[1]{\texttt{#1}}
\providecommand{\urlprefix}{URL }

\bibitem{kita2017parity}
Kita, N.: Parity factors {I}: General {K}otzig-{L}ov\'asz decomposition for
  grafts. arXiv preprint arXiv:1712.01920  (2017)

\bibitem{kita2021bipartite}
Kita, N.: Bipartite graft {III}: General case. arXiv preprint arXiv:2108.00245
  (2021)

\bibitem{DBLP:journals/jct/Sebo90}
Seb{\"{o}}, A.: Undirected distances and the postman-structure of graphs. J.
  Comb. Theory, Ser. {B}  49(1),  10--39 (1990)

\end{thebibliography}

\end{document}